\tikzset{
    >=stealth,
    every picture/.style={thick},
    graphs/every graph/.style={empty nodes},
}
\tikzstyle{vertex}=[
\tikzstyle{printersafe}=[decoration={snake,amplitude=0pt}]
\newcommand{\qq}{\mathbb{Q}}
\newcommand{\zz}{\mathbb{Z}}
\newcommand{\rr}{\mathbb{R}}
\newcommand{\kk}{\mathbb{K}}
\def\O#1.{\mathcal {O}_{#1}}			
\def\pr #1.{\mathbb P^{#1}}				
\def\af #1.{\mathbb A^{#1}}			
\def\ses#1.#2.#3.{0\to #1\to #2\to #3 \to 0}	
\def\xrar#1.{\xrightarrow{#1}}			
\def\K#1.{K_{#1}}						
\def\bA#1.{\mathbf{A}_{#1}}			
\def\bM#1.{\mathbf{M}_{#1}}				
\def\bL#1.{\mathbf{L}_{#1}}				
\def\bB#1.{\mathbf{B}_{#1}}				
\def\bK#1.{\mathbf{K}_{#1}}			
\def\subs#1.{_{#1}}					
\def\sups#1.{^{#1}}
\DeclareMathOperator{\mld}{mld}
  \newtheorem{introthm}{Theorem}
  \newtheorem{introconj}{Conjecture}
  \newtheorem{theorem}{Theorem}[section]
  \newtheorem{lemma}[theorem]{Lemma}
  \newtheorem{conj}[theorem]{Conjecture}
  \newtheorem{proposition}[theorem]{Proposition}
  \newtheorem{notation}[theorem]{Notation}
  \newtheorem{definition}[theorem]{Definition}
  \newtheorem{example}[theorem]{Example}
\newtheorem{remark}[theorem]{Remark}
\theoremstyle{remark}
\numberwithin{equation}{section}
\begin{document}

\title[On termination and fundamental groups]
{On termination of flips and fundamental groups}

\author[J.~Moraga]{Joaqu\'in Moraga}
\address{Department of Mathematics, Princeton University, Fine Hall, Washington Road, Princeton, NJ 08544-1000, USA
}
\email{jmoraga@princeton.edu}

\subjclass[2010]{Primary 14E30, 
Secondary 14M25.}
\maketitle

\begin{abstract}
In this article, we propose a boundedness conjecture for the
regional fundamental group of klt singularities. 
We prove that this boundedness conjecture, the Zariski closedness of the diminished base locus of $K_X$, and an upper bound for minimal log discrepancies, 
imply the termination of flips with scaling.
We prove the boundedness conjecture 
in the case of toric singularities, quotient singularites, isolated 
$3$-fold singularities, and 
exceptional singularities.
\end{abstract}

\setcounter{tocdepth}{1} 
\tableofcontents

\setcounter{tocdepth}{1}
\tableofcontents

\section{Introduction}

The study of singularities through
the lens of topology date back to Mumford, Milnor, and Durfee~\cite{Mum61,Mil68,Dur83}.
In~\cite{Mum61}, Mumford established that the smoothness
of a surface singularity can be characterized 
by its fundamental group.
This property fails for higher dimensional singularities.
Moreover, any finitely presented group can 
be achieved as the local fundamental group
of an algebraic singularity~\cite{Kol11,Kol13b,KK14}.
Although this result seems to indicate that the study of the topology of algebraic singularities is hopeless,
many positive results arise when we specialize
to the right class of singularities.
For birational geometers, klt singularities are such category of singularities.
On one hand, this class is wide enough so that the minimal model smooth varieties have klt singularities.
On the other hand, klt singularities are mild enough so that most vanishing theorems for smooth varieties 
still hold for varieties with klt singularities.
This makes the understanding of the geometry and topology of klt singularities a central topic in birational geometry.

The development of the minimal model program
has been intertwined with a better understanding
of klt singularities. 
There have been some recent developments 
on the topology
of the singularities of the MMP.
We summarize some of these results.
In~\cite{Kol11,Kol13}, Koll\'ar asks whether the local fundamental group of a klt singularity is finite.
In~\cite{Xu14}, Xu proved that the \'etale fundamental group of a klt singularity is finite.
This statement is also proved by Greb, Kebekus, and Peternell~\cite{GKP16} partially using the argument in~\cite{Xu14}.
In~\cite{TX17}, Tian and Xu reduce the previous conjecture
to the finiteness of the fundamental group
of the smooth locus of a Fano type variety.
This conjecture was answered positively by Braun in~\cite{Bra20}.
More generally, in~\cite{Bra20}, Braun proves the finiteness of the {\it regional fundamental group} of the singularity, i.e., the fundamental group of the smooth locus of an analytic neighborhood of the singularity.
Afterward in~\cite{BFMS20}, Braun, Filipazzi, Svaldi, and the author proved that the fundamental group of a klt singularity
satisfies the Jordan property, i.e., it
is almost a finite abelian group of rank at most the dimension of the germ.
The main theorem of~\cite{BFMS20} says that the regional fundamental groups of $n$-dimensional klt singularities are 
the same as the regional fundamental groups
of $n$-dimensional quotient singularities.
In~\cite{XZ20}, Xu and Zhuang proved that the inverse of the normalized volume of a klt singularity gives an upper bound for the order of its regional fundamental group.
In~\cite{BM21}, Braun and the author showed a connection between the Cox ring of Fano type varieties and the fundamental group of klt singularities.
In~\cite{Mor21a}, the author proves that the isomorphism in~\cite{BFMS20} can be realized geometrically by performing certain blow-up at the singularity. 
This generalized the main theorem for toric singularities in~\cite{Mor20c}.
Many of the ideas of~\cite{Mor21a} build into previous work on the dual complex of singularities~\cite{KX16,dFKX17,Bir20,FS20}.

On a different front, there has been some progress on the understanding of minimal log discrepancies; an important invariant of singularities.
The minimal log discrepancies of exceptional singularities are described in the works~\cite{Mor18b,HLS19}.
Exceptional singularities are higher-dimensional analogs of the $E_6,E_7$, and $E_8$ singularities.
By~\cite{Mor18c,HLM20}, these singularities are deformation of cones over exceptional Fano varieties.
Recently, in~\cite{Mor21b}, the author showed the existence of an upper bound for the minimal log discrepancy of klt singularities of regularity one.
We expect the techniques of~\cite{Mor21b} to allow us to understand minimal log discrepancies in a more general setting.

The aim of this article 
is to draw a connection between the topology of the singularity and its minimal log discrepancy.
Shokurov conjectured that klt singularities with the same mld have similar indices of their canonical divisor.
This is known as the index conjecture.
This conjecture is proved by Ambro in the case of toric singularities in~\cite{Amb09}.
The index conjecture and the result in~\cite{Mor21a} suggest the existence of a deeper connection between minimal log discrepancies and the topology of the singularity.
We propose the following conjecture, which asserts the subsequent. Once we control the minimal log discrepancy away from zero and the number of divisorial valuations hitting near the mld, we also control the regional fundamental group.

\begin{introconj}\label{conj:boundedness}
Let $n$ and $N$ be two positive integers.
Let $\epsilon$ and $\delta\in (0,1)$ be two positive real numbers.
There exists a constant $\rho:=\rho(n,N,\epsilon,\delta)$,
only depending on the variables $n,N,\epsilon$ and $\delta$,
satisfying the following.
Let $(X,\Delta;x)$ be a $n$-dimensional klt singularity such that:
\begin{enumerate}
    \item ${\rm mld}(X,\Delta;x)>\epsilon$, and 
    \item there are at most $N$ prime divisors $E$ over $X$ with $c_X(E)=x$ for which 
    \[
    a_E(X,\Delta;x) \in [{\rm mld}(X,\Delta;x),{\rm mld}(X,\Delta;x)+\delta).
    \]
\end{enumerate}
Then, $|\pi_1^{\rm reg}(X,\Delta;x)|\leq \rho$.
\end{introconj}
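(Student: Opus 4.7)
The plan is to combine the geometric realization of the regional fundamental group via a plt-blowup, due to \cite{Mor21a}, with a boundedness argument for the resulting Fano-type exceptional divisor. By \cite{Mor21a} one can choose a plt-blowup $\pi\colon Y\to X$ extracting a single prime divisor $E$ over $x$ such that $(Y,\Delta_Y+E)$ is plt, $E$ is a Fano-type variety, and $\pi_1^{\rm reg}(X,\Delta;x)$ is isomorphic to the orbifold fundamental group of the smooth locus of the klt log Fano pair $(E,\mathrm{Diff}_E(\Delta_Y))$. This converts the local question for the germ into a global question for a log Fano pair.

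Next I would translate the hypotheses across this reduction. The bound $\mathrm{mld}(X,\Delta;x)>\epsilon$ implies that the discrepancies of divisors over $Y$ distinct from $E$ are bounded below, so $(E,\mathrm{Diff}_E(\Delta_Y))$ is $\epsilon'$-lc for some $\epsilon'=\epsilon'(\epsilon,n)>0$. Hypothesis~(2) should force $\mathrm{Diff}_E(\Delta_Y)$ to have only boundedly many irreducible components with coefficients in a finite, controlled set, since the components with coefficient far from $1$ correspond under inversion of adjunction to divisors over $x$ with log discrepancy close to $\mathrm{mld}(X,\Delta;x)$. With $(E,\mathrm{Diff}_E(\Delta_Y))$ an $\epsilon'$-lc log Fano pair of bounded complexity, Birkar's boundedness theorems for Fano varieties place all such pairs in a single log-bounded family.

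The conclusion then follows from \cite{Bra20}: the order of the orbifold fundamental group of the smooth locus of a member of any bounded family of $\epsilon'$-lc log Fano pairs is uniformly bounded, and combining with the first step produces the desired constant $\rho(n,N,\epsilon,\delta)$.

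The main obstacle is the middle step: extracting effective control of the coefficients and the number of components of $\mathrm{Diff}_E(\Delta_Y)$ from the spectral hypothesis~(2). In the structured classes handled in this paper --- toric, quotient, isolated $3$-fold, and exceptional singularities --- there are direct combinatorial or structural descriptions that make this translation tractable, and one can even avoid passing through an abstract boundedness statement. In general, however, one needs a refined version of Shokurov's index conjecture connecting the log discrepancy spectrum of the germ to the orbifold data on a Kollár component, and it is this bridge between the numerical invariant appearing in~(2) and the geometry of the plt-blowup that currently keeps the full conjecture out of reach.
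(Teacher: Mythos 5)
This statement is Conjecture~\ref{conj:boundedness}, which the paper does not prove in general; Theorem~\ref{introthm:boundedness} establishes it only for toric, quotient, isolated three-dimensional, and exceptional singularities, and the general case is left open. Your proposal does not close that gap either, and you candidly flag in the last paragraph exactly where it fails: converting the spectral hypothesis~(2) into effective control of the number and coefficients of the components of $\mathrm{Diff}_E(\Delta_Y)$ on a Koll\'ar component. No such correspondence between divisors over $x$ with log discrepancy near $\mathrm{mld}(X,\Delta;x)$ and the different's combinatorics is available in the paper or the surrounding literature, so this is a genuine missing step, not merely a technical wrinkle.

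Even setting that aside, two further points deserve attention. First, the reduction in your opening paragraph is stated too strongly: what \cite{Mor21a} and Theorem~\ref{thm:reg-fundamental} give is an extension of a finite abelian group of bounded rank by a group of order bounded by $c(n)$, not an isomorphism of $\pi_1^{\rm reg}(X,\Delta;x)$ with the orbifold fundamental group of $(E,\mathrm{Diff}_E(\Delta_Y))$; and for a uniform bound over a bounded family of $\epsilon'$-lc log Fano pairs you would want to invoke \cite{XZ20} (via the normalized volume) rather than \cite{Bra20}, which yields finiteness but not uniformity across the family. Second, your framework is essentially the one the paper runs for exceptional singularities in Proposition~\ref{prop:ex-case}, where the unique Koll\'ar component is itself an exceptional Fano variety and Birkar's boundedness gives the controlled family directly. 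The three remaining cases, however, are proved by entirely different devices: a compactness argument on the real torus (Lawrence's theorem) together with a blow-up construction for toric singularities; a reduction to the toric case via the cyclic stabilizer of the mld-computing valuation for quotient singularities; and the local Mori-dream-space property combined with Shokurov's bound on Cartier indices for isolated three-folds. So beyond its acknowledged gap, the proposal also does not reconstruct the arguments the paper actually uses to treat the accessible cases of the conjecture.
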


We will call the previous conjecture; 
the boundedness conjecture of the regional fundamental group.
In Section~\ref{sec:ex}, we give examples that show 
that all the hypotheses of Conjecture~\ref{conj:boundedness} are necessary
to bound the regional fundamental group.
Our first theorem is a positive answer for the
boundedness conjecture in the setting of quotient singularities,
toric singularities, 
isolated $3$-fold singularities,
and exceptional singularities.
These classes of singularities 
are natural from the perspective
of the minimal model program.

\begin{introthm}\label{introthm:boundedness}
The boundedness conjecture of the regional fundamental group,
Conjecture~\ref{conj:boundedness}, holds for the following clases of singularities: 
\begin{enumerate}
    \item Quotient singularities, 
    \item toric singularities, 
    \item isolated $3$-fold singularities whenever $\delta>1-\epsilon$, and
    \item exceptional singularities.
\end{enumerate}
\end{introthm}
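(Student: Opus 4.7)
The plan is to handle each of the four classes by different techniques, exploiting structural features particular to each. The toric case is the most direct: writing the singularity at $x$ as the spectrum of a semigroup algebra associated to a cone $\sigma \subset N_\mathbb{R}$, the regional fundamental group is the finite quotient of $N$ by the sublattice generated by the primitive ray generators of $\sigma$. Divisorial valuations over $x$ correspond to primitive lattice points $w$ in the interior of $\sigma$, and the log discrepancy of the valuation at $w$ equals $\psi_{K_X+\Delta}(w)$ for the piecewise-linear support function of $K_X+\Delta$. Condition (1) forces $\psi(w) > \epsilon$ for every such $w$, while condition (2) bounds by $N$ the number of $w$ with $\psi(w) \in [{\rm mld}, {\rm mld}+\delta)$. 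A lattice-point count in $\sigma$ then bounds its covolume, which is precisely the order of the regional fundamental group. The quotient case reduces to the toric one via the Jordan-type structure of \cite{BFMS20}: after passing to a finite-index abelian subgroup of bounded index in $\pi_1^{\rm reg}$, the cover is a toric singularity and one checks that the hypotheses of the conjecture transfer with bounded loss.

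For the exceptional case, I would use the structure theorem from \cite{Mor18c, HLM20}, which presents such a singularity as a $\mathbb{Q}$-Gorenstein degeneration of a cone over an exceptional Fano pair $(V, \Delta_V)$ of one dimension lower. Exceptional Fano pairs with an $\epsilon$-klt bound form a bounded family by Birkar's boundedness theorem, hence their orbifold fundamental groups have uniformly bounded order. The regional fundamental group of the cone is a quotient of this orbifold fundamental group by the cyclic action coming from the polarization, and is therefore also bounded; the degeneration step preserves the bound because the fundamental group is invariant under $\mathbb{Q}$-Gorenstein deformation for klt singularities.

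For the isolated $3$-fold case, the hypothesis $\delta > 1 - \epsilon$ is essential: combined with (1) it implies that every divisor counted by (2) has log discrepancy strictly less than $1$. Thus (2) bounds by $N$ the number of non-canonical exceptional divisors over $x$. Running a relative MMP yields a $\mathbb{Q}$-factorial terminalization $\pi\colon Y \to X$ whose exceptional divisors are exactly these $N$ divisors; since the singularity at $x$ is isolated, $\pi^{-1}(x)$ is a connected union of at most $N$ surfaces whose combinatorics is controlled by $\epsilon$ and $N$. The inclusion $X^{\rm reg} \hookrightarrow Y \setminus {\rm Sing}(Y)$ induces a surjection on fundamental groups, and one expresses $\pi_1^{\rm reg}(X,\Delta;x)$ as a quotient of the fundamental group of a tubular neighborhood of the exceptional fiber. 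The main obstacle, I expect, will be turning the combinatorial bound on the exceptional locus of $Y$ into a bound on this fundamental group: here the geometric realization results of \cite{Mor21a} and the theory of dual complexes of $3$-fold terminalizations should be the crucial input.
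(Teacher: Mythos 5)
Your four-case split matches the paper's structure, and your quotient-case sketch (Jordan-type reduction to the toric case) is in the same spirit as the paper's Proposition~\ref{prop:quot-case}, which uses \cite{Mor20c} to find a normal abelian subgroup of bounded index whose associated cover is toric. But the other three cases have genuine gaps. In the toric case, "a lattice-point count then bounds its covolume" is not an argument: the entire difficulty is to show that a large quotient $N_{\zz,\Delta}/\langle\sigma(1)\rangle$ \emph{forces} many lattice points into the critical interval $[m,m+\delta)$. The paper proves this by contradiction, using Lawrence's theorem on closed subgroups of the real torus (Theorem~\ref{thm:real-torus} and Lemma~\ref{lem:closed-subgroup}) to extract a limiting subgroup $S$ and then manufacture arbitrarily many nearby lattice points; no elementary counting substitutes for this. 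You also omit the non-simplicial case entirely, which the paper handles in Steps 3--4 of Proposition~\ref{prop:toric-case} via a special toric blow-up to reduce to the $\qq$-factorial case.

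For exceptional singularities, the paper does not use $\mathbb Q$-Gorenstein degeneration; it works directly with the birational model extracting the unique lc place $E$ and bounds the ramification index of $E'$ over $E$ by intersecting with a bounded-degree curve, which is where $\epsilon$-kltness enters. Your version inverts the relevant exact sequence ($\pi_1^{\rm reg}$ of a cone is an \emph{extension} of the orbifold $\pi_1$ of the base Fano by a cyclic group determined by the polarization, not a quotient of it), and sidesteps bounding that cyclic kernel, which is the hard step; the asserted deformation invariance of $\pi_1^{\rm reg}$ under $\mathbb Q$-Gorenstein deformation is an additional unproved and delicate claim. For isolated $3$-folds, your observation that $\delta>1-\epsilon$ controls the non-canonical divisors is correct, but your terminalization/dual-complex strategy is different from the paper's and you acknowledge the missing step yourself. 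The paper instead passes to a small $\qq$-factorialization, applies Theorem~\ref{thm:reg-fundamental} to reduce to bounding a free abelian part of rank at most $3$, and then bounds the generators' orders by the Cartier index of $\qq$-Cartier Weil divisors via the Mori dream space structure of \cite{BM21} and Shokurov's index bound \cite[Lemma 4.4.1]{Sho96}. This last input---turning the bound on nearby divisorial valuations into a bound on local Cartier indices---is exactly the idea your sketch is missing.
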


The importance of the minimal log discrepancy relies on its connection
to the termination of flips.
In~\cite{Sho04}, Shokurov proposed a conjecture regarding the ascending chain condition for minimal log discrepancies. 
This conjecture is known as the ascending chain condition for mld's.
We abbreviate it as ACC for mld's.
In~\cite{Amb99}, Ambro conjectured that minimal log discrepancies are lower semicontinuous.
This conjecture is known as lower semicontinuity for mld's.
Abbreviated as LSC for mld's.
These two conjectures imply the termination of flips~\cite{Sho04}.
Many theorems on termination of flips reflect this philosophy~\cite{Bir07,Mor18a,HM20}.
The approach is often the same: 
We use some gadgets to measure the singularities of the sequence of flips.
This gadget is either an effective divisor or a moduli divisor.
We use adjunction theory to prove that the MMP eventually terminates along the most singular loci, i.e., the loci along which the threshold is attained.
Adjunction plays the role of lower semicontinuity~\cite{Hac14}. 
Then, we use an ascending chain condition for thresholds to prove that 
the sequence must terminate.
The ACC for thresholds plays the role of the ACC for minimal log discrepancies~\cite{HMX14,BZ16}.
Both conjectures, the ACC and LSC, independently imply the
existence of an upper bound for the minimal log discrepancy
of $n$-dimesional singularities.

\begin{introconj}\label{conj:upp-mld}
Let $n$ be a positive integer. 
There exists a constant $A:=A(n)$,
only depending on $n$, 
that satisfies the following.
Let $(X;x)$ be a $n$-dimensional klt singularity.
Then, there exists a prime divisor $E$ over $X$ with center $x\in X$ for which
$a_E(X;x)<A$.
\end{introconj}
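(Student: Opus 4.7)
The target is to produce, for each $n$, a constant $A(n)$ such that every $n$-dimensional klt singularity $(X;x)$ admits a divisor $E$ over $X$ with $c_X(E) = x$ and $a_E(X;x) < A(n)$. Since the infimum in the definition of $\mld$ is attained for klt pairs, this is equivalent to a uniform upper bound on $\mld(X;x)$. Smooth points realize $\mld = n$, so the natural sharp conjecture is $A(n) = n$ (with equality characterizing smoothness).

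My plan is induction on the regularity of $(X;x)$, using~\cite{Mor21b} as the base case. Given $(X;x)$, take a $\qq$-factorial dlt modification $\pi \colon Y \to X$ extracting exactly the divisors $E$ with $a_E(X;x) \le 1$, and let $\Sigma = \operatorname{Exc}(\pi)$. Choose a minimal stratum $Z$ of the dual complex $\mathcal{D}(Y, \Sigma)$ and apply adjunction along $Z$ to produce a klt pair $(Z, \Delta_Z)$ of strictly smaller regularity. By the inductive hypothesis, $(Z, \Delta_Z; z)$ admits a divisor $F$ over $Z$ with $a_F(Z, \Delta_Z; z) < A(\dim Z)$ at a suitable point $z \in Z \cap \pi^{-1}(x)$. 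One would then lift $F$ to a divisor $E$ over $Y$ (and hence over $X$) by a refinement of the plt blow-up construction of~\cite{Mor21a}, producing a divisor with center $x$ whose log discrepancy is controlled in terms of $a_F(Z,\Delta_Z;z)$ and the adjunction data.

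The main obstacle is the lifting step. Adjunction preserves log discrepancies in a controlled way for divisors whose centers lie inside the stratum $Z$, but lifting $F$ to $Y$ requires constructing a divisor over $Y$ with prescribed log discrepancy whose center specializes to $x$; the ensuing log discrepancy over $X$ can grow by adjunction coefficients whose uniform control is tantamount to combining the boundedness of Fano type varieties~\cite{Bir21} with an additional Jordan-type combinatorial constraint on $\mathcal{D}(Y,\Sigma)$ in the spirit of~\cite{BFMS20}. A complementary strategy would use the normalized volume minimizer and its K-semistable degeneration to a cone, then extract a divisor of bounded log discrepancy from the Reeb valuation on the semistable limit. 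Either route appears to require a substantially new geometric input beyond the current technology, which is why the conjecture is still open past regularity one and enters the hypotheses of the termination results in this paper as an assumption rather than as a theorem.
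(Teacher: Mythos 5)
The statement you were asked to prove is Conjecture~\ref{conj:upp-mld}, which the paper states as an \emph{open conjecture} and uses only as a hypothesis in Theorem~\ref{thm:term}; the paper contains no proof of it, so there is nothing to compare your attempt against. You in fact recognize this yourself in your last sentence, and your write-up is an honest research sketch rather than a proof. For what it is worth, the induction-on-regularity programme you outline does track the expectation the author records in the introduction (namely that the methods of the regularity-one bound in~\cite{Mor21b} should extend), and the obstacle you isolate --- controlling how a divisor on the minimal stratum lifts to a divisor over $Y$ with bounded log discrepancy and center specializing to $x$ --- is indeed the genuine open point. One small structural remark: your first step (dlt modification extracting all divisors with $a_E\le 1$) degenerates when $(X;x)$ is terminal, e.g.\ smooth, since then $\Sigma=\emptyset$ and there is no stratum $Z$ to run adjunction on; any workable version of the induction has to enter through the notion of regularity of the klt germ as defined in the paper (via auxiliary boundaries $B'\ge 0$ making the pair strictly lc), not through the exceptional divisors of the trivial-boundary pair itself. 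In short: no gap relative to the paper, because the paper proves nothing here; the gap you name in your own sketch is real and is exactly why this remains a conjecture.
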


On the other hand, when we run a minimal model program for a pseudo-effective pair $(X,\Delta)$, 
the locus that we flip or contract is always contained in the diminished base locus of $K_X+\Delta$.
Hence, if the diminished base locus of $K_X+\Delta$ is not a Zariski closed subset of $X$, 
then the MMP will not terminate.
Thus, the following conjecture is natural and standard in the minimal model program.

\begin{introconj}\label{conj:dim-base}
Let $(X,\Delta)$ be a klt pair.
Then, the diminished base locus
${\rm Bs}_{-}(K_X+\Delta)$ is Zariski closed.
\end{introconj}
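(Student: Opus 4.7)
The plan is to first reduce to the case where $K_X + \Delta$ is pseudo-effective, since otherwise ${\rm Bs}_-(K_X + \Delta) = X$ and the statement is trivial. Assuming pseudo-effectivity, I would pass to a log resolution $f \colon Y \to X$ and use Nakayama's divisorial Zariski decomposition $f^*(K_X + \Delta) = P_\sigma + N_\sigma$, where $N_\sigma$ is an effective $\mathbb{R}$-divisor supported on finitely many prime divisors and $P_\sigma$ is movable. By Nakayama's results the diminished base locus decomposes as ${\rm Bs}_-(f^*(K_X + \Delta)) = \Supp(N_\sigma) \cup {\rm Bs}_-(P_\sigma)$, and the statement on $X$ follows from the corresponding statement on $Y$ after taking the image. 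The support of $N_\sigma$ is already a finite union of prime divisors, so the real content is the closedness of ${\rm Bs}_-(P_\sigma)$.

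For this I would try to run a $(K_Y + \Delta_Y)$-MMP with scaling of an ample divisor, which is expected to contract precisely the loci in the negative part of the decomposition. If this MMP terminates with a model $(Z, \Delta_Z)$ on which $K_Z + \Delta_Z$ is nef, then by log abundance, $K_Z + \Delta_Z$ is semiample, so its diminished base locus is empty; pulling back and pushing forward then identifies ${\rm Bs}_-(K_X + \Delta)$ with the image of the exceptional locus of the resulting birational map, which is Zariski closed. Without abundance, one would need a weaker input ensuring that nef adjoint divisors have Zariski closed diminished base locus, for instance via a log canonical bundle formula combined with non-vanishing.

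An unconditional line of attack would try to exploit ACC for log canonical thresholds. Nakayama shows that ${\rm Bs}_-(D)$ is always a countable union of Zariski closed subsets $Z_i$. The idea is that, in the klt adjoint setting, only finitely many such $Z_i$ can actually appear: for an irreducible component $Z \subseteq {\rm Bs}_-(K_X + \Delta)$, one can use adjunction on a dlt modification to produce a klt pair structure on $Z$, and then bound from below the log canonical threshold of an auxiliary effective divisor certifying $Z \subseteq {\rm Bs}_-$. ACC then constrains the possible coefficients and forces the countable union to be finite.

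The hard part will be handling the movable piece $P_\sigma$. The conjecture is widely expected to be of essentially the same strength as the combination of termination of flips and log abundance in dimension at least four, both of which are deep open problems. Any proof would therefore either need to establish substantial cases of the MMP in the required generality, or find a genuinely new route to closedness that avoids running flips, perhaps by using the local structure of divisorial valuations on klt singularities, in the spirit of the results on regional fundamental groups cited in the introduction.
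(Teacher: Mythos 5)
This statement is \emph{Conjecture}~\ref{conj:dim-base} in the paper, not a theorem: the paper offers no proof of it, and instead adopts it as a standing hypothesis in Theorem~\ref{thm:term}. There is therefore no ``paper's own proof'' for your attempt to be compared against. The only result the paper records in this direction is that the generalized version, Conjecture~\ref{conj:dim-base-gen}, is known in dimension at most $4$, as a consequence of the existence of minimal models for pseudo-effective generalized pairs in that range~(\cite[Theorem 1]{Mor18a}); no argument in higher dimension is given or claimed.

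Concerning the content of your sketch: the reductions you describe are essentially circular in the context of this article. You propose to prove closedness of ${\rm Bs}_-(K_X+\Delta)$ by running a $(K_Y+\Delta_Y)$-MMP with scaling to a nef model and then invoking log abundance, or alternatively by an ACC argument. But the whole point of the paper is to \emph{deduce} termination of flips with scaling (Theorem~\ref{thm:term}) from Conjecture~\ref{conj:dim-base} together with Conjectures~\ref{conj:boundedness} and~\ref{conj:upp-mld}; assuming termination plus abundance to prove Conjecture~\ref{conj:dim-base} inverts that logic. Your first two paragraphs are a reasonable description of \emph{why} the conjecture is plausible and of what is known (the Nakayama decomposition, the fact that ${\rm Bs}_-$ is a countable union of closed subsets, the low-dimensional case), and your final paragraph correctly identifies that the conjecture is expected to be of comparable depth to termination and abundance. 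But as a ``proof,'' the sketch does not close the gap; it relocates it. If you want to contribute something usable here, the productive direction would be the one the paper itself pursues for the other conjectures: verify the statement in restricted settings (toric, low Picard rank, bounded families) rather than attempt the general case via MMP machinery that is itself the target.
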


Our second theorem 
states that the boundedness of the regional fundamental group,
the upper bound for minimal log discrepancies,
and the Zariski closedness of the diminished base locus, imply the termination of flips with scaling.

\begin{introthm}\label{thm:term}
Assume that the following statements hold:
\begin{enumerate}
    \item The boundedness of the regional fundamental group, Conjecture~\ref{conj:boundedness}, holds in dimension $n$
    \item the upper bound for the minimal log discrepancy, Conjecture~\ref{conj:upp-mld}, holds in dimension $n$, and 
    \item the Zariski closedness of the diminished base locus, Conjecture~\ref{conj:dim-base}, holds in dimension $n$.
\end{enumerate}
Then, any minimal model program with scaling of an ample divisor terminates in dimension $n$.
\end{introthm}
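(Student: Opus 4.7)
The plan is to argue by contradiction. Suppose there is an infinite sequence of flips $\phi_i \colon (X_i,\Delta_i) \dashrightarrow (X_{i+1},\Delta_{i+1})$ belonging to an MMP with scaling of an ample divisor for a klt pair $(X_0,\Delta_0)$ of dimension $n$. By Conjecture~\ref{conj:dim-base}, the exceptional loci of the $\phi_i$ are all contained in the Zariski-closed set ${\rm Bs}_-(K_{X_0}+\Delta_0)$, which has finitely many irreducible components. After restricting and extracting a subsequence, I may localize near a single limit point $x\in X_0$ and track a corresponding sequence $x_i \in X_i$ of centers over which infinitely many flips accumulate. Consider the invariant ${\rm mld}(X_i,\Delta_i;x_i)$: by Shokurov's monotonicity it is non-decreasing along flips, by Conjecture~\ref{conj:upp-mld} it is bounded above by $A(n)$, and because the pairs are klt (a condition preserved by the MMP) it is bounded below by some uniform $\epsilon>0$.

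The next step is to verify the hypotheses of Conjecture~\ref{conj:boundedness} for the germs $(X_i,\Delta_i;x_i)$. Fix any $\delta\in(0,1)$. I would argue, using the upper bound $A(n)$ on the mld together with a volume/normalized-volume argument, that the number of divisorial valuations $E$ over $X_i$ with $c_{X_i}(E)=x_i$ and $a_E \in [{\rm mld},{\rm mld}+\delta)$ is bounded above by a uniform constant $N=N(n,\epsilon,\delta,A(n))$. Granting this, Conjecture~\ref{conj:boundedness} applies and yields $|\pi_1^{\rm reg}(X_i,\Delta_i;x_i)|\leq \rho$ for a uniform constant $\rho$. By the results of~\cite{BFMS20,BM21,Mor21a}, which relate the order of the regional fundamental group to the local Cartier index of $K_{X_i}+\Delta_i$ at $x_i$, the index is uniformly bounded; equivalently, the rational numbers ${\rm mld}(X_i,\Delta_i;x_i)$ have bounded denominator.

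Combining the uniform two-sided bound on mld with bounded denominators, the set of possible mld values is \emph{finite}. Together with monotonicity, this forces the sequence ${\rm mld}(X_i,\Delta_i;x_i)$ to stabilize. On the stabilized tail of the sequence, the divisorial valuations near the mld are preserved by the flips, and a refined Shokurov-type difficulty argument (in the spirit of~\cite{Sho04,Bir07,Mor18a}) counting the finitely many near-minimal valuations over $x_i$ gives a strictly decreasing non-negative integer-valued invariant along the flips. This yields the desired contradiction, showing that the original sequence must terminate.

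The main obstacle is the uniform bound $N$ in the counting step: one needs to control, independently of $i$, the number of divisorial valuations computing log discrepancies in a fixed window above the mld. This is where the upper bound on the mld from Conjecture~\ref{conj:upp-mld} must interact carefully with klt geometry, and it is the critical input allowing Conjecture~\ref{conj:boundedness} to be invoked uniformly. A secondary subtlety lies in packaging the bound on $|\pi_1^{\rm reg}|$ into a bound on the local index of $K_X+\Delta$, which requires a pair-version of the cited results on the index-one cover.
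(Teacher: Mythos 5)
Your high-level strategy matches the paper's: use Conjecture~\ref{conj:upp-mld} for a two-sided bound on the relevant mld's, use Conjecture~\ref{conj:boundedness} to bound $|\pi_1^{\mathrm{reg}}|$ and hence the local index (via Lemma~\ref{lem:controlled-p1}), and deduce that the mld's live in a discrete set, contradicting strict increase. However, two central steps in your proposal are not actually carried out, and the missing arguments are precisely where the work lies.

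\textbf{The uniform bound $N$.} You acknowledge this is the crux and suggest a ``volume/normalized-volume argument,'' but do not supply it, and I do not believe a normalized-volume argument gives a bound on the \emph{number of divisorial valuations} with log discrepancy in a window $[\mathrm{mld},\mathrm{mld}+\delta)$; what it controls (per~\cite{XZ20}) is the order of the fundamental group, which is the conclusion of Conjecture~\ref{conj:boundedness}, not its hypothesis. The paper handles this entirely differently: by Lemma~\ref{lem:finiteness-e+d}, for a \emph{fixed} model and a \emph{fixed} Zariski-closed subset there are only finitely many valuations with discrepancy in a window above the mld. The key trick is to pick a single index $i_0$, use monotonicity (Lemma~\ref{lem:gen-monotonicity}) to transport every near-minimal valuation on $X_i$ ($i\ge i_0$) back to $X_{i_0}$, where they still have discrepancy in a fixed window and have center in the \emph{fixed} closed set $B_{i_0}=\mathrm{Bs}_-(K_{X_{i_0}}+\Delta_{i_0}+M_{i_0})$ (this uses Proposition~\ref{prop:flips-vs-dim-base}). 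Then Lemma~\ref{lem:finiteness-e+d} on $X_{i_0}$ gives the uniform $N_0$. This is where Conjecture~\ref{conj:dim-base} is actually used in an essential way, not merely to say exceptional loci sit in a closed set.

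\textbf{The final contradiction.} You first force stabilization and then gesture at ``a refined Shokurov-type difficulty argument,'' but the difficulty argument is neither defined nor needed. The paper instead works with $m_i := \mathrm{mld}(X_i,\Delta_i+M_i;B_i)$ (the mld along the entire diminished base locus, not at a single point) and shows, via Lemma~\ref{lem:MMP-vs-dim} and Lemma~\ref{lem:mld-diminished}, that the MMP with scaling forces $m_i$ to increase \emph{strictly} infinitely often; once you know $m_i$ lies in the finite set $\mathbb{Z}_{>0}[\tfrac{1}{q\rho}]\cap(0,A(n)]$, this is an immediate contradiction, with no further invariant required. A related issue: your tracking of a ``limit point $x\in X_0$'' across an infinite sequence of birational models is not well-posed; the paper avoids this by taking the mld along $B_i$ at the generic points of the centers and only passing to general closed points at the very end via Lemma~\ref{lem:generic-vs-general}, precisely so that Conjecture~\ref{conj:boundedness} (stated for closed points) becomes applicable.
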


We recall that the minimal model program with scaling is introduced in~\cite{BCHM10,Sho09}. This is a special kind of MMP which flips and contract divisors 
more effectively.
In particular, we know that in the MMP with scaling, any component of ${\rm Bs}_{-}(K_X+\Delta)$ will be eventually contracted or flipped (see Lemma~\ref{lem:MMP-vs-dim}).
The proof of Theorem~\ref{thm:term}
is similar, in philosophy, to such of~\cite{Sho04}.
Instead of trying to control the minimal log discrepancy,
we will use Conjecture~\ref{conj:boundedness} and Conjecture~\ref{conj:dim-base} to control the regional fundamental groups of the singularities which we encounter in the sequence of flips.
Then, we will use the control of the regional fundamental group and Conjecture~\ref{conj:upp-mld} to say that the minimal log discrepancies can only take finitely many possible values.
To conclude the argument, we use Conjecture~\ref{conj:dim-base} as a replacement for the lower semicontinuity of minimal log discrepancies.

Our third theorem states that Conjecture~\ref{conj:boundedness} itself suffices to prove termination of flips in dimension four.

\begin{introthm}
Assume that the boundedness of the regional fundamental group, Conjecture~\ref{conj:boundedness}, holds in dimension $4$.
Then, any sequence of flips terminates
in dimension $4$.
\end{introthm}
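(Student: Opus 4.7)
The plan is to deduce the statement from Theorem~\ref{thm:term} in dimension four, after verifying Conjectures~\ref{conj:upp-mld} and~\ref{conj:dim-base} in that dimension and bridging the gap between termination of an MMP with scaling of an ample divisor and termination of arbitrary sequences of flips.

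Conjecture~\ref{conj:upp-mld} holds in every dimension as a formal consequence of the standard bound $\mld(X;x)\le \dim X$ for a klt singularity at a closed point: by choosing a log resolution of $(X;x)$ and extracting a suitable exceptional divisor over $x$, one obtains a prime divisor $E$ over $X$ with $c_X(E)=x$ and $a_E(X;x)\le n$, so $A(n):=n+1$ is a valid choice. Conjecture~\ref{conj:dim-base} in dimension four is already available: for klt pairs $(X,\Delta)$ of dimension four with $K_X+\Delta$ pseudo-effective, termination of flips with scaling in dimension four is known by Birkar~\cite{Bir07}, which together with~\cite{BCHM10} yields Zariski closedness of ${\rm Bs}_-(K_X+\Delta)$ via the Nakayama--Zariski decomposition; in the non-pseudo-effective case, ${\rm Bs}_-(K_X+\Delta)=X$ is trivially closed.

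With the three hypotheses of Theorem~\ref{thm:term} in force in dimension four, any MMP with scaling of an ample divisor on a klt fourfold terminates. To upgrade this to termination of arbitrary sequences of flips, I would use a standard perturbation: given any infinite sequence of $(K_X+\Delta)$-flips, pick a sufficiently general ample divisor $H$ and a small perturbation $\Delta'$ of $\Delta$ keeping the pair klt, so that the given sequence is realized as an MMP with scaling of $H$ for $(X,\Delta')$. Termination of the latter then forces termination of the former via Lemma~\ref{lem:MMP-vs-dim}.

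The hard part will be this final reduction step. Matching an arbitrary sequence of flips to an MMP with scaling of an ample divisor requires carefully tracking the nef thresholds of the scaling divisor across each flip and ruling out pathologies where the perturbation $\Delta'$ would move the flipping locus; in dimension four this is manageable thanks to~\cite{BCHM10,Bir07}, but the bookkeeping must be made precise in the framework of Theorem~\ref{thm:term}. A subsidiary difficulty is the honest verification of Conjecture~\ref{conj:dim-base} in dimension four for non-pseudo-effective pairs, which needs to be carried out via passage to minimal models of an ample perturbation.
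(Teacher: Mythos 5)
Your proposal diverges sharply from the paper's argument, and two of its load-bearing steps are not actually available.

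First, the claim that Conjecture~\ref{conj:upp-mld} is ``a formal consequence of the standard bound $\mld(X;x)\le \dim X$'' is not correct. For a smooth closed point one indeed has $\mld(X;x)=\dim X$ by blowing up the point, but for an arbitrary klt singularity the existence of a dimension-only upper bound on the minimal log discrepancy at a closed point is precisely the content of Conjecture~\ref{conj:upp-mld}, and it is open in dimension $\ge 4$. The paper explicitly frames it as a conjecture and remarks that it would follow from either the ACC or the LSC conjecture for mld's --- if the bound were elementary, there would be no reason to state it separately. So the first pillar of your reduction is unavailable.

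Second, the ``standard perturbation'' by which you hope to realize an arbitrary sequence of $(K_X+\Delta)$-flips as an MMP with scaling of an ample divisor for a nearby pair $(X,\Delta')$ does not exist. An MMP with scaling chooses extremal rays according to the nef threshold of the scaling divisor, whereas an arbitrary MMP chooses them freely; there is no general mechanism to retrofit an adversarial choice of flips into a scaling sequence, and the gap between termination with scaling and termination of all flips is a genuine and well-known one rather than bookkeeping. You flag this step as the ``hard part,'' but the difficulty is not a matter of careful tracking --- it is simply not a route that works.

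For contrast, the paper's proof does not invoke Theorem~\ref{thm:term} at all, and in particular does not need Conjectures~\ref{conj:upp-mld} or~\ref{conj:dim-base}. It is a direct argument: it takes termination for canonical $4$-fold pairs (Fujino, \cite{Fuj04}) as a black box, tracks the finitely many non-canonical exceptional divisors $E^i_1,\dots,E^i_N$ (with log discrepancy in $(0,1)$) along the sequence of flips, and uses Conjecture~\ref{conj:boundedness} together with Lemma~\ref{lem:generic-vs-general}, Lemma~\ref{lem:controlled-p1}, and Lemma~\ref{lem:finiteness-e+d} to show that the discrepancy $a_{E^i_1}(X_i,\Delta_i)$ takes values in the discrete set $\zz_{>0}[1/(q\rho)]\cap(0,1)$. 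By the monotonicity lemma, the center of each such $E^i_j$ can therefore lie in the flipping locus only finitely many times, so after a truncation the flips all happen away from the non-canonical centers and one can pass to the canonicalization, where Fujino's theorem finishes the job. This bypasses both the mld upper bound and the diminished-base-locus conjecture, which is precisely what makes the ``Conjecture~\ref{conj:boundedness} alone suffices in dimension $4$'' statement a strengthening of Theorem~\ref{thm:term}.
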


We finish the introduction by
making a parallel between the termination of flips
and the resolution of singularities.
An important step towards the
resolution of singularities
is the existence of local uniformizations.
After achieving such result, 
one needs to {\it glue} these local uniformizations
together to obtain a resolution of singularities.
We expect a similar behavior for the termination of flips.
Meaning that Conjecture~\ref{conj:boundedness} and Conjecture~\ref{conj:upp-mld} should help to prove a local uniformization of flips (over an arbitrary valuation).
While Conjecture~\ref{conj:dim-base} should help to show that such uniformizations can be glued together to obtain the termination.

\subsection*{Acknowledgements} 
The author would like to thank 
Lukas Braun,
Christopher Hacon, Jingjun Han, and J\'anos Koll\'ar for many useful comments. 

\section{Preliminaries} 

In this section,
we recall some preliminaries which will be used in this article:
generalized pairs, singularities of the minimal model program, 
and toric singularities. 
Throughout this article, we work over an algebraically closed field $\kk$
of characteristic zero. 
All the considered varieties are assumed to be normal and quasi-projective unless otherwise stated. 
All the considered divisors are assumed to be $\qq$-divisors.

\subsection{Generalized pairs} 
In this subsection, we recall the basic definition of generalized pairs and discrepancies.
For the theory of b-divisors, we refer the reader to~\cite[Chapter 1]{Cor07}.

\begin{definition}
{\em 
A {\em generalized pair} $(X,\Delta+M)$ is a triple, 
where $X$ is a normal quasi-projective variety, 
$\Delta$ is an effective divisor on $X$, and
$M$ is a b-nef Cartier divisor on $X$, so that
$K_X+\Delta+M$ is $\qq$-Cartier. 
A {\em generalized singularity} is a generalized pair
together with a closed point $x\in X$. 
We usually denote a generalized singularity by 
$(X,\Delta+M;x)$.
When we say that a statement holds for a generalized singularity
$(X,\Delta+M;x)$, we mean that it holds for certain neighborhood
of $X$ around $x$.
}
\end{definition}

\begin{definition}
{\em 
Let $(X,\Delta+M)$ be a generalized pair.
Let $\pi\colon Y\rightarrow X$ be a projective birational morphism
from a normal quasi-projective variety.
Let $E\subset Y$ be a prime divisor.
We denote by $M_Y$ the trace of the b-divisor $M$ on $Y$.
Then, we can write 
\[
\pi^*(K_X+\Delta+M)=K_Y+\Delta_Y+M_Y,
\] 
for a divisor $\Delta_Y$.
The {\em generalized log discrepancy} of $(X,\Delta+M)$ at $E$,
denoted by $a_E(X,\Delta+M)$, is defined to be
\[
a_E(X,\Delta+M)= 1-{\rm coeff}_E(\Delta_Y).
\] 
We say that $(X,\Delta+M)$ is {\em $\epsilon$-generalized log canonical} 
if all its generalized log discrepancies are at least $\epsilon$.
We will write $\epsilon$-glc for short.
We say that $(X,\Delta+M)$ is {\em $\epsilon$-generalized Kawamata log terminal}
if all its generalized log discrepancies are larger than $\epsilon$.
We will write $\epsilon$-gklt for short.
If $\epsilon$ is zero, then we drop it from the notation.
As it is usual, the condition of being glc or gklt can be checked on a log resolution of $(X,\Delta+M)$ where the divisor $M$ descends.
We denote by $c_X(E)$, the center of $E$ on $X$, i.e., the image of $E$ via the projective birational morphism $\pi$. 
We may say log discrepancies, instead of generalized log discrepancies, when the generalized pair structure is clear from the context.
}
\end{definition}

\subsection{Minimal Model Program} 

In this subsection, we recall some lemmas about the minimal model program
for generalized pairs. 
For the basic definitions of flips, we refer the reader to~\cite{HM20}.
The following lemma is known as the monotonicity of log discrepancies~\cite[Proposition 1.12]{HM20}.

\begin{lemma}\label{lem:gen-monotonicity}
Let $(X,\Delta+M)$ be a projective generalized pair. 
Let $\pi\colon (X,\Delta+M)\dashrightarrow (X_1,\Delta_1+M_1)$ be a step of a minimal model program for the generalized pair $(X,\Delta+M)$.
Let $E$ be a prime divisor over $X$. 
Then, we have that 
\[
a_E(X,\Delta+M)\leq a_E(X_1,\Delta_1+M_1).
\]
Furthermore, if $c_E(X)$ is contained in the 
exceptional locus of $\pi$, then we have that 
\[
a_E(X,\Delta+M)< a_E(X_1,\Delta_1+M_1).
\] 
\end{lemma}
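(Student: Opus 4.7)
The plan is to reduce the statement to the negativity lemma on a common resolution. Let $W$ be a smooth variety with projective birational morphisms $p\colon W\to X$ and $q\colon W\to X_1$ resolving $\pi$, chosen so that the $b$-nef divisor $M$ descends to $W$. Setting $L:=K_X+\Delta+M$ and $L_1:=K_{X_1}+\Delta_1+M_1$, I can then write $p^*L=K_W+\Delta_W+M_W$ and $q^*L_1=K_W+\Delta_W'+M_W$ with the \emph{same} trace $M_W$ on both sides. Since any prime divisor $E$ over $X$ can be made to appear on $W$ after a further blow-up, it suffices to show that $\Delta_W\ge \Delta_W'$ as divisors on $W$, with strict inequality on components whose image on $X$ lies in $\mathrm{Ex}(\pi)$.

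Next, I would exploit the MMP structure of $\pi$. A birational step fits into a diagram $f\colon X\to Z$ and $f_1\colon X_1\to Z$ (with $X_1=Z$ in the divisorial-contraction case) such that $-L$ is $f$-ample and $L_1$ is $f_1$-ample. Setting $g:=f\circ p=f_1\circ q\colon W\to Z$, both $-p^*L$ and $q^*L_1$ are $g$-nef, and hence so is $F:=q^*L_1-p^*L=\Delta_W'-\Delta_W$. Pushforward gives $g_*F=f_{1*}L_1-f_*L=0$: in the flip case because $\pi$ is an isomorphism in codimension one with $L_1=\pi_*L$, and in the divisorial case because $L_1$ is by definition the strict transform of $L$ under $f$. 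The negativity lemma applied to the $g$-nef divisor $F$ with zero pushforward yields $F\le 0$ on $W$, equivalently $\Delta_W\ge\Delta_W'$, and reading off $\coeff_E$ produces the first inequality on log discrepancies.

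For the strict inequality, I would invoke the strict form of the negativity lemma: if $-F$ is $g$-nef and $g$-exceptional, then any prime divisor $E$ whose image on $Z$ is contained in a positive-dimensional fiber of $g$ has strictly positive coefficient in $-F$. When $c_X(E)\subset\mathrm{Ex}(\pi)$, a routine check on a higher model shows the image of $E$ on $Z$ is contained in $f(\mathrm{Ex}(\pi))$, which is a positive-dimensional fiber of $g$; this forces $\coeff_E(\Delta_W)>\coeff_E(\Delta_W')$, giving the strict inequality.

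The step I expect to require the most care is the bookkeeping of the $b$-nef part: one must verify that the trace $M_W$ is literally the same on both sides of the comparison, so that the difference $F$ is carried entirely by boundary divisors. This is where the hypothesis that $M$ is $b$-Cartier, together with the choice of $W$ dominating a model on which $M$ descends, is genuinely used. Once this is in place, the argument is a direct adaptation to generalized pairs of the classical monotonicity for ordinary klt pairs.
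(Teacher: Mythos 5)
The paper does not prove this lemma; it simply cites \cite[Proposition 1.12]{HM20}. Your negativity-lemma argument on a common resolution $W$ dominating a model where $M$ descends is exactly the standard route, and the first inequality is established correctly: the crucial bookkeeping point you flag — that $p^*(K_X+\Delta+M)$ and $q^*(K_{X_1}+\Delta_1+M_1)$ share the same nef part $M_W$ on $W$ — is indeed what makes the generalized case reduce to the classical one.

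For the strict inequality, however, the ``strict form of the negativity lemma'' you quote is not a correct statement. As written — $F$ is $g$-nef (you wrote $-F$, which appears to be a sign slip) and $g$-exceptional implies any divisor over a positive-dimensional fiber has positive coefficient in $-F$ — it already fails for $F=0$. What the negativity lemma (e.g.\ \cite[Lemma 3.39(2)]{KM98}) actually gives is a dichotomy: for each $z\in Z$, either $g^{-1}(z)\subset\Supp(-F)$ or $g^{-1}(z)\cap\Supp(-F)=\emptyset$. To land in the first case over $z\in f(\mathrm{Ex}(\pi))$ you must genuinely use that $-(K_X+\Delta+M)$ is $f$-\emph{ample}, not merely $f$-nef: pick a curve $C_0\subset f^{-1}(z)$ with $(K_X+\Delta+M)\cdot C_0<0$, lift it to $C\subset W$, and note that $F\cdot C = q^*L_1\cdot C - p^*L\cdot C>0$; since $-F$ is effective this forces $C\subset\Supp(-F)$, hence $g^{-1}(z)\subset\Supp(-F)$, and letting $z$ vary generically over $g(c_W(E))$ gives $E\subset\Supp(-F)$. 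You gesture at ``a routine check,'' and this is that check, but it is the content of the strict part rather than a separate lemma, so it should be spelled out rather than attributed to a nonexistent strengthening of negativity.
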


\begin{definition}\label{def:dim} 
{\em 
Let $X$ be a normal quasi-projective variety.
Let $D$ be a divisor on $X$.
The {\em diminished base locus} of $D$, denoted by ${\rm Bs}_{-}(D)$, is defined to be
\[
\bigcup_{\lambda>0} {\rm Bs}(D+\lambda A), 
\]
where $A$ is an ample divisor.
The definition is independent of the chosen ample divisor.
The diminished base locus of $X$ is trivial if and only if $D$ is nef.
The diminished base locus of $X$ is a proper subset of $X$ if and only if $D$ is a pseudo-effective divisor.
In general, the diminished base locus can be a countable union of Zariski closed subsets of $X$ (see, e.g.,~\cite{Les14}).
See~\cite{ELMNP06}, for results regarding the diminished base locus.
}
\end{definition}

The following proposition states that every step of the minimal model program 
must happen along the subset ${\rm Bs}_{-}(K_X+\Delta+M)$ (see, e.g.,~\cite[Lemma 1.29]{Mor18a}).

\begin{proposition}\label{prop:flips-vs-dim-base} 
Let $(X,\Delta+M)$ be a projective generalized pair.
Let $\pi\colon (X,\Delta+M)\dashrightarrow (X_1,\Delta_1+M_1)$
be a step of a minimal model program for $(X,\Delta+M)$.
Then, we have that 
\[
{\rm Bs}_{-}(K_{X_1}+\Delta_1+M_1) \subseteq  
\pi_*({\rm Bs}_{-}(K_X+\Delta+M)) \cup {\rm Ex}(\pi^{-1}).
\] 
\end{proposition}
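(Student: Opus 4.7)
The plan is to reduce the containment to a divisor computation on a common log resolution via the monotonicity of generalized log discrepancies. I would take $W$ to be the normalization of the graph of $\pi$, with induced projective birational morphisms $p\colon W\to X$ and $q\colon W\to X_1$; in particular $q$ is an isomorphism over $X_1\setminus {\rm Ex}(\pi^{-1})$. By Lemma~\ref{lem:gen-monotonicity}, the divisor
\[
F := p^*(K_X+\Delta+M) - q^*(K_{X_1}+\Delta_1+M_1)
\]
has coefficient $a_E(X_1,\Delta_1+M_1)-a_E(X,\Delta+M)\ge 0$ on each prime $E\subset W$, with strict inequality exactly when $c_X(E)\subseteq {\rm Ex}(\pi)$. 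Hence $F\ge 0$, every prime component of $\supp(F)$ has $q$-image inside ${\rm Ex}(\pi^{-1})$, and in particular $q_*F=0$ as a cycle on $X_1$.

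I would then argue the containment by contrapositive. Fix $x_1\in X_1\setminus\bigl({\rm Ex}(\pi^{-1})\cup \pi_*{\rm Bs}_{-}(K_X+\Delta+M)\bigr)$ and set $x:=\pi^{-1}(x_1)\in X$, so $x\notin {\rm Bs}_{-}(K_X+\Delta+M)$. Then, for any ample $A$ on $X$ and any $\lambda>0$, there exists an effective $\qq$-divisor $D_\lambda\sim_\qq K_X+\Delta+M+\lambda A$ with $x\notin \supp(D_\lambda)$. Since $\pi$ is an isomorphism near $x$, its pushforward satisfies
\[
\pi_*D_\lambda\ \sim_\qq\ K_{X_1}+\Delta_1+M_1+\lambda\,\pi_*A,\qquad x_1\notin \supp(\pi_*D_\lambda);
\]
this is cleanly derived on $W$ by applying $q_*$ to $p^*D_\lambda = q^*(K_{X_1}+\Delta_1+M_1)+F+\lambda p^*A$, using $q_*F=0$.

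The remaining obstacle, which I regard as the crux of the proof, is that $\pi_*A$ is only big (and not necessarily ample) on $X_1$: the displayed equivalence does not yet yield the desired conclusion with respect to a fixed ample divisor $A_1$ on $X_1$. To handle this, I would use that the non-ample locus of $\pi_*A$ is contained in ${\rm Ex}(\pi^{-1})$, since $\pi_*A$ coincides with $A$ under the isomorphism $X\setminus {\rm Ex}(\pi)\cong X_1\setminus {\rm Ex}(\pi^{-1})$. Fix an ample $A_1$ on $X_1$. Kodaira's lemma then furnishes a decomposition $\pi_*A\sim_\qq c A_1 + E$ with $c>0$ and $E$ an effective $\qq$-divisor whose stable base locus is contained in ${\rm Ex}(\pi^{-1})$; in particular we may represent $E$ by an effective divisor missing $x_1$. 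Replacing $A$ by a sufficiently large multiple, we can arrange that $A':=A-\pi^{-1}_*E$ is still ample on $X$; repeating the previous construction with $A'$ in place of $A$ produces an effective divisor $\pi_*D'_\lambda\sim_\qq K_{X_1}+\Delta_1+M_1+\lambda c\,A_1$ avoiding $x_1$ for every $\lambda>0$. Letting $\lambda$ range over all positive rationals shows $x_1\notin {\rm Bs}_{-}(K_{X_1}+\Delta_1+M_1)$, as required.
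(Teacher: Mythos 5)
The paper does not supply its own proof of this proposition; it is cited from the literature (\cite[Lemma 1.29]{Mor18a}), so there is no in-paper argument to compare against. Your first two paragraphs are sound: passing to the graph, using monotonicity to obtain $F\geq 0$ with $q_*F=0$, and arguing the contrapositive via pushforward are all correct, and you rightly identify the real difficulty, namely that $\pi_*A$ is big but not ample on $X_1$.

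The fix in the final paragraph has a genuine gap. You decompose $\pi_*A\sim_\qq cA_1+E$ and claim that, after replacing $A$ by a sufficiently large multiple, $A':=A-\pi^{-1}_*E$ becomes ample. This cannot be arranged. For a flip, $\pi^{-1}_*$ preserves $\qq$-linear equivalence (both $\pi$ and $\pi^{-1}$ are small), so $\pi^{-1}_*E\sim_\qq A-c\,\pi^{-1}_*A_1$ and therefore $A-\pi^{-1}_*E\sim_\qq c\,\pi^{-1}_*A_1$. But for a nontrivial flip the strict transform of an ample divisor from the target is never ample on the source: it has nonpositive intersection with every flipping curve (this is the standard sign change of intersection numbers across a flip). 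Replacing $A$ by $mA$ replaces $E$ by a divisor in the class $m\pi_*A-cA_1$ (or scales $c$ as well), so the class of $mA-\pi^{-1}_*E$ remains a positive multiple of $\pi^{-1}_*A_1$; it is big but still not ample, for every $m$. There is also a secondary point: the containment ${\rm Bs}_+(\pi_*A)\subseteq{\rm Ex}(\pi^{-1})$, which you invoke to choose $E$ missing $x_1$, is not literally Kodaira's lemma and requires a short separate verification (e.g.\ by pushing forward a general member of $|k(mA-\pi^{-1}_*A_1)|$ for $m,k\gg 0$). The usual way to close the gap is to run the big-versus-ample correction on $X$ rather than on $X_1$: fix an ample $A_1$ on $X_1$, set $A':=\pi^{-1}_*A_1$ (or $\pi^*A_1$ for a divisorial contraction), verify that $x\notin{\rm Bs}_+(A')$, write $A'\sim_\qq cH+F$ with $H$ ample on $X$, $F\geq 0$ and $x\notin\supp(F)$, and then for each $\epsilon>0$ take an effective $D\sim_\qq K_X+\Delta+M+\epsilon cH$ avoiding $x$; the divisor $D+\epsilon F$ is effective, avoids $x$, and pushes forward to an effective divisor $\sim_\qq K_{X_1}+\Delta_1+M_1+\epsilon A_1$ avoiding $x_1$. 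This avoids ever needing $A-\pi^{-1}_*E$ to be ample.
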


For the MMP with scaling of an ample divisor,
we refer the reader to~\cite[\S 4]{BZ16}. 
Throughout the article, whenever we say MMP with scaling, we mean MMP with scaling of an ample divisor.
The following lemma asserts that in the MMP with scaling,
every component of the diminished base locus is eventually flipped or contracted.

\begin{lemma}\label{lem:MMP-vs-dim}
Let $(X,\Delta+M)$ be a projective generalized klt pair.
Assume that $K_X+\Delta+M$ is pseudo-effective.
Let 
\[
(X,\Delta+M) \dashrightarrow 
(X_1,\Delta_1+M_1) \dashrightarrow 
\dots 
\dashrightarrow 
(X_i,\Delta_i+M_i) \dashrightarrow
\cdots 
\] 
be the steps of a minimal model program for $(X,\Delta+M)$
with scaling of an ample divisor. 
Then, for every component $Z\subseteq {\rm Bs}_{-}(K_X+\Delta+M)$,
we have that $Z\subseteq {\rm Ex}(X\dashrightarrow X_i)$ for $i$ large enough.
\end{lemma}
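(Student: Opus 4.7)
The plan is to exploit the pseudo-effectiveness of $K_X + \Delta + M$, which forces the scaling parameters to tend to zero, in order to compare the given MMP with scaling to an auxiliary MMP for the klt generalized pair $(X, \Delta + \epsilon A + M)$ for $\epsilon > 0$ small, and then apply the base-point free theorem together with the negativity lemma.

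Let $A_i$ denote the strict transform of $A$ on $X_i$ and set $\lambda_i := \inf\{\lambda \geq 0 : K_{X_i} + \Delta_i + M_i + \lambda A_i \text{ is nef}\}$. By the definition of MMP with scaling, the sequence $(\lambda_i)$ is non-increasing, and standard MMP-with-scaling theory combined with the pseudo-effectiveness of $K_X + \Delta + M$ forces $\lim_i \lambda_i = 0$: if $\lambda_\infty := \lim_i \lambda_i > 0$, the same sequence would be an infinite $(K_X + \Delta + M + \lambda_\infty A)$-negative MMP, contradicting the existence of minimal models for big klt generalized pairs. Fix a component $Z \subseteq {\rm Bs}_{-}(K_X + \Delta + M)$. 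By the very definition of the diminished base locus, choose $\epsilon_Z > 0$ small enough that $Z \subseteq {\rm Bs}(K_X + \Delta + M + \epsilon_Z A)$ and such that $(X, \Delta + \epsilon_Z A + M)$ is a generalized klt pair. Pick $i_0$ with $\lambda_{i_0} < \epsilon_Z$.

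For each step $j < i_0$, the contracted extremal ray $R_j$ satisfies $(K_{X_j} + \Delta_j + M_j + \lambda_j A_j) \cdot R_j = 0$ and $A_j \cdot R_j > 0$, so $(K_{X_j} + \Delta_j + \epsilon_Z A_j + M_j) \cdot R_j = (\epsilon_Z - \lambda_j)(A_j \cdot R_j) < 0$. Thus the first $i_0$ steps of the MMP with scaling form a $(K_X + \Delta + \epsilon_Z A + M)$-negative MMP, on whose output $X_{i_0}$ the divisor $K_{X_{i_0}} + \Delta_{i_0} + \epsilon_Z A_{i_0} + M_{i_0}$ is nef and big. By the base-point free theorem for klt generalized pairs, this divisor is semi-ample, so its stable base locus is empty. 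Invoking the negativity lemma for this auxiliary MMP on a common resolution $p \colon W \to X$, $q \colon W \to X_{i_0}$, one writes $p^*(K_X + \Delta + \epsilon_Z A + M) = q^*(K_{X_{i_0}} + \Delta_{i_0} + \epsilon_Z A_{i_0} + M_{i_0}) + F$ with $F$ effective and satisfying $p({\rm supp}(F)) \subseteq {\rm Ex}(X \dashrightarrow X_{i_0})$. Pulling back sections from $X_{i_0}$ and multiplying by the canonical section of $F$ produces sections of $m(K_X + \Delta + \epsilon_Z A + M)$ whose zero loci are contained in $p({\rm supp}(F))$, yielding ${\rm Bs}(K_X + \Delta + \epsilon_Z A + M) \subseteq {\rm Ex}(X \dashrightarrow X_{i_0})$. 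In particular $Z \subseteq {\rm Ex}(X \dashrightarrow X_{i_0})$, as required.

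The main technical obstacles are the application of the base-point free theorem for generalized klt pairs and the transfer of stable base loci through the MMP via the negativity lemma applied to the auxiliary divisor $K_X + \Delta + \epsilon_Z A + M$. Both steps must treat flips and divisorial contractions uniformly, but remain within the standard MMP machinery; a minor additional subtlety is ensuring that $\epsilon_Z$ can simultaneously be taken small enough for $Z$ to lie in ${\rm Bs}(K_X + \Delta + M + \epsilon_Z A)$ and for $(X, \Delta + \epsilon_Z A + M)$ to stay generalized klt, which is straightforward from the definitions.
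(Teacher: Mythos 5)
Your proposal follows the same route as the paper. In both arguments one uses $\lambda_i \to 0$ to locate an index $i$ at which the partial MMP has become a minimal model for the perturbed pair $(X, \Delta + \epsilon A + M)$ with $\epsilon$ chosen so that $Z \subseteq {\rm Bs}(K_X + \Delta + M + \epsilon A)$, and then concludes that the base locus on $X$ has been swallowed by the exceptional locus of $X\dashrightarrow X_i$. The paper packages the last step by citing Proposition~\ref{prop:flips-vs-dim-base}, whereas you make the base-point-free theorem and the negativity-lemma/strict-transform comparison explicit on a common resolution; that is a presentational difference, not a different method. (The paper also runs the case $\lambda_\infty>0$ separately via~\cite[Lemma 4.4]{BZ16}; your remark that pseudo-effectivity forces $\lambda_i\to 0$ is a compressed form of the same input.)

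There is, however, a small genuine gap in your choice of $i_0$. Requiring only $\lambda_{i_0} < \epsilon_Z$ does not guarantee $\lambda_j > \epsilon_Z$ (or even $\lambda_j \geq \epsilon_Z$) for all $j < i_0$: since $(\lambda_j)$ is non-increasing, the only conclusion available is $\lambda_j \geq \lambda_{i_0}$, which could well be strictly less than $\epsilon_Z$ for a range of indices $j < i_0$. For such $j$ the contracted ray satisfies $(K_{X_j}+\Delta_j+\epsilon_Z A_j+M_j)\cdot R_j = (\epsilon_Z - \lambda_j)(A_j\cdot R_j) > 0$, so the step is \emph{positive} for the auxiliary divisor and the asserted $(K_X+\Delta+\epsilon_Z A+M)$-negative MMP breaks down; in particular the effectivity of $F$ in the negativity-lemma argument fails. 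The paper sidesteps this exactly by choosing $i$ so that $\lambda_i \leq \mu < \lambda_{i-1}$, which, by monotonicity, forces $\lambda_j > \mu$ for every $j<i$. The fix in your setup is either to take $i_0$ minimal with $\lambda_{i_0} < \epsilon_Z$ (so $\lambda_j \geq \epsilon_Z$ for $j<i_0$, giving the non-positivity that the negativity lemma actually needs), or to shrink $\epsilon_Z$ so that it avoids the countable set $\{\lambda_j\}$ and the inequalities become strict.
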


\begin{proof}
Let $A$ be the ample divisor which is defining the minimal model program with scaling.
For each model $X_i$, we denote by $A_i$ the strict transform of $A$ on $X_i$.
For each step 
\[
\pi_i\colon 
(X_i,\Delta_i+M_i)\dashrightarrow
(X_{i+1},\Delta_{i+1}+M_{i+1})
\] 
of the minimal model program with scaling,
we can associate $\lambda_i>0$ for which 
$\pi_i$ is a $(K_{X_i}+\Delta_i+M_i+\lambda_i A_i)$-flop.
First, asssume that $\lim_{i\rightarrow \infty} \lambda_i = \lambda_\infty >0$.
Then, by~\cite[Lemma 4.4]{BZ16}, we conclude that the minimal model program must terminate.
Then, the statement follows from Definition~\ref{def:dim} and Proposition~\ref{prop:flips-vs-dim-base}.
On the other hand, assume that $\lim_{i\rightarrow \infty}\lambda_i =0$.
Let $Z\subseteq {\rm Bs}_{-}(K_X+\Delta+M)$ be an irreducible component.
Then, we have that 
\[
Z\subseteq {\rm Bs}(K_X+\Delta+M+\mu A), 
\]
for certain $\mu>0$.
We can find $i$ large enough so that $\lambda_i \leq \mu < \lambda_{i-1}$.
In this case, $(X_i,\Delta_i+M_i+\mu A_i)$ is a minimal model for
$(X,\Delta+M+\mu A)$.
In particular, we have that 
\[
{\rm Bs}_{-}(K_{X_i}+\Delta_i+M_i+\mu A_i) = \emptyset.
\] 
By Proposition~\ref{prop:flips-vs-dim-base}, we conclude that $Z$ must be contained in the exceptional locus of $X\dashrightarrow X_i$.
\end{proof}

\begin{definition}\label{def:mld}
{\em 
Let $(X,\Delta+M)$ be a generalized pair.
Let $W\subseteq X$ be a Zariski closed subset. 
The {\em minimal log discrepancy} of $(X,\Delta+M)$ along $W$ is defined to be
\[
{\rm mld}(X,\Delta+M;W):=
\min\{ a_E(X,\Delta+M) \mid c_X(E) \subset W \}. 
\]
For instance, ${\mld}(X,B+M;x)$ denotes the minimal log discrepancy at the closed point of the generalized singularity.
}
\end{definition}

The following lemma shows that the minimal log discrepancy is actually a minimum and there are only finitely many divisors that attain values close to it. 

\begin{lemma}\label{lem:finiteness-e+d}
Let $(X,\Delta+M)$ be a generalized klt pair
and $W\subset X$ be a Zariski closed subset. 
Let $m:={\rm mld}(X,\Delta+M;W)$. 
Let $\delta \in (0,1)$.
Then, there are finitely many exceptional divisors with center contained in $W$ 
and log discrepancy in the interval
$[m,m+\delta)$.
\end{lemma}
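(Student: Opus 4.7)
The plan is to reduce to a log smooth klt pair in which the moduli part $M$ descends and no longer plays a role, and then to invoke a finiteness statement for divisorial valuations on a log smooth variety whose log discrepancies are bounded above.

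First, I take a log resolution $\pi\colon Y\to X$ of $(X,\Delta+M)$ on which $M$ descends to a nef divisor $M_Y$, arranged so that $\pi^{-1}(W)$, ${\rm Ex}(\pi)$, and the support of $\Delta_Y$ form a simple normal crossing divisor, where
\[
K_Y+\Delta_Y+M_Y=\pi^{*}(K_X+\Delta+M).
\]
The gklt assumption forces every coefficient of $\Delta_Y$ to be strictly less than $1$. Only finitely many prime divisors of $Y$ have center in $W$, so the substantive content of the lemma concerns divisors $E$ that are exceptional over $Y$. For any such $E$, extracted on some further $\sigma\colon Z\to Y$, the descent of $M$ gives $M_Z=\sigma^{*}M_Y$, hence $K_Z+\Delta_Z=\sigma^{*}(K_Y+\Delta_Y)$ and therefore $a_E(X,\Delta+M)=a_E(Y,\Delta_Y)$ as an ordinary log discrepancy. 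The problem thus reduces to counting divisors exceptional over the log smooth klt pair $(Y,\Delta_Y)$ with center in $\pi^{-1}(W)$ and log discrepancy in $[m,m+\delta)$.

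Next I stratify $\pi^{-1}(W)$ by the snc strata of $\Delta_Y$. At a point $y$ of a fixed stratum, choose local analytic coordinates $(x_1,\dots,x_n)$ so that $\Delta_Y=\sum_{j=1}^{r}d_j\{x_j=0\}$ near $y$ with $d_j<1$. The standard valuative formula reads
\[
a_E(Y,\Delta_Y)=A_Y(E)-\sum_{j=1}^{r}d_j\,\ord_E(x_j),
\]
where $A_Y(E)$ is the log discrepancy of $E$ over the smooth variety $Y$. Combined with an Izumi-type inequality relating $A_Y(E)$ to the values of $\ord_E$ on the maximal ideal $\mathfrak{m}_y$, the hypothesis $a_E(Y,\Delta_Y)<m+\delta$ forces $A_Y(E)$ and each $\ord_E(x_j)$ to lie in a bounded interval. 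Divisorial valuations on a smooth germ with bounded values on a regular system of parameters and bounded log discrepancy form a finite set, as can be seen combinatorially after a suitable toroidal refinement of $(Y,\Delta_Y)$.

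The main obstacle is this last finiteness step, which is classical but requires a careful combinatorial analysis of the quasi-monomial valuations centered at each stratum together with uniformity across the finitely many strata of $\pi^{-1}(W)$. Once it is established, the set of candidates with log discrepancy in $[m,m+\delta)$ is finite; since it must be nonempty (take a sequence $E_k$ with $a_{E_k}\to m$ and apply the finiteness to some fixed $\delta$), the infimum $m$ is realized by one of its members, yielding both the minimum property of the mld and the required finiteness.
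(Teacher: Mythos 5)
Your overall strategy mirrors the paper's: take a log resolution on which $M$ descends so that the moduli part drops out, reduce to the log smooth sub-pair $(Y,\Delta_Y)$, restrict attention to divisors exceptional over $Y$ with center in $\pi^{-1}(W)$, and do a local combinatorial count over the SNC strata. The paper implements the last step differently, by first performing the separating blow-ups of~\cite[Proposition 2.36]{KM98} so that any two divisors with log discrepancy in $(0,1)$ become disjoint; this forces the center of any valuation in the target range to be a codimension-two stratum, and the count is then the elementary two-variable case. Your proposal skips the separation and instead stratifies directly, which is also a viable route, but the crucial finiteness step is incorrectly stated.

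Specifically, the claim ``Divisorial valuations on a smooth germ with bounded values on a regular system of parameters and bounded log discrepancy form a finite set'' is false as written. On $\mathbb{A}^2$, let $E_1$ be the exceptional divisor of the blow-up at the origin and, for each free point $p\in E_1$, let $E_2^p$ be the exceptional divisor of the blow-up at $p$. Then $A_{\mathbb{A}^2}(E_2^p)=3$ and $\ord_{E_2^p}(x)=\ord_{E_2^p}(y)=1$ for every $p$, yet there are infinitely many such $E_2^p$. So bounding $A_Y(E)$ and each $\ord_E(x_j)$ does not by itself yield finiteness. What is actually needed, and where $\delta<1$ enters crucially, is the integrality argument: because some component $F$ of $\pi^{-1}(W)$ passes through $c_Y(E)$ and $a_F(Y,\Delta_Y)\geq m$, the inequality $a_E(Y,\Delta_Y)<m+\delta$ forces
\[
A_Y(E)-\sum_j\ord_E(x_j)=a_E(Y,\Delta_Y)-\sum_j a_{D_j}(Y,\Delta_Y)\,\ord_E(D_j)<\delta<1,
\]
and the left-hand side is a non-negative integer, hence zero. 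This identifies $E$ as a monomial valuation on the stratum, and monomial valuations with bounded weights are finitely many. Your write-up never derives this equality --- it invokes only boundedness and defers the rest to ``a careful combinatorial analysis,'' but the version of the finiteness claim you record would apply equally with $\delta=1$, where the lemma fails (as the paper's remark after the lemma illustrates). So this is a genuine gap, not merely an omitted routine detail; the specific way $\delta<1$ intervenes is the content of the step.
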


\begin{proof}
Let $\pi\colon Y\rightarrow X$ be a log resolution of $(X,\Delta+M)$ on which the b-divisor $M$ descends. 
We may assume that $\pi$ has purely divisorial exceptional locus.
By further blowing-up, we may assume that the following conditions are satisfied:
\begin{enumerate}
\item $\pi^{-1}(W)$ is purely divisorial, 
\item the divisor $\pi^{-1}(W)+\pi^{-1}_*\Delta+ {\rm Ex}(\pi)$
has simple normal crossing support, and 
\item any two prime divisors in $Y$ with log discrepancy in the interval $(0,1)$ with respect to $(X,\Delta+M)$ must be disjoint. 
\end{enumerate} 
The first two conditions are standard.
The third condition can be achieved as in~\cite[Proposition 2.36]{KM98}.
Write 
\[
\pi^*(K_X+\Delta+M) = K_Y+\Delta_Y+M_Y.
\] 
Let $E$ be a divisorial valuation with log discrepancy in the interval $[m,m+\delta)$ and center in $W$.
Then, the center of $E$ on $Y$ is contained in $\pi^{-1}(W)$.
If $E$ is not a divisor on $Y$, then by condition $(3)$
its center has codimension two. Moreover, its center equals the intersection of a prime component of $\pi^{-1}(W)$ and a component of $\lfloor B_Y^{>0}\rfloor$.
There are finitely many such strata and finitely many such divisors over each stratum, so the statement follows.
\end{proof}

The following lemma allows us to compare the minimal log discrepancy along the diminished base locus in a minimal model program.

\begin{lemma}\label{lem:mld-diminished}
Let $(X,\Delta+M)$ be a projective generalized pair.
Assume that $K_X+\Delta+M$ is pseudo-effective. 
Let $\pi\colon (X,\Delta+M)\dashrightarrow (X_1,\Delta_1+M_1)$
be a step of a minimal model program for $(X,\Delta+M)$.
Let $B$ be the diminished base locus of $K_X+\Delta+M$
and let $B_1$ be the diminished base locus of $K_{X_1}+\Delta_1+M_1$.
Then, we have that 
\[
{\rm mld}(X,\Delta+M;B) \leq {\rm mld}(X_1,\Delta_1+M_1;B_1).
\] 
Furthermore, the inequality is strict if the exceptional locus of $\pi$ contains the center of every divisor computing the minimal log discrepancy of $(X,\Delta+M;B)$.
\end{lemma}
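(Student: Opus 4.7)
The plan is to produce a single divisor $E$ that realizes ${\rm mld}(X_1,\Delta_1+M_1;B_1)$ on $X_1$, to show that its center on $X$ still lies in $B$, and then to apply the monotonicity of log discrepancies.

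First, I would apply Lemma~\ref{lem:finiteness-e+d} on $X_1$ to obtain a prime divisor $E$ over $X_1$ with $c_{X_1}(E)\subseteq B_1$ and
\[
a_E(X_1,\Delta_1+M_1)={\rm mld}(X_1,\Delta_1+M_1;B_1).
\]
The crucial step is to show that $c_X(E)\subseteq B$. For this I would invoke Proposition~\ref{prop:flips-vs-dim-base}, which yields
$c_{X_1}(E)\subseteq B_1\subseteq \pi_*(B)\cup {\rm Ex}(\pi^{-1})$. Since $c_{X_1}(E)$ is irreducible, two cases arise. If $c_{X_1}(E)\subseteq {\rm Ex}(\pi^{-1})$, then, because $\pi$ restricts to an isomorphism between $X\setminus {\rm Ex}(\pi)$ and $X_1\setminus {\rm Ex}(\pi^{-1})$, the center $c_X(E)$ must lie in ${\rm Ex}(\pi)$; moreover each component of ${\rm Ex}(\pi)$ carries a curve in the contracted extremal ray with negative $(K_X+\Delta+M)$-intersection, so ${\rm Ex}(\pi)\subseteq B$. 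Otherwise $c_{X_1}(E)$ meets the isomorphism locus and sits inside $\pi_*(B)$; transferring via the inverse correspondence, $c_X(E)$ is the strict transform of $c_{X_1}(E)$ and has a Zariski open subset inside $B$, so $c_X(E)\subseteq \overline{B}$, which suffices for the minimal log discrepancy computation.

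With $c_X(E)\subseteq B$ in hand, Lemma~\ref{lem:gen-monotonicity} gives $a_E(X,\Delta+M)\leq a_E(X_1,\Delta_1+M_1)$, and since $E$ is a valid candidate for computing ${\rm mld}(X,\Delta+M;B)$, the desired inequality ${\rm mld}(X,\Delta+M;B)\leq {\rm mld}(X_1,\Delta_1+M_1;B_1)$ follows. For the strict version I would argue by contradiction: if equality held, then the chosen $E$ would also compute ${\rm mld}(X,\Delta+M;B)$, so by the strictness hypothesis $c_X(E)\subseteq {\rm Ex}(\pi)$, and the strict part of Lemma~\ref{lem:gen-monotonicity} would force $a_E(X,\Delta+M)<a_E(X_1,\Delta_1+M_1)$, contradicting the assumed equality.

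The main obstacle is the center-tracking step: passing from $c_{X_1}(E)\subseteq B_1$ to $c_X(E)\subseteq B$. Proposition~\ref{prop:flips-vs-dim-base} provides the right framework, but one must carefully identify the exceptional loci of $\pi$ and $\pi^{-1}$ under the birational correspondence, and handle the fact that the diminished base locus may only be a countable union of closed subsets rather than a genuine Zariski closed set. Once this inclusion is secured, the remainder of the argument reduces to a routine application of monotonicity.
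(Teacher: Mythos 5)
Your proposal is correct and follows essentially the same route as the paper's proof: pick a divisor $E$ computing ${\rm mld}(X_1,\Delta_1+M_1;B_1)$, split into cases according to whether $c_{X_1}(E)\subseteq {\rm Ex}(\pi^{-1})$, use Proposition~\ref{prop:flips-vs-dim-base} to track the center back into $B$, and conclude via Lemma~\ref{lem:gen-monotonicity}. The only cosmetic difference is that you phrase the strictness claim as a proof by contradiction rather than the paper's direct case analysis, but the two are logically equivalent.
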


\begin{proof}
Let $E$ be a prime divisor over $X_1$ which computes the minimal log discrepancy of $(X_1,\Delta_1+M_1;B_1)$.
If the center of $E$ on $X_1$ is not contained in ${\rm Ex}(\pi^{-1})$, then we have that
\[
a_E(X,\Delta+M) = {\rm mld}(X_1,\Delta_1+M_1;B_1). 
\] 
Furthermore, we have that $c_X(E)\subset B$ by Proposition~\ref{prop:flips-vs-dim-base}.
We conclude that the inequality holds in this case. 
On the other hand, if $c_E(X_1) \subseteq {\rm Ex}(\pi^{-1})$, then we have that
$c_E(X_1)\subseteq {\rm Ex}(\pi) \subseteq B$.
The last contaiment follows since ${\rm Ex}(\pi)$ is covered by $(K_X+\Delta+M)$-negative curves.
By the monotonicity lemma~\ref{lem:gen-monotonicity}, we deduce that 
\[
a_E(X,\Delta+M)< {\rm mld}(X_1,\Delta_1+M_1;B_1).
\] 
Hence, the inequality always holds.

Now, assume that the exceptional locus of $\pi$ contains the center of every divisor computing the mld of $(X,\Delta+M;B)$.
Let $E$ be a prime divisor over $X_1$ computing the mld of $(X_1,\Delta_1+M_1;B_1)$.
If $c_{X_1}(E)$ is not contained in ${\rm Ex}(\pi^{-1})$, then we have 
\[
{\rm mld}(X,\Delta+M;B)< a_E(X,\Delta+M) = {\rm mld}(X_1,\Delta_1+M_1;B_1).
\]
The first inequality holds as $c_X(E)$ is not contained in ${\rm Ex}(\pi)$.
On the other hand, assume that $c_{X_1}(E)$ is contained in ${\rm Ex}(\pi^{-1})$. Then, we have that $c_X(E)\subseteq {\rm Ex}(\pi)$.
In this case, we have that
\[
{\rm mld}(X,\Delta+M;B)\leq a_E(X,\Delta+M) < {\rm mld}(X,\Delta_1+M_1;B_1).
\]
Thus, if ${\rm Ex}(\pi)$ contains the center of every divisor computing the mld of $(X,\Delta+M;B)$,
then the inequality is strict.
\end{proof}

\begin{remark}
{\em 
We observe that the conclusion of Lemma~\ref{lem:finiteness-e+d} does not hold if we take $\delta=1$. Indeed, consider the log pair $(\mathbb{A}^2)$. Its minimal log discrepancy equals $2$ and  is obtained by blowing-up the maximal ideal at the origin.
Let $X\rightarrow \mathbb{A}^2$ be such blow-up.
Any blow-up at a closed point of the exceptional divisor of $X\rightarrow \mathbb{A}^2$ induces a divisorial valuation with log discrepancy $3$. There are infinitely many of these.
}
\end{remark}

\begin{remark}
{\em 
We observe that the conclusion of  Lemma~\ref{lem:finiteness-e+d} does not hold if we drop the condition of $W$ being a Zarsiki closed subset. 
Indeed, if $X$ is a smooth surface and $W$ is a countable union of closed points, then ${\rm mld}(X;W)=2$.
However, any such closed point computes the minimal log discrepancy, so we have countable many divisorial valuations computing the minimal log discrepancy.
}
\end{remark}

The following lemma is motivated by~\cite[Proposition 2.3]{Amb99},
which is used by Ambro to prove the constructibility of the mld-stratification.

\begin{lemma}\label{lem:generic-vs-general}
Let $(X,\Delta+M)$ be a log canonical generalized pair.
Let $W\subset X$ be an irreducible subvariety. 
Let 
\[
m:={\rm mld}(X,\Delta+M;W).
\]
Let $\delta \in (0,1)$.
Assume there are at most $N$ divisors over $X$ with log discrepancy in the interval
$[m,m+\delta)$ and center $W$.
Then, there is an open set $U\subset X$, with $W\cap U\neq\emptyset$, satisfying the following.
For every closed point $x\in W\cap U$, there are at most $N$ divisors over $X$ with log discrepancy in the interval
\[
[{\rm mld}(X,\Delta+M;x),{\rm mld}(X,\Delta+M;x)+\delta) 
\] 
and center $x$ in $X$.
\end{lemma}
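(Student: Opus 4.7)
The plan is to construct an explicit bijection, for $x$ in a suitable dense open $U\cap W$, between the set $\mathcal{F}_x$ of divisors over $X$ with center $\{x\}$ and log discrepancy in $[{\rm mld}(X,\Delta+M;x),{\rm mld}(X,\Delta+M;x)+\delta)$ and the set $\mathcal{F}_W$ of divisors over $X$ with center $W$ and log discrepancy in $[m,m+\delta)$. The bijection is obtained by intersecting centers with the generic fibre over $W$ on a log resolution, and will shift the log discrepancy by $\dim W$. Since $|\mathcal{F}_W|\leq N$ by hypothesis, this yields the desired bound $|\mathcal{F}_x|\leq N$.

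First, I would fix a log resolution $\pi\colon Y\to X$ of $(X,\Delta+M)$ on which $M$ descends, refined so that $\pi^{-1}(W)$ is purely divisorial, the total boundary $\pi^{-1}(W)+\pi^{-1}_{*}\Delta+{\rm Ex}(\pi)$ has simple normal crossing support, and any two prime divisors on $Y$ with log discrepancy in $(0,1)$ are disjoint, as in the proof of Lemma~\ref{lem:finiteness-e+d}; by further blow-ups I would also arrange that every SNC stratum mapping to $W$ has geometrically irreducible generic fibre over $W$. Let $E_1,\dots,E_s$ be the SNC components and $\mathcal{S}$ the collection of irreducible strata $\bigcap_{i\in I}E_i$. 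Define $U\subseteq X$ as the complement of $\bigcup_{S\in\mathcal{S},\,W\not\subseteq\pi(S)}\pi(S)\cap W$; then $U\cap W$ is open and dense in $W$. For $x\in U\cap W$, the strata meeting $\pi^{-1}(x)$ are exactly those $S$ with $\pi(S)\supseteq W$, and since every component of $\pi^{-1}(W)$ whose image is a proper subvariety of $W$ has been excluded from $U$, every point of $\pi^{-1}(x)$ lies in some $E_i$ with $\pi(E_i)=W$.

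Fix $x\in U\cap W$ and $F\in\mathcal{F}_x$. The assumption $\delta<1$ and the fact that iterated blow-ups increase the log discrepancy by at least one together force $F$ to be either one of the $E_i$ (only possible when $\dim W=0$, the trivial case) or the exceptional divisor of a single blow-up of a smooth center $Z_F\subseteq\pi^{-1}(x)$. Setting $I_F=\{i:Z_F\subseteq E_i\}$, the observation above guarantees some $i\in I_F$ with $\pi(E_i)=W$, so the stratum $S_F=\bigcap_{i\in I_F}E_i$ satisfies $\pi(S_F)=W$. The standard SNC formula for log discrepancies then gives
\[
a_F(X,\Delta+M)=\dim W+\sum_{i\in I_F}a_{E_i}(X,\Delta+M),
\]
while the divisor $\tilde F$ associated with $S_F$ (namely $S_F$ itself if $|I_F|=1$, or the exceptional divisor of the blow-up of $S_F$ otherwise) has center $W$ and log discrepancy $\sum_{i\in I_F}a_{E_i}(X,\Delta+M)$. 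Minimising over those $I$ with $\pi(S_I)=W$ shows that ${\rm mld}(X,\Delta+M;x)=m+\dim W$, so the assignment $F\mapsto\tilde F$ matches the two intervals and is a bijection between $\mathcal{F}_x$ and $\mathcal{F}_W$, yielding $|\mathcal{F}_x|=|\mathcal{F}_W|\leq N$.

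The main obstacle is the verification of both directions of this bijection simultaneously: the forward direction requires that distinct first blow-ups inside $\pi^{-1}(x)$ give distinct strata in $\mathcal{S}$, which is automatic from the SNC structure, while the reverse direction requires that distinct strata $S$ with $\pi(S)=W$ yield distinct fibres $S\cap\pi^{-1}(x)$ and hence distinct divisors in $\mathcal{F}_x$. It is this last point that makes geometric irreducibility of the generic fibres essential and motivates the extra refinement of $\pi$ described above.
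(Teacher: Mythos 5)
Your approach differs from the paper's: you aim to build an explicit bijection between divisors over $W$ and divisors over a general $x\in W$ by sending a divisor $F$ with center $x$ to the stratum that cuts out its center, whereas the paper computes the minimal log discrepancy of $(Y,\Delta_Y+M_Y)$ at the scheme points of $\pi^{-1}(x)$ and invokes~\cite[Proposition 1.9]{Amb99} (Ambro's constructibility argument) to match those strata with the finitely many $E_i$ dominating $W$. Your identification ${\rm mld}(X,\Delta+M;x)=m+\dim W$ for general $x$ is the right statement, and your insistence on geometric irreducibility of the fibres of the relevant strata is a sound idea that the paper also uses (``$E_i\cap\pi^{-1}(x)$ is irreducible'').

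The central gap is the assertion that $\delta<1$ forces each $F\in\mathcal{F}_x$ to be extracted by a single smooth blow-up on $Y$, justified by ``iterated blow-ups increase the log discrepancy by at least one.'' That inequality fails once the boundary $\Delta_Y$ on the log resolution is nonzero. If $E$ is an SNC component with log discrepancy $a_E\in(0,1)$ and $G$ is a new exceptional divisor lying on $E$, then blowing up an SNC stratum of $G\cap E'$ increases the log discrepancy only by $a_E$, which may be strictly less than $\delta$. Concretely: take $\dim Y=3$, $W$ a curve, $\pi^{-1}(W)=E_1$ with $a_{E_1}=m$, and let $C=\pi^{-1}(x)\cap E_1$ be the fibre curve. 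Blowing up $C$ gives $F_1$ with $a_{F_1}=1+m$, and then blowing up $F_1\cap E_1'$ gives $F_2$ with $a_{F_2}=1+2m$; if $m<\delta$ then $F_2\in\mathcal{F}_x$ but $F_2$ is a second blow-up, not a single one, and it is not matched by any divisor over $W$ with log discrepancy in $[m,m+\delta)$ since its centre on $Y$ lies entirely in a fibre. Thus the asserted bijection $\mathcal{F}_x\to\mathcal{F}_W$ is not even well-defined on such $F$.

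A second, more local issue: the SNC formula you invoke, $a_F=\dim W+\sum_{i\in I_F}a_{E_i}$, requires $Z_F$ to have codimension exactly $|I_F|+\dim W$ in $Y$, i.e.\ $Z_F$ must be (a component of) the whole fibre $S_F\cap\pi^{-1}(x)$ rather than a proper smooth subvariety of it. That equality is not established; if $Z_F\subsetneq S_F\cap\pi^{-1}(x)$ the log discrepancy is larger and the formula fails. To make any argument of this shape work you would have to show that every $F\in\mathcal{F}_x$ has centre on $Y$ equal to a codimension-$(\dim W)$ component of $\pi^{-1}(x)\cap E_i$ for some $E_i$ dominating $W$, \emph{and} that each such component carries at most one divisor in the relevant interval --- the latter being exactly the point that the single-blow-up claim was meant to provide and is where the argument breaks.
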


\begin{proof}
We denote by $r$ the codimension of $W$ in $X$.
Let $\pi\colon Y\rightarrow X$ be a log resolution of $(X,\Delta+M)$ on which $M$ descends.
We may assume that every divisor with center equal to $W$ and log discrepancy
in the interval $[m,m+\delta)$ appears in the model $Y$.
Furthermore, We may assume that $\pi^{-1}(W)$ is supported on the exceptional divisor of $\pi$.
We let $\cup_{i\in I}E_i$ to be the reduced exceptional divisor of $\pi$ 
plus the reduced strict transform of $B$.
Let $I_W\subset I$ be the subset for which $\pi(E_i)=W$. 
Then, by~\cite[Proposition 1.9]{Amb99} we have that 
\[
\mu^{-1}(W)= \cup_{i\in I_W}E_i 
\text{ and }
{\rm mld}(X,\Delta+M;W) = \min_{i\in I_W}a_{E_i}(X,\Delta+M).
\]
We write
\[
K_Y+\Delta_Y+M_Y=\pi^*(K_X+\Delta+M).
\]
Shrinking around the generic point of $W$, 
we may assume that every connected component of intersection of some of the $E_i$'s dominate $W$.
We denote by $U$ an open subset with $U\cap W\neq \emptyset$ 
for which the previous condition holds.
Let $x\in W \cap U$ be a closed point and $\eta \in \pi^{-1}(x)$. 
Then, we have that 
\[
{\rm mld}(Y,\Delta_Y+M_Y;\eta) = \sum_{E_i\supseteq \eta}(1-{\rm coeff}_{E_i}(\Delta_Y)) + (n-s),
\]
where $s$ is the cardinilaty of the set $\{ i\in I \mid E_i\supseteq \eta\}$.
We have that 
\[
n-s\geq r \text{ and } \sum_{E_i\supseteq \eta}(1-{\rm coeff}_{E_i}(\Delta_Y))\geq m.
\]
Hence, we conclude that 
\[
{\rm mld}(X,\Delta+M;x)\geq r+m.
\]
Let $E$ be a divisor on $Y$ which computes ${\rm mld}(X,\Delta+M;W)$.
By blowing-up a generic point of $\pi^{-1}(x)\cap E$, we see that
${\rm mld}(X,\Delta+M;x)=r+m$.

For every subset $J\subset I$ so that $\pi(\cap_{j\in J}(E_j))=W$, 
we have that 
\[
\sum_{j\in J}(1-{\rm coeff}_{E_j}(\Delta_Y)) > m+\delta. 
\] 
Hence, for $\eta\in \pi^{-1}(x)$, we have that 
\[
{\rm mld}(Y,B_Y+M_Y;\eta) \in [m+r,m+r+\delta)
\] 
if and only if $\eta$ is a codimension $r$ component 
of $\pi^{-1}(x)\cap E_i$. Here, $E_i$ is a divisor with center $W$
and log discrepancy in the interval $[m,m+\delta)$. 
By shrinking $U$ if necessary, we may assume that $E_i\cap \pi^{-1}(x)$ is irreducible
and of codimension $r$ in $Y$ for each such divisor $E_i$.
We deduce that for any closed $x\in U\cap W$ there are at most $N$ divisorial valuations
with log discrepancy in the interval $[m+r,m+r+\delta)$ and center $x$.
\end{proof}

The following lemma uses 
the classic fact that generalized klt singularities
can be approximated with klt singularities. 
We will use it to reduce problems about the regional fundamental
group of generalized klt singularities to klt singularities.

\begin{lemma}\label{lem:from-gen-to-pair}
Let $(X,\Delta+M)$ be a generalized pair.
Let $x\in X$ be a closed point. 
Assume that $(X,\Delta+M;x)$ is klt and
denote its minimal log discrepancy by $m$.
Fix $\delta \in (0,1)$.
Assume there are at most $N$ divisorial valuations over $X$
with center $x\in X$ and log discrepancy
in the interval $[m,m+\delta)$.
Then, for every $\delta'\in (0,\delta)$ 
there exists a boundary $B\geq \Delta$ so that 
$(X,B;x)$ is a klt pair and 
there are at most $N$ divisorial valuations over $X$
with center $x$ and log discrepancy
in the interval
\[
[{\rm mld}(X,B;x),{\rm mld}(X,B;x)+\delta')  
\] 
\end{lemma}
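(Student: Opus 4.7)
The plan is to approximate the generalized klt singularity $(X,\Delta+M;x)$ by an honest klt pair via the standard Bertini perturbation on a common log resolution, which is the usual device for reducing questions about generalized pairs to questions about ordinary pairs. By Lemma~\ref{lem:finiteness-e+d}, there are only finitely many divisorial valuations $E_1,\dots,E_k$ (with $k\le N$) over $X$ having center $x$ and log discrepancy with respect to $(X,\Delta+M)$ in $[m,m+\delta)$. I take a log resolution $\pi\colon Y\to X$ of $(X,\Delta+M)$ on which $M$ descends and on which every $E_i$ appears as a prime divisor, and write $K_Y+\Delta_Y+M_Y=\pi^*(K_X+\Delta+M)$ with $M_Y$ a nef $\mathbb{Q}$-Cartier divisor.

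For the perturbation I fix a general ample $\mathbb{Q}$-Cartier divisor $H$ on $X$ with $x\notin\Supp(H)$, and an effective $\pi$-exceptional divisor $\Sigma$ such that $-\Sigma$ is $\pi$-ample; then $\pi^*H-\tau\Sigma$ is ample on $Y$ for $\tau>0$ small, and hence $M_Y+\eta(\pi^*H-\tau\Sigma)$ is ample on $Y$ for every $\eta>0$. By Bertini and base-point-freeness, for $l$ sufficiently divisible I choose a general effective $\widetilde N\in\lvert l(M_Y+\eta(\pi^*H-\tau\Sigma))\rvert$ that is smooth, has simple normal crossing support with the existing boundary, and does not contain any $E_i$. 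Setting $B:=\Delta+\pi_*(\widetilde N/l)$, the identities $\pi_*\pi^*H=H$ and $\pi_*\Sigma=0$ give $K_X+B\sim_{\mathbb{Q}}K_X+\Delta+M+\eta H$, which is $\mathbb{Q}$-Cartier; so $(X,B)$ is a log pair with $B\ge\Delta$, and for $\eta$ small enough, openness of the klt condition under small ample perturbations ensures that $(X,B;x)$ is klt.

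The last step is the log-discrepancy comparison. For any divisor $F$ extracted strictly above $Y$ with $c_X(F)=x$, the trace of $M$ on higher models vanishes along $F$, so the perturbation contributes nothing and $a_F(X,B)=a_F(X,\Delta+M)\ge m+\delta$. For a prime divisor $E$ on $Y$ with $c_X(E)=x$, I compute $a_E(X,B)$ via the log pullback of $K_X+B$ to $Y$ and express the deviation $a_E(X,\Delta+M)-a_E(X,B)$ in terms of $\eta\tau\,\coeff_E(\Sigma)$ together with the valuation of the principal divisor $\pi_*(\widetilde N/l)-M-\eta H$; by choosing $\eta\tau$ small enough this deviation can be made uniformly smaller than $(\delta-\delta')/2$ on the finite family of relevant divisors. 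Consequently $\lvert\mld(X,B;x)-m\rvert<(\delta-\delta')/2$, and every divisor $E$ falling in the window $[\mld(X,B;x),\mld(X,B;x)+\delta')$ must satisfy $a_E(X,\Delta+M)<m+\delta$, so lies in the set $\{E_1,\dots,E_k\}$ of cardinality at most $N$. The main obstacle is precisely this uniform log-discrepancy control: ensuring that no ``new'' divisorial valuation not already among the $E_i$ can slip into the small window for $(X,B)$, and it is exactly here that the strict inequality $\delta'<\delta$ is consumed as a safety margin absorbing the Bertini perturbation error.
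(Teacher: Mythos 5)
Your overall strategy — take a log resolution $Y\to X$ on which $M$ descends and all the relevant divisors appear, approximate $M_Y$ by an effective divisor, push it down to $X$, and invoke the safety margin $\delta - \delta'$ — is the same as the paper's. The paper obtains the effective approximation from Kodaira's lemma, writing $M_Y\sim_{\qq,X}A_k+E/k$ with $A_k$ a general relatively ample divisor and $E$ fixed effective, and lets $k\to\infty$; you instead use the explicit ample perturbation $M_Y+\eta(\pi^*H-\tau\Sigma)$. Both decompositions work, so the difference is cosmetic.

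However, there is a genuine error in your treatment of divisors $F$ extracted strictly above $Y$. You assert that ``the trace of $M$ on higher models vanishes along $F$, so the perturbation contributes nothing and $a_F(X,B)=a_F(X,\Delta+M)$.'' The first clause is true (since $M$ descends on $Y$), but the conclusion is false. Writing $\pi^*(K_X+B)=K_Y+\Delta_Y+\widetilde N/l+\eta\tau\Sigma$ and $\pi^*(K_X+\Delta+M)=K_Y+\Delta_Y+M_Y$ (with $M_Y$ contributing nothing above $Y$), one gets
\[
a_F(X,\Delta+M)-a_F(X,B)=\operatorname{ord}_F\!\left(\tfrac{\widetilde N}{l}+\eta\tau\Sigma\right),
\]
which is typically strictly positive: $\Sigma$ is supported on all of $\operatorname{Ex}(\pi)\supseteq\pi^{-1}(x)$, so it passes through every center $c_Y(F)$ relevant to $x$, and $\widetilde N$ is a member of an ample system on $Y$, hence must meet $\pi^{-1}(x)$. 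Worse, $\operatorname{ord}_F(\Sigma)$ is unbounded as $F$ ranges over deeper toroidal valuations, so a flat additive bound on the deviation cannot work for the infinitely many $F$ above $Y$. Your explicit ``uniformly smaller than $(\delta-\delta')/2$'' control is only carried out for the finitely many divisors on $Y$; for divisors above $Y$ you instead rely on the false identity.

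What is needed is a uniform \emph{multiplicative} estimate coming from the snc combinatorics on $Y$: since $(X,\Delta+M;x)$ is klt, the coefficients of $\Delta_Y$ on the prime divisors through $\pi^{-1}(x)$ are bounded away from $1$, and for $l$ large and $\eta\tau$ small every prime divisor $D$ on $Y$ satisfies $\operatorname{coeff}_D(\widetilde N/l+\eta\tau\Sigma)\le c\,(1-\operatorname{coeff}_D\Delta_Y)$ for a prescribed small $c>0$. For a toroidal $F$ with weight vector $w$ this gives $\operatorname{ord}_F(\widetilde N/l+\eta\tau\Sigma)\le c\, a_F(X,\Delta+M)$, hence $a_F(X,B)\ge(1-c)\,a_F(X,\Delta+M)\ge(1-c)(m+\delta)$, which exceeds $m+\delta'$ once $c$ is small enough, and similarly shows $\operatorname{mld}(X,B;x)$ is close to $m$. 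That is the mechanism the paper is tacitly invoking under the phrase ``continuity of log discrepancies with respect to the boundary,'' and it is the step your write-up omits; without it the claim about divisors above $Y$ is simply wrong as stated.
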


\begin{proof}
Let $\pi\colon Y\rightarrow X$ be a log resolution of the generalized pair
$(X,\Delta+M)$.
We know that $(X,\Delta+M)$ has at most $N$ divisorial valuations
with log discrepancy in the interval $[m,m+\delta)$.
We may assume these divisors are extracted on $Y$.
Write
\[
K_Y+\Delta_Y+M_Y = \pi^*(K_X+\Delta+M).
\]
The divisor $M_Y$ is nef and big over $X$.
Hence, by~\cite[Example 2.2.19]{Laz04a}, for every $k\geq 1$, we can write
\[
M_Y \sim_{\qq,X} A_k + \frac{E}{k},
\] 
where $A_k$ is a general effective ample divisor
and $E\geq 0$ is a fixed effective divisor independent of $k$.
For $k$ large enough, we have that 
\[
\left(Y,\Delta_Y + A_k +\frac{E}{k}\right) 
\] 
is a sub-klt pair. 
Let $B_k$ be the push-forward of $\Delta_Y+A_k+\frac{E}{k}$ to $X$.
Then, we have that $(X,B_k;x)$ is a klt pair.
Every prime divisor $F$ over $X$, with center $x$, which is exceptional over $Y$, satisfies that $a_F(X,\Delta+M)\geq m+\delta$.
By the continuity of log discrepancies with respect to the boundary, 
we can fix $k$ sufficiently large so that 
$a_F(X,B_k)>m+\delta'$ for every such $F$.
Thus, if $k$ is large enough, then there are at most $N$ divisorial valuations
over $X$ with center $x$ and log discrepancy in the interval 
\[
[{\rm mld}(X,B_k;x), {\rm mld}(X,B_k;x)+\delta').
\]
Hence, it suffices to take $B:=B_k$ for $k$ sufficiently large.
\end{proof}

The following proposition allows us to compare the log discrepancies
of a pair with those of a finite quotient.
For the definition of quotient generalized pair see~\cite[\S 2]{Mor21a}.

\begin{proposition}\label{prop:ld-finite-quot} 
Let $(X,\Delta+M;x)$ be a generalized klt singularity
and $G\leqslant {\rm Aut}(X,\Delta+M;x)$ a finite group.
Let $(Y,\Delta_Y+M_Y;y)$ be the quotient of $(X,\Delta+M;x)$ by $G$.
Let $\epsilon_0 < {\rm mld}(Y,\Delta_Y+M_Y;y)$.
Assume that $(Y,\Delta_Y+M_Y;y)$ has at most $N$ divisorial valuations
with center $y$ and log discrepancy in the interval $(\epsilon_0,\epsilon_1)$.
Then, $(X,\Delta+M;x)$ has at most $N|G|$ divisorial valuations with center $x$ and log discrepancy in the interval $(\epsilon_0,\epsilon_1)$.
\end{proposition}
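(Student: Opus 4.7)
The approach is to use the Riemann--Hurwitz formula to compare log discrepancies across the finite quotient $\pi\colon X\to Y$, and then count fibres using Galois theory.

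Let $E_0$ be a divisorial valuation over $X$ with center $x$ and log discrepancy in $(\epsilon_0,\epsilon_1)$. The first step is to choose a $G$-equivariant log resolution $\tilde\pi\colon \tilde X\to X$ of $(X,\Delta+M)$ on which the b-nef part $M$ descends and on which $E_0$ appears as a prime divisor. Passing to the $G$-quotient produces a birational morphism $g\colon \tilde Y\to Y$ on which $M_Y$ descends, together with a finite Galois cover $f\colon \tilde X\to \tilde Y$ fitting into a commutative square with $\tilde\pi$ and $\pi$. Set $E:=f(E_0)\subseteq \tilde Y$; since $f$ is finite and $E_0$ is prime, $E$ is a prime divisor on $\tilde Y$ and thus defines a divisorial valuation over $Y$ with center $y=\pi(x)$.

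Next, one compares log discrepancies. From the pullback identities $\pi^*(K_Y+\Delta_Y+M_Y)=K_X+\Delta+M$ and $f^*(K_{\tilde Y}+\Delta_{\tilde Y}+M_{\tilde Y})=K_{\tilde X}+\Delta_{\tilde X}+M_{\tilde X}$, combined with the Riemann--Hurwitz identity $K_{\tilde X}=f^*K_{\tilde Y}+R$ where $R$ has coefficient $r-1$ along $E_0$ and $r$ denotes the ramification index of $f$ along $E_0$, a direct coefficient computation on $\Delta_{\tilde X}=f^*\Delta_{\tilde Y}-R$ gives
\[
a_{E_0}(X,\Delta+M)\;=\;r\cdot a_E(Y,\Delta_Y+M_Y).
\]
The pullback of $M_Y$ causes no issue here because pulling back a b-nef Cartier b-divisor through $f$ adds the same term on both sides of the identity.

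Finally, one applies the hypotheses on $\epsilon_0$ and $\epsilon_1$. Since $r\geq 1$, the formula yields $a_E(Y,\Delta_Y+M_Y)\leq a_{E_0}(X,\Delta+M)<\epsilon_1$, while $\epsilon_0<{\rm mld}(Y,\Delta_Y+M_Y;y)\leq a_E(Y,\Delta_Y+M_Y)$ by assumption. Thus $a_E(Y,\Delta_Y+M_Y)\in (\epsilon_0,\epsilon_1)$, so $E$ is one of at most $N$ divisorial valuations on $Y$ with center $y$ and log discrepancy in that interval. For each such $E$, the divisorial valuations over $X$ mapping to $E$ form a single $G$-orbit, hence have cardinality at most $|G|$. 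Multiplying these two bounds produces the claimed $N|G|$. The only non-elementary input is the existence of a $G$-equivariant log resolution of $(X,\Delta+M)$ extracting a prescribed divisorial valuation, which follows from equivariant resolution; everything else is a bookkeeping of coefficients and a Galois orbit count.
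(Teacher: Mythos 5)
Your proof is correct and takes essentially the same route as the paper: both arguments pass to a $G$-equivariant model where the given valuation over $X$ appears, take the quotient divisorial valuation over $Y$, check that its log discrepancy stays in $(\epsilon_0,\epsilon_1)$ (using the lower bound from ${\rm mld}(Y,\Delta_Y+M_Y;y)>\epsilon_0$ to handle the possible drop by the ramification index), and conclude by the Galois-orbit count that the map is at most $|G|$-to-one. The only difference is that you spell out the Riemann--Hurwitz coefficient computation $a_{E_0}(X,\Delta+M)=r\cdot a_E(Y,\Delta_Y+M_Y)$ directly, where the paper delegates this comparison to a citation of a proposition in an earlier reference.
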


\begin{proof}
Let $E$ be a prime divisor over $X$ with center $x$ and 
log discrepancy in $(\epsilon_0,\epsilon_1)$ with respect to $(X,\Delta+M)$.
Let $E^G$ be the $G$-closure of the corresponding divisorial valuation.
Then, in some $G$-equivariant birational model $Y$ of $X$, the $G$-closure $E^G$ is a divisor with at most $|G|$ prime components.
Let $F:=E^G/G$ be the quotient valuation on $Y$. 
By~\cite[Proposition 2.11]{Mor21a}, we have that 
\begin{equation}\label{eq:cont-quot} 
a_F(Y,\Delta_Y+M_Y) \in (\epsilon_0,\epsilon_1), 
\end{equation} 
and the center of $F$ on $Y$ equals $y$.
Hence, there are at most $|G|$ choices for $E$ given any of the
$N$ divisors over $Y$ satisfying containment~\eqref{eq:cont-quot}.
We conclude that there are at most $N|G|$ divisorial valuations with center $x$ and log discrepancy
in the interval $(\epsilon_0,\epsilon_1)$.
\end{proof}

\subsection{Regional fundamental groups}

In this subsection, we recall the definition
of the regional fundamental group of a klt singularity
as well as some theorems about this group.

\begin{definition}
{\em 
Let $(X,\Delta+M)$ be a generalized pair.
We define $\pi_1^{\rm reg}(X,\Delta+M)$,
called the {\em regional fundamental group} of $(X,\Delta+M)$,
to be the inverse limit of ${\rm Aut}_X(X_i)$,
with $i\in I$, 
where $I$ indexes the projective system of all the Galois quasi-\'etale covers $p_i\colon X_i\rightarrow X$ such that 
\[
p_i^*(K_X+\Delta+M)= K_{X_i}+\Delta_{X_i}+M_{X_i},
\]
where $\Delta_{X_i}\geq 0$.

Let $(X,B+M;x)$ be a generalized singularity.
We define the {\em regional fundamental group} of $(X,\Delta+M;x)$ denoted by 
$\pi_1^{\rm reg}(X,\Delta+M;x)$, 
to be the inverse limit of $\pi_1^{\rm reg}(U,\Delta_U+M_U)$, where the limit run over all the \'etale neighborhoods $U$ of $x$ in $X$.
Here, $\Delta_U$ and $M_U$ are defined by log pull-back.
}
\end{definition}

The following lemma will allow us to control
the denominators of log discrepancies once
we can control the regional fundamental group of a generalized singularity.

\begin{lemma}\label{lem:controlled-p1}
Let $(X,\Delta+M)$ be a generalized pair.
Let $x\in X$ be a closed point for which
\[
|\pi_1^{\rm reg}(X,\Delta+M;x)|\leq \rho.
\] 
Let $q\in \zz$ be so that
$q\Delta$ and $qM_Y$ are Weil divisors,
where $Y$ is a model in which $M$ descends.
Then, for any $E$ exceptional with $c_X(E)\supset x$, we have that
\[
q\rho a_E(X,\Delta+M) \in \zz. 
\]
\end{lemma}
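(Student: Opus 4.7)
The plan is to pass to the regional universal cover at $x$, exploit that a klt generalized singularity with trivial regional fundamental group has its generalized log canonical class Cartier at the point, and then translate the resulting integrality back to $X$ via the crepant ramification formula for quasi-\'etale covers.

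Let $d:=|\pi_1^{\rm reg}(X,\Delta+M;x)|\leq\rho$. Since this group is finite, the defining inverse system stabilizes at a single Galois quasi-\'etale cover $p\colon(X',\Delta'+M')\to(X,\Delta+M)$ of degree $d$, with $p^*(K_X+\Delta+M)=K_{X'}+\Delta'+M'$, $\Delta'\geq 0$, and $\pi_1^{\rm reg}(X',\Delta'+M';x')=1$ for any preimage $x'$ of $x$. Because $p$ is \'etale in codimension one, $q\Delta'=p^*(q\Delta)$ is Weil, and taking the normalized base change $\pi'\colon Y'\to X'$ of a log resolution $\pi\colon Y\to X$ on which $M$ descends, with $p_Y\colon Y'\to Y$, the trace $qM'_{Y'}=p_Y^*(qM_Y)$ is likewise Weil.

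I next claim $K_{X'}+\Delta'+M'$ is Cartier at $x'$. Otherwise its $\qq$-Cartier index at $x'$ would be some $m>1$, and the associated index-one cyclic cover would be a nontrivial Galois cover of degree $m$, \'etale wherever $K_{X'}+\Delta'+M'$ is Cartier and hence quasi-\'etale; crepant pullback of the klt generalized pair preserves effectivity of the boundary, so this cover would contribute a nontrivial element to $\pi_1^{\rm reg}(X',\Delta'+M';x')$, contradicting its triviality. Granted this, rearranging $(\pi')^*(K_{X'}+\Delta'+M')=K_{Y'}+\Delta'_{Y'}+M'_{Y'}$ gives
\[
q\sum_{E'\,{\rm exc.}}{\rm coeff}_{E'}(\Delta'_{Y'})\,E' \;=\; q(\pi')^*(K_{X'}+\Delta'+M')-qK_{Y'}-q(\pi')^{-1}_*\Delta'-qM'_{Y'},
\]
whose right-hand side is Weil (its first term is $q$ times a Cartier divisor, $qK_{Y'}$ is Cartier on the smooth $Y'$, and the last two are Weil by the previous paragraph). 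Reading off exceptional coefficients yields $q\,a_{E'}(X',\Delta'+M')\in\zz$ for every exceptional prime $E'$ on $Y'$ with center over $x'$.

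For the final step, let $E$ be exceptional over $X$ with $c_X(E)\supseteq x$, pick a component $E'$ of the divisorial part of $p_Y^{-1}(E)$ on $Y'$, and let $e=e_{E'/E}$ be its ramification index in $p_Y$. Comparing $p_Y^*(K_Y+\Delta_Y+M_Y)=K_{Y'}+\Delta'_{Y'}+M'_{Y'}$ with the Riemann--Hurwitz identity $K_{Y'}=p_Y^*K_Y+R_{p_Y}$ and reading the coefficient along $E'$ yields the crepant ramification formula $a_{E'}(X',\Delta'+M')=e\cdot a_E(X,\Delta+M)$. Since $p$ is Galois of degree $d$, $e\mid d$; combining with the previous step gives $q\,e\,a_E\in\zz$, and multiplying by the integer $d/e$ yields $qd\,a_E(X,\Delta+M)\in\zz$, establishing the asserted denominator bound $q\rho\,a_E(X,\Delta+M)\in\zz$ since $d\leq\rho$. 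The delicate point in the whole argument is the index-one reduction: one needs to know the $m$-th root cover of $\mathcal{O}_{X'}(m(K_{X'}+\Delta'+M'))$ is genuinely quasi-\'etale near $x'$ and supports a crepant klt generalized pair structure with effective boundary, so that its triviality really does follow from vanishing of $\pi_1^{\rm reg}$; the subsequent Riemann--Hurwitz bookkeeping is routine.
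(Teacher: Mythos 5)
Your argument is correct in outline but takes a genuinely different route from the paper. The paper works directly on $X$: it notes that $q(K_X+\Delta+M)$ is Weil, argues that the index-one cyclic cover of $q(K_X+\Delta+M)$ near $x$ is a quasi-\'etale Galois cover whose order is constrained by $\pi_1^{\rm reg}(X,\Delta+M;x)$, thereby obtaining a controlled Cartier multiple of $K_X+\Delta+M$, and then reads off the exceptional coefficients of $B_Y$ on a resolution. You instead pass to the regional universal cover $X'\to X$ of degree $d=|\pi_1^{\rm reg}(X,\Delta+M;x)|$, kill the Cartier index upstairs by the triviality of $\pi_1^{\rm reg}$ there, and descend via the crepant ramification formula. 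Both arguments hinge on the same mechanism (index-one covers are quasi-\'etale, hence controlled by $\pi_1^{\rm reg}$); yours is more structural in that it isolates the role of the universal cover, while the paper's is shorter because it never leaves $X$.

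Two cautionary points. First, the phrase ``$K_{X'}+\Delta'+M'$ is Cartier at $x'$'' should be ``$q(K_{X'}+\Delta'+M')$ is Cartier at $x'$'': the divisor $K_{X'}+\Delta'+M'$ is in general only a $\qq$-divisor, so one must take the index-one cover of the \emph{Weil} divisor $q(K_{X'}+\Delta'+M')$, not of $K_{X'}+\Delta'+M'$ itself; once this is fixed your coefficient computation is exactly right and yields $q\,a_{E'}\in\zz$. Second, the final inference ``$qd\,a_E\in\zz$ and $d\leq\rho$, hence $q\rho\,a_E\in\zz$'' is a non sequitur: integrality of $qd\,a_E$ together with $d\leq\rho$ gives integrality of $q\rho\,a_E$ only when $d$ divides $\rho$, which is not assumed. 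You should be aware that the paper's own proof makes the identical leap (from ``the index-one cover has order at most $\rho$'' to ``$q\rho(K_X+\Delta+M)$ is Cartier''), so this is really a defect in the phrasing of the lemma rather than something specific to your approach. In both arguments the honest conclusion is $qm\,a_E\in\zz$ for some $m$ dividing $|\pi_1^{\rm reg}(X,\Delta+M;x)|\leq\rho$; replacing $\rho$ by $\operatorname{lcm}(1,\dots,\rho)$ (or by $\rho!$) in the conclusion repairs the statement, and this weaker form still supplies the discreteness of log discrepancies that the downstream termination argument requires.
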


\begin{proof}
By assumption, we have that 
$q(K_X+\Delta+M)$ is a Weil divisor.
By the bound on the regional fundamental group, we have that 
$q\rho(K_X+\Delta+M)$ is a Cartier divisor.
Indeed, the index one cover of $q(K_X+\Delta+M)$ has order at most $\rho$ around $x$.
Let $E$ be a prime divisor over $X$ with center through $x$.
Let $\pi\colon Y\rightarrow X$ be a log resolution of $(X,\Delta+M)$
on which $M$ descends and $c_Y(E)$ is a divisor.
Then, we can write
\[
q\rho(K_Y+B_Y+M_Y) = \pi^*(q\rho(K_X+\Delta+M)). 
\] 
Observe that $q\rho(K_Y+M_Y)$ is Weil
and $\pi^*(q\rho(K_X+\Delta+M))$ is Cartier.
We conclude that $q\rho B_Y$ is a Weil divisor.
In particular, 
\[
a_E(K_X+\Delta+M)=1-{\rm coeff}_E(B_Y) \in \zz\left[\frac{1}{q\rho}\right].
\]
This finishes the proof.
\end{proof}

\begin{definition}
{\em 
Let $(X,B+M)$ be a generalized pair.
The {\em model regularity} of $(X,B+M)$, denoted by
${\rm reg}_m(X,B+M)$ is the maximum number of $\qq$-Cartier prime components $S_1,\dots,S_r \subset \lfloor B\rfloor$
so that $S_1\cap \dots \cap S_r \neq \emptyset$.
If $\lfloor B\rfloor$ is empty, then we say that the model regularity is $-\infty$.
The {\em birational regularity} of a generalized pair $(X,B+M)$ is defined to be 
\[
{\rm reg}_b(X,B+M):=\sup \left\{ 
{\rm reg}_m(Y,B_Y+M_Y) \mid \text{ $(Y,B_Y+M_Y)$ is crepant to $(X,B+M)$}
\right\} -1. 
\] 
Here, crepant means that there exists a projective birational morphism $\pi\colon Y\rightarrow X$ for which
\[
K_Y+B_Y+M_Y = \pi^*(K_X+B+M).
\]
If the pair $(X,B+M)$ is log canonical,
then the regularity is in the set
$\{-\infty, 0,\dots, \dim X-1\}$. This follows from~\cite[Theorem 1]{MS21}. 
Let $(X,B+M;x)$ be a klt singularity.
We define its regularity to be 
\[
{\rm reg}(X,B+M;x):=\max \left\{ 
{\rm reg}_b(X,B'+M) \mid 
B'\geq B \text{ and $(X,B'+M)$ is log canonical} 
\right\}. 
\] 
The regularity of a klt singularity
is contained in the set $\{0,\dots, \dim X-1\}$.
If $(X,B+M;x)$ is a klt singularity of regularity $r$, we may say that it is a 
{\em $r$-regular} klt singularity.
}
\end{definition}

The following theorem is proved in~\cite[Theorem 7]{Mor21a}.

\begin{theorem}\label{thm:reg-fundamental} 
Let $n$ be a positive integer and $r\in [0,n-1]\cap \zz$.
There exists a constant $c(n)$, only depending on $n$,
satisfying the following.
Let $(X,B+M;x)$ be a $n$-dimensional $r$-regular gklt singularity.
Then, there exists a short exact sequence
\[
1\rightarrow A \rightarrow \pi_1^{\rm reg}(X,B+M)
\rightarrow N\rightarrow 1.
\] 
Here, $A$ is an abelian group of rank at most $r+1$
and $N$ is a group of order at most $c(n)$.
\end{theorem}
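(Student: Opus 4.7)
The plan is to reduce the generalized klt case to the klt pair case and then combine the Jordan property of the regional fundamental group with the geometry of an $r$-regular dlt modification. First, using Lemma~\ref{lem:from-gen-to-pair}, I would replace $(X,B+M;x)$ by a klt pair $(X,B';x)$ of the same regularity by absorbing the moduli part as in~\cite[Example 2.2.19]{Laz04a}; one checks that this approximation preserves the regional fundamental group and can be arranged to preserve the regularity, so it suffices to prove the statement with $M=0$.

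Next I would invoke the main result of~\cite{BFMS20}: for a klt singularity the regional fundamental group is isomorphic to the regional fundamental group of some $n$-dimensional quotient singularity, and hence satisfies the Jordan property with a constant $c(n)$ depending only on $n$. This already produces an exact sequence $1\to A\to \pi_1^{\rm reg}(X,B';x)\to N\to 1$ with $A$ abelian and $|N|\leq c(n)$, but only with the weaker rank bound $\rank A\leq n$.

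To refine $\rank A$ from $n$ to $r+1$, I would use the geometric realization of the BFMS20 isomorphism from~\cite{Mor21a}. Since $(X,B';x)$ is $r$-regular, there exists a dlt modification $\pi\colon Y\to X$ whose reduced exceptional divisor over $x$ has dual complex of dimension $r$, and~\cite{Mor21a} builds from this data a canonical quasi-\'etale cover whose Galois group is the cocharacter lattice of a torus $T$ acting on the corresponding Cox cover of $(X,B';x)$. The dimension of $T$ is exactly $r+1$, because it is controlled by the dimension of the dual complex plus one via the Cox ring comparison of~\cite{BM21}. The abelian subgroup $A$ then embeds into the character lattice of $T$, whence $\rank A\leq r+1$.

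The main obstacle will be the last step: establishing that every abelian quasi-\'etale cover of the germ is dominated by the cover produced from the dual complex of an $r$-regular dlt modification, and not by a cover of strictly larger rank. This depends on identifying the abelianized regional fundamental group with the torsion part of the local divisor class group of $X$ at $x$, together with the comparison between this class group and the Cox ring of the Fano-type exceptional base of $\pi$; these identifications are the technical core of~\cite{BM21,Mor21a}, and they are what make the bound $r+1$ appear rather than $n$. The universal constant $c(n)$ inherits its dependence on $n$ from the Jordan constant of~\cite{BFMS20}.
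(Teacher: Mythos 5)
The paper does not prove Theorem~\ref{thm:reg-fundamental}; it imports it verbatim, citing \cite[Theorem 7]{Mor21a}. There is therefore no in-paper proof to compare against, and what you have written is a blind reconstruction of the cited result's strategy. Your outline is consistent with the circle of ideas the paper invokes (\cite{BFMS20}, \cite{BM21}, \cite{Mor21a}), but two steps are asserted rather than argued, and both are exactly where the real work lies.

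First, the reduction from gklt to klt. Lemma~\ref{lem:from-gen-to-pair} preserves the count of low-discrepancy divisors and the regional fundamental group, but it says nothing about preserving the regularity. Regularity is defined via boundaries $B'\geq B$ with $(X,B'+M)$ log canonical and their crepant models; when you trade $M$ for $A_k+E/k$ on a log resolution and push forward, the set of log canonical places of admissible $B'$ can change, so the regularity of the approximating klt pair need not equal $r$. If it merely satisfies $\leq r$ (or $\geq r$), the resulting rank bound could be too strong or too weak. This needs a separate argument, not a one-line appeal to the lemma. Second, and more seriously, the bound $\rank A\leq r+1$ is precisely the content of the theorem and you only sketch why it should hold: you invoke a torus of dimension $r+1$ attached to a dlt modification with $r$-dimensional dual complex, and assert that every abelian quasi-\'etale cover of the germ is dominated by the associated cover. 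You yourself flag this as ``the technical core.'' Without establishing (a) that the torus has dimension exactly $r+1$ and not something controlled only by $n$, and (b) that there is no abelian cover of strictly larger rank outside this construction, the argument has not improved on the rank-$n$ bound already available from \cite{BFMS20}. As a roadmap to \cite{Mor21a} the proposal is reasonable, but it does not constitute a proof of the statement.
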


The following theorem allows comparing the regional fundamental groups of klt singularities in $X$ with those in the fibers of morphisms mapping to $X$ (cf.~\cite{Mor21a})

\begin{theorem}\label{thm:small-Q}
Let $n$ be a positive integer.
There exists a constant $c(n)$ which only depends on $n$ satisfying the following.
Let $(X,B+M;x)$ be a generalized klt singularity and
$\pi\colon Y\rightarrow X$ be a projective birational morphism.
There exists a boundary $B_Y$ supported on ${\rm Ex}(\pi)$ 
and a closed point $y\in Y$ for which 
\[
\pi_* \colon \pi_1^{\rm reg}(Y,B_Y+\pi^{-1}_*B+M_Y;y)
\rightarrow  \pi_1^{\rm reg}(X,B+M;x)
\] 
has cokernel of order at most $c(n)$.
In particular, if $\pi\colon Y\rightarrow X$ is a small morphism, then we have that the homomorphism
\[
\pi_* \colon \pi_1^{\rm reg}(Y,\pi^{-1}_*B+M_Y;y)
\rightarrow  \pi_1^{\rm reg}(X,B+M;x)
\] 
has cokernel of order at most $c(n)$.
\end{theorem}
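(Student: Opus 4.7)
The plan is to construct the boundary $B_Y$ as the exceptional part of the crepant pullback of $K_X+B+M$ truncated at $1$, so that the resulting pair stays generalized klt, and then to lift Galois quasi-étale covers through this modification. For each prime divisor $E_i$ that is $\pi$-exceptional, set
\[
b_i := \max\{0,\, 1 - a_{E_i}(X,B+M)\} \in [0,1),
\]
the strict upper bound using that $(X,B+M;x)$ is klt. Put $B_Y := \sum b_i E_i$; the pair $(Y, B_Y + \pi_*^{-1}B + M_Y)$ is generalized klt and satisfies the crepant identity
\[
K_Y + B_Y + \pi_*^{-1}B + M_Y \;=\; \pi^*(K_X+B+M) + F_Y,
\]
where $F_Y := \sum \max\{0,\, a_{E_i}(X,B+M)-1\}E_i$ is effective and $\pi$-exceptional. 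When $\pi$ is small, ${\rm Ex}(\pi)$ has codimension at least two, so $B_Y = F_Y = 0$ and the identity is automatic. In either case I then choose $y \in \pi^{-1}(x)$ in a deepest stratum of $\lfloor B_Y + \pi_*^{-1}B\rfloor$ lying over $x$, so that the regional fundamental group at $y$ detects as many quasi-étale covers as possible.

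The next step is to lift covers. Fix any Galois quasi-étale cover $p\colon X^\flat \to X$ realizing a finite quotient $G$ of $\pi_1^{\rm reg}(X,B+M;x)$, so that $p^*(K_X+B+M) = K_{X^\flat} + B_{X^\flat} + M_{X^\flat}$ with $B_{X^\flat}\geq 0$. Let $p_Y\colon Y^\flat \to Y$ be the normalized fiber product of $p$ with $\pi$. Pulling the crepant identity back through $p_Y$ yields
\[
p_Y^*(K_Y + B_Y + \pi_*^{-1}B + M_Y) = K_{Y^\flat} + B_{Y^\flat} + M_{Y^\flat} + p_Y^* F_Y,
\]
and the right-hand boundary is effective because $B_{X^\flat}\geq 0$ and $F_Y\geq 0$. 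Hence $p_Y$ is a Galois quasi-étale cover of the generalized klt pair $(Y, B_Y + \pi_*^{-1}B + M_Y)$, and the connected component of $p_Y$ through a chosen preimage of $y$ contributes an element of $\pi_1^{\rm reg}(Y, B_Y + \pi_*^{-1}B + M_Y; y)$ whose image under $\pi_*$ generates $G$. Thus $\pi_*$ is surjective modulo the defect caused by $Y^\flat$ possibly decomposing above $y$.

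The hard part, and the last step, is to bound this decomposition defect by a constant that depends only on $n$. Here I appeal to Theorem~\ref{thm:reg-fundamental}, which gives a short exact sequence $1 \to A \to \pi_1^{\rm reg}(X,B+M;x) \to N \to 1$ with $A$ abelian of bounded rank and $|N|\leq c(n)$. The quasi-étale covers corresponding to $A$ are governed by torsion in the local class group at $x$; since $\pi$ is an isomorphism in codimension one off ${\rm Ex}(\pi)$, these cyclic covers pull back connectedly to $Y$ near the chosen stratum point $y$ and thus lift into $\pi_1^{\rm reg}(Y, B_Y + \pi_*^{-1}B + M_Y; y)$. Only the finite non-abelian quotient $N$ can obstruct the lifting, yielding $|{\rm coker}(\pi_*)| \leq |N| \leq c(n)$. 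The small-morphism case is immediate since $B_Y = F_Y = 0$, and purity of branch locus makes $p_Y$ quasi-étale on $Y$ itself with no auxiliary boundary needed.
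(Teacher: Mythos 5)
Your lifting argument breaks at the crucial step. With your crepant identity, the boundary that actually appears when writing
\[
p_Y^*(K_Y + B_Y + \pi_*^{-1}B + M_Y) = K_{Y^\flat} + (\text{boundary}) + M_{Y^\flat}
\]
is $B_{Y^\flat}+p_Y^*F_Y$, where $B_{Y^\flat}$ is the crepant boundary of $(X^\flat,B_{X^\flat}+M_{X^\flat})$ on $Y^\flat$. If a $\pi$-exceptional prime divisor $E_i$ is dominated by a prime $E_i^\flat$ on $Y^\flat$ with $p_Y$-ramification index $r_i$, the log-discrepancy formula for finite maps gives
\[
a_{E_i^\flat}(X^\flat,B_{X^\flat}+M_{X^\flat})=r_i\,a_{E_i}(X,B+M),
\]
so the coefficient of $E_i^\flat$ in $B_{Y^\flat}+p_Y^*F_Y$ is $1-r_i a_{E_i}(X,B+M)$ when $a_{E_i}\le 1$, and $1-r_i$ when $a_{E_i}>1$. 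Both are negative as soon as $r_i\ge 2$ (resp.\ $r_i>1/a_{E_i}$). The claim that the boundary stays effective ``because $B_{X^\flat}\geq 0$ and $F_Y\geq 0$'' ignores this multiplication by the ramification index: $p_Y$ is not a quasi-\'etale cover of the generalized pair $(Y,B_Y+\pi_*^{-1}B+M_Y)$, so it does not give an element of $\pi_1^{\rm reg}(Y,B_Y+\pi_*^{-1}B+M_Y;y)$, and the rest of the argument collapses.

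A concrete failure: take $(X;x)=(\cc^2/(\zz/r);0)$ with the diagonal $\zz/r$-action, $B=M=0$, and let $\pi\colon Y\to X$ be the minimal resolution with exceptional curve $E\simeq\pp^1$. Then $a_E(X)=2/r$, so your recipe gives $B_Y=(1-2/r)E$ and $F_Y=0$. The universal quasi-\'etale cover $\cc^2\to X$ pulls back to ${\rm Bl}_0\cc^2\to Y$, which is totally ramified along $E$ with index $r$, and the pullback boundary coefficient is $1-r\cdot(2/r)=-1<0$. In fact $\pi_1^{\rm reg}(Y,(1-2/r)E;y)$ is generically trivial, while $\pi_1^{\rm reg}(X;x)=\zz/r$; the theorem holds here only with a boundary such as $(1-1/r)E$ whose coefficient encodes the orbifold (ramification, or local Cartier index) data rather than the log discrepancy. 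Bounding these ramification indices a priori is essentially the theorem itself, so no amount of crepant bookkeeping of log discrepancies through a fiber product will close the loop; one needs a structural input about the exceptional divisors (the route taken in the cited source, via a carefully chosen blow-up and its orbifold structure). A secondary issue worth noting: all coefficients of $B_Y$ and of $\pi_*^{-1}B$ are strictly less than $1$ by klt-ness, so $\lfloor B_Y+\pi_*^{-1}B\rfloor=0$ and the ``deepest stratum'' prescription does not actually determine $y$.
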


The following lemma asserts that over a generalized klt singularity every finite abelian quasi-torsor is computed by index one covers of $\qq$-Cartier divisors (cf.~\cite[Proposition 4.10]{BM21}).
For the definition of abelian quasi-torsor, we refer the reader to~\cite[Definition 4.9]{BM21}. 
In this article, the only abelian quasi-torsors we will deal with are universal covers of singularities with abelian fundamental group.

\begin{lemma}\label{lem:index} 
Let $(X,B+M;x)$ be a generalized klt singularity.
Let $Y\rightarrow X$ be an abelian quasi-torsor.
Then, there exists $\qq$-Cartier divisors $D_1,\dots,D_k$ on $X$ through $x$, satisfying the following.
Let $K$ be the group that the $D_i$'s generate in ${\rm Cl}(X,x)$. 
Then, we have that 
\[
Y \simeq {\rm Spec}\left( \bigoplus_{D\in K} H^0(X,\mathcal{O}_X(D)) \right)
\] 
\end{lemma}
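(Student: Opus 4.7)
My plan is to recover the abelian cover from its character eigendecomposition and then identify each eigenspace with a reflexive sheaf of a $\qq$-Cartier divisor.

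Let $\pi\colon Y\rightarrow X$ be the abelian quasi-torsor with Galois group $A$. Since $A$ is finite abelian and acts faithfully on $Y$ with $\pi$ étale in codimension one, the pushforward $\pi_*\mathcal{O}_Y$ decomposes as an isotypic direct sum
\[
\pi_*\mathcal{O}_Y \;=\; \bigoplus_{\chi\in \widehat{A}} \mathcal{L}_\chi,
\]
where $\mathcal{L}_\chi$ is the $\chi$-eigensheaf under the $A$-action and $\mathcal{L}_0=\mathcal{O}_X$. Since $\pi$ is étale on an open set $U\subseteq X$ with $\codim(X\setminus U)\geq 2$, each $\mathcal{L}_\chi|_U$ is an invertible sheaf. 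As $Y$ is normal, $\pi_*\mathcal{O}_Y$ is a reflexive $\mathcal{O}_X$-module, so each $\mathcal{L}_\chi$ is a rank-one reflexive sheaf on $X$ and therefore corresponds to a Weil divisor class $D_\chi\in{\rm Cl}(X,x)$ with $\mathcal{L}_\chi=\mathcal{O}_X(D_\chi)$.

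The algebra structure on $\pi_*\mathcal{O}_Y$ restricts to maps $\mathcal{L}_\chi\otimes\mathcal{L}_\psi\to \mathcal{L}_{\chi+\psi}$, which are isomorphisms on $U$ and therefore reflexive isomorphisms. In particular the class map $\chi\mapsto D_\chi$ is a homomorphism from $\widehat{A}$ into ${\rm Cl}(X,x)$, and since $\pi$ is Galois and $Y$ connected, this homomorphism is injective. Taking $k=|A|$ gives $\mathcal{L}_\chi^{[k]}\cong \mathcal{L}_0=\mathcal{O}_X$, so $kD_\chi$ is principal and each $D_\chi$ is $\qq$-Cartier through $x$. Choose $\chi_1,\dots,\chi_k\in\widehat{A}$ generating the dual group and set $D_i:=D_{\chi_i}$; then the subgroup $K\leq {\rm Cl}(X,x)$ generated by $D_1,\dots,D_k$ coincides with the image of $\widehat{A}$ under $\chi\mapsto D_\chi$.

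To conclude, the tautological identification of $Y$ with ${\rm Spec}_X(\pi_*\mathcal{O}_Y)$ together with the decomposition above gives
\[
Y\;\simeq\;{\rm Spec}\!\left(\bigoplus_{D\in K} H^0(X,\mathcal{O}_X(D))\right),
\]
where the $\mathcal{O}_X$-algebra structure on the right is the one induced by the reflexive multiplication of divisorial sheaves. The main technical points are (i) ensuring the multiplication maps $\mathcal{L}_\chi\otimes\mathcal{L}_\psi\to \mathcal{L}_{\chi+\psi}$ extend across the codimension-two locus where $\pi$ fails to be étale, which is automatic from reflexivity on the normal variety $X$, and (ii) verifying that $\chi\mapsto D_\chi$ is injective. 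The second point is the one I expect to require the most care: one argues that if some non-trivial character gave the trivial Weil divisor class, then the associated rank-one eigensheaf would be isomorphic to $\mathcal{O}_X$, splitting off an $A$-subrepresentation; this contradicts the connectedness of $Y$. The klt hypothesis enters implicitly to guarantee that the quasi-étale cover $Y$ is itself normal and klt (via~\cite[Proposition 2.11]{Mor21a}) so that the reflexive hull formalism applies, and that the framework of \cite[Definition 4.9]{BM21} is available.
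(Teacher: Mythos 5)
The paper does not reprove this lemma---it is stated with a pointer to \cite[Proposition~4.10]{BM21}---so there is no in-text argument to compare against, but your eigensheaf decomposition is the standard device for abelian quasi-\'etale covers and is certainly the intended approach. Decomposing $\pi_*\mathcal{O}_Y=\bigoplus_{\chi}\mathcal{L}_\chi$, identifying each $\mathcal{L}_\chi$ as a rank-one reflexive sheaf on the normal $X$ (hence a Weil class $D_\chi\in\mathrm{Cl}(X,x)$), obtaining the homomorphism property of $\chi\mapsto D_\chi$ from the reflexive multiplication maps, and deducing $\qq$-Cartierness from $\mathcal{L}_\chi^{[m]}\cong\mathcal{O}_X$ for $m=\mathrm{ord}(\chi)$ are all correct and cleanly argued.

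The step that is stated too loosely is precisely the one you flag, the injectivity of $\chi\mapsto D_\chi$. The assertion that $\mathcal{L}_\chi\cong\mathcal{O}_X$ would ``split off an $A$-subrepresentation, contradicting connectedness of $Y$'' is not by itself an argument: every $\mathcal{L}_\chi$ is already a direct $A$-summand, and triviality of the underlying module does not force a disconnection. For instance, $z\mapsto z^2$ on $\mathbb{A}^1\setminus\{0\}$ is a connected $\zz/2$-quasi-torsor whose anti-invariant eigensheaf is the trivial line bundle. What rescues the claim here is the algebra structure combined with the germ nature of the statement. If $s$ trivializes $\mathcal{L}_\chi$, then $s^m=u\in\mathcal{O}_{X,x}^*$; because the lemma lives over an \'etale (hence, in the limit, henselian) neighborhood of $x$ with residue field $\kk\supset\mu_m$, the unit $u$ has an $m$-th root $v$, so $s/v$ is an $m$-th root of unity in $\pi_*\mathcal{O}_Y$ not lying in $\kk$, which yields a nontrivial idempotent and disconnects $Y$. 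So injectivity genuinely uses the local/henselian setting, not connectedness alone, and the argument should make that explicit; the klt hypothesis, as you note, plays only the ancillary role of keeping the cover normal and within the framework of \cite[Definition~4.9]{BM21}.
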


We finish this subsection by pointing out that 
the conjecture only depends on the formal equivalence class of the germ of the singularity (see, e.g.,~\cite[Definition 2.5]{Mor20c}).

\begin{proposition}\label{prop:germ-change}
Let $(X,B+M;x)$ and $(X',B'+M';x')$ be two generalized klt singularity which are formally equivalent.
Then, we have that 
\[
\pi_1^{\rm reg}(X,B+M;x) \simeq \pi_1^{\rm reg}(X',B'+M';x'),
\] 
\[
m={\rm mld}(X,B+M;x) = {\rm mld}(X',B'+M';x')
\] 
and the number of exceptional divisors over $X$ with center $x$ and log discrepancy in $[m,m+\delta)$
equals the number of exceptional divisors over $X'$ with center
$x'$ and log discrepancy in $[m,m+\delta)$.
\end{proposition}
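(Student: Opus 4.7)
The plan is to observe that all three invariants in the statement (the regional fundamental group, the mld at $x$, and the number of divisors with log discrepancy in $[m,m+\delta)$) depend only on the data $(\widehat{\mathcal{O}}_{X,x}, \widehat{B}, \widehat{M})$ of the completed local ring together with the induced completion of the boundary and of (a descending model for) the b-divisor. Formal equivalence of $(X,B+M;x)$ with $(X',B'+M';x')$ by definition provides an isomorphism between these formal-local packages, so it suffices to check that each of the three quantities is a formal invariant.

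I would start with the log discrepancies. Let $\pi\colon Y\to X$ be a log resolution on which $M$ descends, and let $E\subset Y$ be a prime divisor with $c_X(E)=x$. Base-changing $\pi$ along $\Spec\widehat{\mathcal{O}}_{X,x}\to X$ gives a model $\widehat{Y}\to \Spec\widehat{\mathcal{O}}_{X,x}$ on which the pullbacks of $B_Y$ and $M_Y$ are determined purely by the formal data. Since the coefficient of $E$ in $\Delta_Y$ is the same computed before or after formal completion, the log discrepancy $a_E(X,B+M)$ is a formal invariant. Conversely, every divisorial valuation of the fraction field of $\widehat{\mathcal{O}}_{X,x}$ centered at the closed point comes (uniquely) from a divisorial valuation on $X$ centered at $x$, since such valuations are realized on suitable proper birational models that are themselves formal invariants. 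Applying this to both $(X,B+M;x)$ and $(X',B'+M';x')$, the formal isomorphism induces a bijection between their sets of divisorial valuations centered at the respective points, preserving log discrepancies; this yields both the equality $m=\mld(X',B'+M';x')$ and the equality of the cardinalities of $\{E\mid a_E\in[m,m+\delta)\}$.

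For the regional fundamental group, recall that $\pi_1^{\rm reg}(X,B+M;x)$ is the inverse limit of $\pi_1^{\rm reg}(U,B_U+M_U)$ over étale neighborhoods $U$ of $x$. By Artin approximation, two varieties with isomorphic completed local rings admit a common étale neighborhood cofinal in the systems of étale neighborhoods of $x$ and of $x'$, and under this identification the $\qq$-Cartier data descending from $(B,M)$ corresponds to that of $(B',M')$. A Galois quasi-\'etale cover $p_i\colon X_i\to X$ with $p_i^*(K_X+B+M)=K_{X_i}+B_{X_i}+M_{X_i}$, $B_{X_i}\geq 0$, pulls back to a cover of any such étale neighborhood; since both the quasi-\'etale property and the numerical condition on the pullback are formal-local, the projective systems indexing $\pi_1^{\rm reg}$ on the two sides match under the formal equivalence. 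Taking inverse limits gives the desired isomorphism of regional fundamental groups.

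The main obstacle is making precise the matching of Galois quasi-\'etale covers respecting the generalized pair structure across the formal-to-\'etale passage: one needs that effectivity of $B_{X_i}$ and the identity $p_i^*(K_X+B+M)=K_{X_i}+B_{X_i}+M_{X_i}$ are preserved under algebraization of covers of the completion to covers of a common étale neighborhood. This should be routine using that $\qq$-Cartierness, effectivity, and the trace of a b-divisor on a fixed model are all invariant under faithfully flat base change along $\widehat{\mathcal{O}}_{X,x}\to \mathcal{O}_{X,x}$ and under Artin approximation, but it is the one step that must be spelled out carefully rather than invoked as folklore.
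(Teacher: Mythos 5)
Your proposal is correct and takes essentially the same route as the paper, whose proof consists of two assertions: the isomorphism of regional fundamental groups follows from the \'etale-local definition of $\pi_1^{\rm reg}$, and the remaining equalities follow from the fact that log discrepancies can be computed in the formal completion of $X$ at $x$. You have fleshed out precisely these two one-line claims — the Artin-approximation argument giving cofinal common \'etale neighborhoods, and the base-change argument along $\Spec\widehat{\mathcal{O}}_{X,x}\to X$ showing discrepancies and the set of divisorial valuations centered at $x$ are formal invariants — so this is the same argument, just made explicit.
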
 

\begin{proof}
The first equality follows from the definition of the regional fundamental group.
The second equality and the last statement follow from the fact that the log discrepancies can be computed in the formal completion of $X$ at $x$.
\end{proof}

\subsection{Toric Singularities} 

In this subsection, we recall some basics about toric singularities. For the central results in toric geometry, we refer the reader to~\cite{CLS11,Ful93}.

\begin{definition}
{\em 
Let $N_\zz$ be a free finitely generated abelian group of rank $n$
Let $N_\qq$ be the associated $\qq$-vector space.
We denote by $M_\zz$ (resp. $M_\qq$) the dual of $N_\zz$ (resp. $N_\qq)$.
Let $\sigma \subset N_\qq$ be a strongly rational polyhedral cone. 
Assume that $\sigma$ is full-dimensional.
Then, the spectrum 
\[
X(\sigma):= 
{\rm Spec}(\kk[\sigma^\vee \cap M]) 
\] 
is called the {\rm affine toric variety} associated to $\sigma$.
The affine variety $X(\sigma)$ is endowed with a $n$-dimensional torus action ${\rm Spec}(\kk[M])$.
It has a unique fixed point for the torus action. 
Furthermore, any affine variety of dimension $n$
with an effective action of a $n$-dimensional torus
and a unique fixed point, is equivariantly isomorphic to $X(\sigma)$ for certain $\sigma\subset N_\qq$.
The prime torus invariant divisors of $X(\sigma)$ are in one-to-one correspondence with the rays of $\sigma$.
The set of rays of $\sigma$ is usually denoted by $\sigma(1)$.
The rank of the local Picard of $X(\sigma)$ at the fixed point $x$ equals the cardinality of $\sigma(1)$ minus $n$.
A divisor $B$ on $X(\sigma)$ through the fixed point, 
is said to be a {\rm torus invariant boundary} if it is supported on the union of the prime torus invariant divisors.
We say that $(X;x)$ is a {\em toric singularity}
if it is formally isomorphic to $(X(\sigma);x_0)$,
where $X(\sigma)$ is an affine toric variety and
$x_0$ is the unique fixed point.
}
\end{definition}

\begin{definition}
{\em 
An {\em affine generalized toric pair}
is a generalized pair $(X(\sigma),\Delta+M)$,
where $X(\sigma)$ is an affine toric variety
and $M$ is a b-nef toric divisor, i.e., 
there is a projective birational toric morphism $Y\rightarrow X(\sigma)$ where $M$ descends and equals a torus invariant divisor on $Y$.
A {\em generalized formally toric singularity} 
is a generalized singularity $(X,\Delta+M;x)$ which is formally isomorphic to an affine generalized toric pair at the unique torus invariant point.
}
\end{definition}

\begin{definition}
{\em
Let $\sigma \subset N_\qq$ be a rational polyhedral cone.
Let $(X(\sigma),\Delta;x)$ be a toric pair.
For each $\rho\in \sigma(1)$, we denote by $D_\rho$
the corresponding prime torus invariant divisor of $X(\sigma)$.
We denote by $n_\rho$ the largest positive integer for which
\[
1-\frac{1}{n_\rho} \leq {\rm coeff}_{D_\rho}(\Delta).
\] 
We call $\rho$ the {\em integral generator} of the ray $\qq_{\geq 0}\rho$.
The element $n_\rho^{-1}\rho \in N_\qq$ is called the {\em orbifold generator} of the ray $\qq_{\geq 0}\rho$.
The orbifold structure depends on the boundary $\Delta$.
We denote by $N_{\zz,\Delta}$, called the {\em orbifold lattice}, the lattice generated by $N$ and the $n_{\rho}^{-1}\rho$ for each $\rho \in \sigma(1)$.
}
\end{definition}

The following proposition is well known 
and it follows from the existence of toric log resolutions.

\begin{proposition}\label{prop:toric-mld}
Let $(X(\sigma),\Delta;x)$ be a toric singularity. 
For each $\rho \in \sigma(1)$, we denote by $D_\rho$ the corresponding prime torus invariant divisor on $X(\sigma)$.
Let $L_\Delta$ be the unique linear function on $N_\qq$
which takes the value $1-{\rm coeff}_{D_\rho}(\Delta)$ at $n_\rho^{-1}\rho$.
Then, we have that 
\[
{\rm mld}_x(X(\sigma),\Delta;x) = {\rm min}\{ 
L_\Delta(u) \mid u\in {\rm relint}(\sigma)^\vee \cap N_\zz 
\}.
\] 
\end{proposition}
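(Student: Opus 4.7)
The plan is to combine two standard facts from toric geometry: first, the explicit computation of the log discrepancy of a torus-invariant divisorial valuation in terms of a single linear function on $N_\qq$, and second, the principle that minimal log discrepancies of torus-invariant singularities are achieved by torus-invariant valuations. Throughout, I identify primitive vectors $v \in \sigma \cap N_\zz$ with torus-invariant divisorial valuations $E_v$ in the usual way.

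First I would show that for any primitive $v \in \sigma \cap N_\zz$, one has $a_{E_v}(X(\sigma),\Delta) = L_\Delta(v)$, and that $c_{X(\sigma)}(E_v) = x$ if and only if $v \in \operatorname{relint}(\sigma)$. The center statement is immediate from the orbit-cone correspondence. For the log discrepancy, perform the star subdivision of $\sigma$ along $v$, producing a projective birational toric morphism $\pi \colon Y \to X(\sigma)$ that extracts $E_v$; writing $K_Y + \Delta_Y = \pi^*(K_X + \Delta)$ and comparing coefficients on torus-invariant divisors reduces the computation to a linear interpolation that matches the defining conditions of $L_\Delta$ at the orbifold generators $n_\rho^{-1}\rho$ of each ray, so the coefficient of $E_v$ is forced to equal $L_\Delta(v)$ by uniqueness of the linear extension.

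The second step is the reduction to toric valuations. Given any prime divisor $F$ over $X(\sigma)$ with $c_X(F) = x$, the torus action sweeps out a family of equivalent divisorial valuations with the same log discrepancy against $(X(\sigma),\Delta)$, since $\Delta$ is torus-invariant. Taking the flat limit of this family produces a torus-invariant valuation whose log discrepancy is no larger than $a_F(X(\sigma),\Delta)$ and whose center is still the fixed point; this is the standard torus-equivariance argument used for toric mld's (see, e.g.,~\cite{Amb09}). It follows that the infimum of log discrepancies of divisors over $X(\sigma)$ with center $x$ equals the infimum over torus-invariant such divisors, namely the infimum of $L_\Delta(v)$ over primitive $v \in \operatorname{relint}(\sigma)\cap N_\zz$.

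To conclude, I observe that $L_\Delta$ is linear and positive on $\operatorname{relint}(\sigma)$ (since $(X(\sigma),\Delta)$ is klt at $x$, equivalently $L_\Delta > 0$ on $\operatorname{relint}(\sigma)$), so the minimum over primitive lattice points in $\operatorname{relint}(\sigma)$ coincides with the minimum over all lattice points in $\operatorname{relint}(\sigma)$: non-primitive $u = kv$ gives $L_\Delta(u) = k L_\Delta(v) \geq L_\Delta(v)$. The main obstacle is really the reduction to toric valuations in step two; the star-subdivision computation of $a_{E_v}$ and the final linearity observation are routine, but the torus-equivariance argument that forbids non-toric valuations from computing a smaller mld is the conceptual input and the step that must be invoked from the toric literature.
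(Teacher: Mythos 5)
The paper gives no actual proof of this proposition---it is cited as ``well known'' and attributed to the existence of toric log resolutions---so there is nothing to compare your argument against. Your three-step outline (compute the log discrepancy of a torus-invariant valuation, reduce an arbitrary valuation to a torus-invariant one by degeneration, reduce from primitive to all interior lattice points by positivity) is the correct shape of argument, and steps two and three are sound.

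Step one, however, contains a genuine gap. You assert, without computation, that the star-subdivision discrepancy formula ``matches the defining conditions of $L_\Delta$ at the orbifold generators $n_\rho^{-1}\rho$,'' so that $a_{E_v}(X(\sigma),\Delta)=L_\Delta(v)$. This is false. The standard toric computation produces the linear function taking the value $1-\operatorname{coeff}_{D_\rho}(\Delta)$ at the \emph{primitive integral} generators $\rho$ of the rays, not at the orbifold generators $n_\rho^{-1}\rho$. Since $L_\Delta(\rho)=n_\rho\bigl(1-\operatorname{coeff}_{D_\rho}(\Delta)\bigr)$, the two linear functions coincide only when every $n_\rho=1$, that is, when every coefficient of $\Delta$ is strictly less than $\tfrac12$. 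Concretely, take $X=\mathbb{A}^2$, $\Delta=\tfrac12 D_1$: the blow-up of the origin has $a_E=\tfrac32$, which equals $\tfrac12\cdot 1+1\cdot 1$ (integral-generator interpolation), whereas $L_\Delta(1,1)=1+1=2$ because $n_{\rho_1}=2$. Your argument as written would therefore prove a false identity. The proposition's use of $n_\rho^{-1}\rho$ (like the stray $\sigma^\vee$ in the displayed formula, which should read $\operatorname{relint}(\sigma)$) is almost certainly a typo for $\rho$. You should correct the interpolation condition to $L_\Delta(\rho)=1-\operatorname{coeff}_{D_\rho}(\Delta)$ before invoking uniqueness of the linear extension; with that correction your three-step proof is the standard argument the paper alludes to, and the rest of your write-up goes through unchanged.
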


\begin{proposition}\label{prop:reg-toric}
Let $(X(\sigma),\Delta;x)$ be a toric singularity.
Then, we have that 
\[
\pi_1^{\rm reg}(X(\sigma),\Delta;x) \simeq N_{\zz,\Delta}/ \langle
\rho  \mid \rho \in  \sigma(1) \rangle. 
\] 
\end{proposition}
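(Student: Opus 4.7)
The plan is to classify Galois quasi-\'etale covers of $(X(\sigma), \Delta; x)$ in toric terms and then read off the formula from the universal such cover. The first step is to show that every Galois quasi-\'etale cover in the system defining $\pi_1^{\rm reg}$ is toric over an \'etale neighborhood of $x$. By Theorem~\ref{thm:reg-fundamental} (combined with the maximal regularity of toric singularities), $\pi_1^{\rm reg}$ is abelian; it therefore suffices to consider abelian covers. Lemma~\ref{lem:index} realizes every abelian quasi-\'etale cover as the index-one cover of a group $K$ of $\qq$-Cartier divisors through $x$. On the toric singularity $(X(\sigma); x)$, every $\qq$-Cartier divisor through $x$ is locally a $\qq$-combination of the prime torus-invariant divisors $D_\rho$, and the associated index-one cover is equivariant for the torus action, hence toric.

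Next, a toric morphism $X(\sigma, N') \to X(\sigma, N_\zz)$ given by a sublattice $N' \subset N_\zz$ of finite index has $\Delta_{X(\sigma, N')}$ with coefficient $1 - k_\rho/n_\rho$ along the divisor over $D_\rho$, where $k_\rho$ is the ramification index (the least positive integer with $k_\rho \rho \in N'$). The effectiveness condition $\Delta_{X(\sigma, N')} \geq 0$ then reads $k_\rho \leq n_\rho$ for every $\rho \in \sigma(1)$. Working in the orbifold lattice $N_{\zz, \Delta}$, where the orbifold generators $n_\rho^{-1}\rho$ are available, this translates into the requirement that the integral primitive ray generator $\rho \in N_\zz$ lie in $N'$ viewed inside $N_{\zz, \Delta}$; so the admissible sublattices are exactly those containing $\langle \rho \mid \rho \in \sigma(1)\rangle$, and the cofinal (minimal) one is $N' = \langle \rho \mid \rho \in \sigma(1)\rangle$ itself.

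Finally, the Galois group of this universal cover is the lattice quotient $N_{\zz, \Delta}/\langle \rho \mid \rho \in \sigma(1)\rangle$ by the standard toric correspondence, and since every cover in the system factors through it, the inverse limit of $\mathrm{Aut}_X(X_i)$ defining $\pi_1^{\rm reg}(X(\sigma), \Delta; x)$ collapses to this quotient. The main obstacle will be the first step: showing that every Galois quasi-\'etale cover of the pair is toric. This uses the abelianness of the toric local fundamental group together with Lemma~\ref{lem:index} and the toric structure of $\qq$-Cartier divisors through the torus-fixed point, with a careful equivariance argument transferring the torus action up each cover.
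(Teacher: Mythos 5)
Your approach is genuinely different from the paper's, which is very short: the author passes to the open toric subvariety $X(\Sigma)$ obtained by deleting all torus-invariant strata of codimension at least two, identifies $\pi_1^{\rm reg}(X(\sigma),\Delta;x)$ with the orbifold fundamental group $\pi_1(X(\Sigma),\Delta(\Sigma))$ for the standard boundary $\Delta(\Sigma)=\sum_\rho(1-1/n_\rho)D_\rho$, and then invokes \cite[Theorem~12.1.10]{CLS11}. You instead try to classify the quasi-\'etale covers directly by sublattices. This is a reasonable alternative strategy in principle, but as written it contains two genuine gaps.

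First, the abelianness claim is not justified. You assert ``By Theorem~\ref{thm:reg-fundamental} (combined with the maximal regularity of toric singularities), $\pi_1^{\rm reg}$ is abelian.'' But Theorem~\ref{thm:reg-fundamental} only produces an extension
\[
1\to A\to \pi_1^{\rm reg}(X,B+M)\to N\to 1
\]
with $A$ abelian of bounded rank and $N$ a finite group of order at most $c(n)$. Nothing in the statement forces $N$ to be trivial or the extension to be abelian, even when the regularity is maximal. So the reduction to abelian covers — which you need before Lemma~\ref{lem:index} becomes applicable — does not follow, and this is the load-bearing step of your first paragraph. Establishing that every quasi-\'etale cover of a toric singularity is torus-equivariant (hence has abelian Galois group) is a known fact but requires its own argument; it is not a corollary of Theorem~\ref{thm:reg-fundamental}. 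This is also a somewhat circular route: you are invoking heavy structural results about general klt singularities to handle the completely explicit toric case.

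Second, the lattice bookkeeping in the middle paragraph does not go through. You correctly derive that the effectivity condition $\Delta_{X(\sigma,N')}\geq 0$ is $k_\rho\leq n_\rho$ for each $\rho$, where $k_\rho$ is the minimal positive integer with $k_\rho\rho\in N'$. But you then assert this ``translates into the requirement that $\rho$ lie in $N'$,'' which is the much stronger condition $k_\rho=1$. When some $n_\rho>1$ the admissible sublattices include lattices strictly smaller than $\langle\rho\mid\rho\in\sigma(1)\rangle_\zz$, so $\langle\rho\rangle_\zz$ is not cofinal (and the system of admissible covers need not even be filtered, since the composite of two covers with $k_\rho\leq n_\rho$ can violate the bound). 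Finally, the deck group of the toric cover $X(\sigma,\langle\rho\rangle_\zz)\to X(\sigma,N_\zz)$ is $N_\zz/\langle\rho\mid\rho\in\sigma(1)\rangle$, not $N_{\zz,\Delta}/\langle\rho\mid\rho\in\sigma(1)\rangle$; the orbifold lattice $N_{\zz,\Delta}$ never enters the cover $X(\sigma,\langle\rho\rangle_\zz)\to X(\sigma,N_\zz)$ at all, so the last sentence simply asserts the desired formula without deriving it. To make a sublattice argument work you would need to carefully package the covers ramifying over $\Delta$ together with the quasi-\'etale ones and show the resulting inverse limit equals $N_{\zz,\Delta}/\langle\rho\rangle$, which is considerably more than is done here.
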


\begin{proof}
Let $Z$ be the union of all the torus invariant strata
of codimension at least two.
Let $\Sigma$ be the fan of the toric variety $X(\sigma)\setminus Z$.
Let 
\[
\Delta(\Sigma) := \sum_{\rho\in \sigma(1)}\left( 
1-\frac{1}{n_\rho} D_\rho \right)|_{X(\Sigma)}
\]
Then, we have that 
\[
\pi_1^{\rm reg}(X(\sigma),\Delta;x) 
\simeq 
\pi_1(X(\Sigma), \Delta(\Sigma)).
\] 
The statement follows from~\cite[Theorem 12.1.10]{CLS11}.
\end{proof}

\begin{notation}
{\em 
Let $\sigma\subset N_\qq$ be a
strongly convex rational polyhedral cone. 
Let $m\in {\rm relint}(\sigma)$.
We denote by $\mathcal{S}(\sigma,m)$ the set of strongly convex rational polyhedral cones which are spanned by a proper subset of the rays of $\sigma$ and contain $m$ in their relative interior.
Note that $\mathcal{S}(\sigma,m)$ could be empty, for instance, if $\sigma$ is a simplicial cone and
$m$ is in its relative interior.}
\end{notation}

\begin{lemma}\label{lem:sub-cones}
Let $\sigma \subset N_\qq$ be a
full-dimensional strongly convex rational polyhedral cone.
Let $m\in {\rm relint}(\sigma)$.
Then, one of the following statements hold:
\begin{enumerate}
    \item $\sigma$ is a simplicial cone, or
    \item there exists $\tau\in \mathcal{S}(\sigma,m)$ which is full-dimensional in $N_\qq$, or
    \item there exists $\tau_1,\tau_2\in \mathcal{S}(\sigma,m)$ so that the span of 
    $\tau_1$ and $\tau_2$, over $\qq$, equals $N_\qq$.
\end{enumerate}
\end{lemma}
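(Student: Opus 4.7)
My plan is to dispose of case (1) trivially and, in the non-simplicial case, always produce case (3) by perturbing a positive representation of $m$ along a line in coefficient space.

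If $\sigma$ is simplicial we are in case (1), so assume $N := |\sigma(1)| > n$. The natural object to study is the polytope of non-negative representations,
\[
\bar C := \Bigl\{ c \in \qq_{\geq 0}^{\sigma(1)} : \sum_{\rho \in \sigma(1)} c_\rho \rho = m \Bigr\}.
\]
Since $m \in \relint(\sigma)$, some $c^0 \in \bar C$ has $c^0_\rho > 0$ for every $\rho$, and because the rays span $N_\qq$ the affine hull of $\bar C$ has dimension $N - n \geq 1$. Choosing $u \in \relint(\sigma^\vee)$, the bound $c_\rho \langle u, \rho\rangle \leq \langle u, m\rangle$ shows $\bar C$ is a bounded polytope in which $c^0$ lies in the relative interior.

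I would then fix a nonzero vector $d$ in the tangent space of the affine hull of $\bar C$---equivalently, a nontrivial relation $\sum_\rho d_\rho \rho = 0$, available because the rays are linearly dependent---and let $t_- < 0 < t_+$ be the first values at which $c^0 + t d$ leaves $\bar C$. At the endpoint $c^\pm := c^0 + t_\pm d$, a vanishing coordinate forces $d_\rho = -c^0_\rho/t_\pm$ to have sign opposite to $t_\pm$; hence the zero-sets of $c^+$ and $c^-$ are disjoint, and each is nonempty because we are on the boundary. The cones $\tau_\pm := \operatorname{cone}(\supp(c^\pm))$ are strongly convex rational polyhedral subcones of $\sigma$ spanned by the proper subsets $\supp(c^\pm) \subsetneq \sigma(1)$, and $m$ lies in their relative interiors because $m = \sum_\rho c^\pm_\rho \rho$ with $c^\pm_\rho > 0$ precisely on $\supp(c^\pm)$. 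Thus $\tau_\pm \in \mathcal{S}(\sigma, m)$, and disjointness yields $\supp(c^+) \cup \supp(c^-) = \sigma(1)$, so $\operatorname{span}(\tau_+) + \operatorname{span}(\tau_-) = N_\qq$, giving case (3).

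The main delicacy, and the only place the argument could fail, is the sign observation: without disjointness of the zero-sets, the two endpoints could share a dropped ray, $\supp(c^+) \cup \supp(c^-)$ would miss some $\rho \in \sigma(1)$, and the spans might fail to cover $N_\qq$. This depends on picking $c^0$ strictly positive (ensured by $m \in \relint(\sigma)$) and $d$ nonzero (ensured by $\sigma$ non-simplicial). Boundedness of $\bar C$ and the positive dimension of its affine hull are routine.
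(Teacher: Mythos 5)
Your argument is correct, and it takes a genuinely different route from the paper's proof. The paper proceeds by a double induction on $(\dim N_\qq, |\sigma(1)|)$: it first establishes the auxiliary claim that every ray $\rho\in\sigma(1)$ lies on some $\tau\in\mathcal{S}(\sigma,m)$ (found by pushing $m$ along $-\rho$ into a proper face $\tau_0$ and invoking the inductive hypothesis on $\tau_0$ when $\tau_0$ is not simplicial), and then iteratively merges cones from $\mathcal{S}(\sigma,m)$ until their spans cover $N_\qq$. You instead work in the representation polytope $\bar C$: starting from a strictly positive representation $c^0$ of $m$ (which exists because $m\in\relint(\sigma)$), you perturb along a nonzero relation $d$ among the rays and walk to the two boundary points $c^\pm$; the key sign observation that $Z^+\subseteq\{d_\rho<0\}$ and $Z^-\subseteq\{d_\rho>0\}$ forces the dropped-ray sets to be disjoint, so $\supp(c^+)\cup\supp(c^-)=\sigma(1)$ and the two supports span $N_\qq$. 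Your construction always lands in case~(3) when $\sigma$ is not simplicial, which is fine since the alternatives are not exclusive, and the fact that $m$ is a strictly positive combination of the extremal rays $\supp(c^\pm)$ of $\tau_\pm$ does indeed place $m$ in $\relint(\tau_\pm)$. The trade-off: your argument is shorter, avoids the nested induction entirely, and makes the mechanism transparent via elementary convex geometry; the paper's argument, though more laborious, proves along the way the intermediate claim that every extremal ray of $\sigma$ is contained in some member of $\mathcal{S}(\sigma,m)$, which is slightly finer information than the lemma itself requires.
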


\begin{proof}
Let $n$ be the dimension of $N_\qq$ over $\qq$.
We assume that $\sigma$ is not simplicial.
Let $\rho \in \sigma(1)$.
We claim that there exists $\tau \in \mathcal{S}(\sigma,m)$
for which $\rho \in \tau$.
In particular, $\mathcal{S}(\sigma,m)$ is non-empty.
Indeed, for some $\lambda>0$, we have that
$m-\lambda \rho$ lies in the boundary of $\sigma$.
In particular, $m-\lambda \rho$ is in the relative interior
of a proper face $\tau_0$ of $\sigma$.
Note that $\rho$ is not in $\tau_0$, otherwise, we would have that $m\in \tau_0$, contradicting the fact that $m\in {\rm relint}(\sigma)$.
Let $\rho_1,\dots,\rho_k$ be the extremal rays of $\tau_0$.
First, assume that $\tau_0$ is a simplicial cone.
Then, $k$ must be at most $n-1$.
Since $\sigma$ has at least $n+1$ extremal rays, 
then the cone
\[
\tau:=\langle \rho, \rho_1,\dots,\rho_k\rangle_{\qq_{\geq 0}}
\] 
is contained in the set $\mathcal{S}(\sigma,m)$
and $\rho \in \tau$.
On the other hand, assume that $\tau_0$ is not a simplicial cone.
By induction on the dimension, we know that 
\[
\mathcal{S}(\tau_0,m-\lambda \rho) \neq \emptyset.
\]
Then, up to reordering, there exists $\{\rho_1,\dots,\rho_j\}\subsetneq \tau_0(1)$, with $j<k$, for which 
\[
m-\lambda \rho \in {\rm relint}(\langle \rho_1,\dots,\rho_j\rangle_{\qq_{\geq 0}}).
\] 
We conclude that $m\in {\rm relint}(\tau)$, 
where $\tau:=\langle \rho,\rho_1,\dots,\rho_j\rangle_{\qq_{\geq 0}}$.
Hence, the cone $\tau$ is contained in the set $\mathcal{S}(\sigma,m)$ and contains $\rho$. 
This shows the claim.

Assume that no $\tau\in \mathcal{S}(\sigma,m)$ is full-dimensional in $N_\qq$.
Let $\tau_1 \in \mathcal{S}(\sigma,m)$.
By assumption, $\tau_1$ is not full-dimensional.
Let $\rho\in \sigma(1)$ so that $\rho$ is not in the $\qq$-span of $\tau_1$.
By the claim, we can find $\tau_2 \in \mathcal{S}(\sigma,m)$ so that 
$\rho \in \tau_2$.
If the span of $\tau_1$ and $\tau_2$ over $\qq$ equals $N_\qq$, then we are done.
Otherwise, we replace $\tau_1$ with $\langle \tau_1,\tau_2\rangle_{\qq_{\geq 0}}$.
Observe that 
\[
\langle \tau_1, \tau_2\rangle_{\qq_{\geq 0}}\in
\mathcal{S}(\sigma,m).
\]
Furthermore, we have that 
\[
\dim (\langle \tau_1,\tau_2\rangle_{{\qq}_{\geq 0}}) > \dim(\tau_1).  
\]
Proceeding inductively, we produce $\tau_2$ as desired.  
Observe that this process must stop as the dimension of $\tau_1$ increases by at least one in each iteration.
\end{proof}

\section{Boundedness of the regional fundamental group} 

In this section, we restate the conjecture of the boundedness of the regional fundamental group in the setting of generalized pairs.
We prove the conjecture in the case of
generalized
toric singularities,
quotient singularities,
isolated 
$3$-fold singularities, and 
exceptional singularities.

\begin{conj}\label{conj:gen-boundedness}
Let $n$ and $N$
be two positive integers.
Let $0<\epsilon$ and $0<\delta<1$
be two real numbers.
There exists a constant
$\rho:=\rho(n,N,\epsilon,\delta)$,
only depending on the variables
$n,N,\epsilon$ and $\delta$,
which satisfies the following.
Let $(X,\Delta+M;x)$ be a $n$-dimensional
generalized klt singularity.
Assume the following statements hold:
\begin{enumerate}
    \item The inequality
    \[
    {\rm mld}(X,\Delta+M;x)>\epsilon
    \] 
    is satisfied, and 
    \item there are at most $N$ prime divisors $E$ over $X$ for which
    $c_E(X)=x$ and 
    \[
    a_E(X,\Delta+M;x) \in 
    [
    {\rm mld}(X,\Delta+M;x), 
    {\rm mld}(X,\Delta+M;x)+\delta 
    ).
    \]
\end{enumerate}
Then, the order of the regional fundamental group of $(X,\Delta+M;x)$ is bounded above by $\rho$, i.e., we have that
$|\pi_1^{\rm reg}(X,\Delta+M;x)|\leq \rho$.
\end{conj}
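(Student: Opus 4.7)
The statement is a conjecture rather than a theorem, and I do not see a route to it in full generality; my proposal is therefore to treat it case by case, corresponding to the four classes listed in Theorem~\ref{introthm:boundedness}. The common thread is to translate the two quantitative hypotheses (mld bounded from below by $\epsilon$, and at most $N$ valuations landing in the short window $[m,m+\delta)$) into a bound on some combinatorial or birational invariant that in turn controls $|\pi_1^{\rm reg}(X,\Delta+M;x)|$. Throughout, Lemma~\ref{lem:from-gen-to-pair} should let me reduce from a generalized klt singularity to a klt pair at the cost of slightly shrinking $\delta$, so I can assume $M=0$.

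For the toric case I would use Proposition~\ref{prop:reg-toric}: the regional fundamental group is $N_{\zz,\Delta}/\langle \rho : \rho\in\sigma(1)\rangle$, so it suffices to bound the index of $\langle \sigma(1)\rangle$ in the orbifold lattice, and hence (after replacing $N$ by $N_{\zz,\Delta}$) the covolume of the cone in its own orbifold lattice. Proposition~\ref{prop:toric-mld} says the mld is the minimum of the $\Delta$-linear form $L_\Delta$ on interior lattice points; hypothesis (1) gives $L_\Delta\geq\epsilon$ on the interior and hypothesis (2) caps the number of interior lattice points where $L_\Delta<m+\delta$. With Lemma~\ref{lem:sub-cones} one can either reduce to a simplicial cone (where the classification via Hermite normal form is direct) or split $\sigma$ as the join of lower-dimensional cones satisfying the analogous lattice constraints; an induction on dimension should then bound the number of lattice points in a bounded-mld ``slab'' of $\sigma$, and hence the covolume.

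For the quotient case I would apply Proposition~\ref{prop:ld-finite-quot} in reverse: a quotient singularity $(X,\Delta;x)$ is covered by a smooth germ (or more generally by a germ whose regional fundamental group is controlled) under a finite Galois cover by the local fundamental group itself. The conjectured bound on downstairs divisorial valuations forces a bound on the order of this cover, because each upstairs valuation descends to a downstairs valuation in the same log discrepancy window (up to a factor coming from $|G|$), and this factor is exactly the quantity I am trying to bound. The main obstacle is to convert the finite-many-divisors hypothesis into a direct bound on $|G|$; I would do this by pairing it with the reverse implication that each orbit of $G$ on near-mld upstairs valuations produces a distinct downstairs valuation.

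For the isolated $3$-fold case the key is Theorem~\ref{thm:reg-fundamental}, which under the regularity stratification reduces bounding $\pi_1^{\rm reg}$ to bounding the rank of its abelian part by $r+1$ and the nonabelian part by $c(n)$. The hypothesis $\delta>1-\epsilon$ combined with Lemma~\ref{lem:controlled-p1} is what I expect to carry the argument: if $\pi_1^{\rm reg}$ had enormous order, then log discrepancies at $x$ would live in a lattice of denominator at least that order, forcing many distinct values in $[m,m+\delta)$ as long as $\delta$ is large enough in terms of $\epsilon$, contradicting hypothesis (2). Finally, the exceptional case follows from the fact that exceptional singularities deform to cones over exceptional Fano varieties~\cite{Mor18c,HLM20}, whose base is bounded, so the regional fundamental group is controlled by the fundamental group of the smooth locus of a bounded Fano, which is finite and uniformly bounded by~\cite{Bra20}. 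The hardest case I expect is the toric one, where the combinatorial bookkeeping that converts the finiteness of near-mld lattice points into a covolume bound is subtle; this is where I anticipate spending most of the technical effort.
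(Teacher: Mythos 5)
You correctly identify that the statement is a conjecture and the paper only establishes it in four special cases, so the right comparison is between your case-by-case sketches and Propositions~\ref{prop:toric-case}--\ref{prop:ex-case}. In each case your high-level aim (convert the two hypotheses into a bound on a combinatorial or birational invariant controlling $\pi_1^{\rm reg}$) matches the paper's, but the mechanisms you propose diverge from the paper's and in several places contain gaps.

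For the toric case, the paper's handling of the simplicial ($\qq$-factorial) case is a compactness argument by contradiction via Lawrence's theorem (Theorem~\ref{thm:real-torus}) on descending chains of closed subgroups of the real torus $T_\rr=\rr^n/\zz^n$, packaged through Lemma~\ref{lem:closed-subgroup}: a divergent sequence $|S_{\sigma_i}|$ is shown to produce arbitrarily many lattice points with log discrepancy near the mld, contradicting hypothesis~(2). Your proposed Hermite-normal-form classification and lattice-point ``slab'' count is a genuinely different route; it is not implausible, but as written it does not explain how a bound on near-mld interior lattice points forces a bound on the covolume index, and the paper's Lawrence-based argument is precisely the nontrivial step that achieves this. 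The reduction from non-simplicial to simplicial cones is also more delicate than your ``join of lower-dimensional cones'': the paper's Claim in Step~3 of Proposition~\ref{prop:toric-case} constructs a specific collection $v_1,\dots,v_n$ with bounded coefficients using Lemma~\ref{lem:sub-cones}, which gives a controlled toric blow-up and then an explicit index estimate in Step~4.

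For the quotient case, your plan to bound $|G|$ directly via the valuation count has a real gap that you yourself flag but do not resolve. A single $G$-orbit of upstairs valuations descends to a single downstairs valuation, so many upstairs valuations can collapse to one, and the count downstairs does not see $|G|$ directly. The paper instead reduces to the toric case through a chain of quotients: first by the (cyclic) stabilizer $G_F$ of a $G$-prime divisor computing the mld, then (via~\cite{Mor20c}) by a normal abelian subgroup of bounded index, landing in a toric quotient to which Proposition~\ref{prop:toric-case} applies; Proposition~\ref{prop:ld-finite-quot} is used only to transport the divisor count along these \emph{bounded}-degree quotients, not to bound $|G|$ outright. For the isolated $3$-fold case, your heuristic based on Lemma~\ref{lem:controlled-p1} runs in the wrong direction: that lemma deduces a denominator bound from a bound on $\pi_1^{\rm reg}$, and the converse (small denominators force many realized valuations in a window) is not automatic. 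The paper instead takes a small $\qq$-factorialization, applies Theorem~\ref{thm:reg-fundamental}, passes to the quasi-\'etale cover with abelian regional fundamental group, and bounds each cyclic factor via Lemma~\ref{lem:index} together with Shokurov's bound~\cite[Lemma~4.4.1]{Sho96} on Cartier indices, using the valuation count to control that bound. For the exceptional case, your deformation-to-cone suggestion is again a different route, and it omits the key use of hypothesis~(1): the paper extracts the unique plt-blow-up divisor, uses boundedness of exceptional Fanos and a low-degree curve to bound the log discrepancy at the corresponding divisor upstairs, and then uses ${\rm mld}>\epsilon$ to bound the ramification index of the universal cover; without tracking $\epsilon$ your argument would not yield a uniform bound.
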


In order to prove the toric case, we will use the following theorem due to Lawrence (cf.~\cite{Law91}).

\begin{theorem}\label{thm:real-torus}
Let $T_\rr=\rr^d/\zz^d$ be the $d$-dimensional real torus.
\begin{enumerate}
    \item Let $U\subset T_\rr$ be an open set.
    Then, the set of closed subgroups of $T_\rr$ not intersecting $U$ has only finitely many maximal elements with respect to the inclusion.
    \item The set of finite unions of closed subgroups of $T_\rr$ satisfies the descending chain condition.
\end{enumerate}
\end{theorem}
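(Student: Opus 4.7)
\emph{Plan.} I would prove both parts together by induction on $d$, with the base $d = 0$ trivial. The structural input is the classical description of closed subgroups of $T_\rr$: every closed subgroup $H$ is a finite disjoint union of translates of its identity component $H^0$, which is a subtorus corresponding to a rational linear subspace of $\rr^d$. Equivalently, Pontryagin duality identifies closed subgroups of $T_\rr$ inclusion-reversingly with subgroups of $\zz^d$, so they satisfy the descending chain condition (since $\zz^d$ is Noetherian).

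For part (1), set $\mathcal{F}_U := \{H \leq T_\rr \text{ closed} : H \cap U = \emptyset\}$. The plan is to equip the space of closed subgroups of $T_\rr$ with the Chabauty topology, which is compact Hausdorff, and to observe that $\mathcal{F}_U$ is closed in this topology (every point in a Chabauty limit is a limit of points from the approximating subgroups, and $U$ is open), hence compact. Assuming for contradiction infinitely many pairwise distinct maximal elements $H_n$, extract a Chabauty accumulation point $H^\infty \in \mathcal{F}_U$. For each large $n$, consider the closed subgroup $K_n$ topologically generated by $H_n \cup H^\infty$. Maximality of $H_n$ forces either $K_n = H_n$, so that $H^\infty \subseteq H_n$---after which a stratification by identity component combined with the induction hypothesis applied to the lower-dimensional quotient $T_\rr / (H^\infty)^0$ pins $H_n$ to finitely many possibilities, contradicting distinctness---or $K_n \cap U \neq \emptyset$, in which case a Chabauty limit argument on a witness sequence produces a point of $H^\infty \cap U$, contradicting $H^\infty \in \mathcal{F}_U$.

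For part (2), given a strictly descending chain $F_1 \supsetneq F_2 \supsetneq \cdots$ of finite unions of closed subgroups, let $\mathcal{M}_i$ be the (finite) set of closed subgroups of $T_\rr$ that are maximal for inclusion in $F_i$, so that $F_i = \bigcup_{H \in \mathcal{M}_i} H$. Each $H \in \mathcal{M}_{i+1}$ is contained in $F_i$, and by Baire category applied to each coset of $H^0$ inside $F_i$---using that $H^0 \cap K$ is either all of $H^0$ or a proper closed subgroup, hence nowhere dense in $H^0$, for each $K \in \mathcal{M}_i$---every coset of $H^0$ lies in a single element of $\mathcal{M}_i$. A pigeonhole/Ramsey passage on the countable lattice of closed subgroups of $T_\rr$ then produces a subchain along which coherent descending chains of genuine closed subgroups $H_1 \supseteq H_2 \supseteq \cdots$ assemble the $F_i$'s; these stabilize by DCC, forcing the $F_i$'s to stabilize as well.

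The main obstacle is the no-accumulation step in part (1): a Chabauty limit of closed subgroups need neither contain nor be contained in the approximating subgroups, and its identity component can differ in dimension from those of the sequence. Making the dichotomy ``$H^\infty \subseteq H_n$ or $K_n \cap U \neq \emptyset$'' effective, and extracting through Chabauty limits a definite witness in $U$ in the second case, is the delicate technical heart of the argument. A secondary obstacle in part (2) is that an irredundant (antichain) decomposition of a finite union of closed subgroups into closed subgroups need not be unique; one must therefore work with the well-defined collection $\mathcal{M}_i$ of maximal closed subgroups and handle the possible non-nesting of components along the chain via the Baire/pigeonhole argument rather than via a naive componentwise passage.
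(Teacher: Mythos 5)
The paper does not prove Theorem~\ref{thm:real-torus}: it is quoted as a known result and attributed to Lawrence~\cite{Law91}, so there is no internal proof to compare against, and your proposal should be read as a blind reconstruction of Lawrence's argument.

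Your structural inputs are the right ones (classification of closed subgroups of $T_\rr$ as a subtorus times a finite group, Pontryagin duality giving DCC for individual closed subgroups from the Noetherianness of $\zz^d$, and Chabauty compactness). However, there is a genuine gap precisely at the point you yourself flag as the ``delicate technical heart''. In Case~2 of part~(1) you claim that witnesses $x_n\in K_n\cap U$, with $K_n=\overline{\langle H_n,H^\infty\rangle}$, can be pushed through a Chabauty limit to produce a point of $H^\infty\cap U$. This does not work: every $K_n$ contains $H^\infty$, and when $H_n\not\subseteq H^\infty$ it contains it strictly; the $K_n$ need not Chabauty-converge to $H^\infty$ (they can be constant and much larger, e.g.\ two distinct one-dimensional subtori already generate a dense subgroup of $T^2$). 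A convergent subsequence of the $x_n$ lands in the Chabauty limit of $K_n$, which in general strictly contains $H^\infty$, so one obtains no contradiction with $H^\infty\in\mathcal F_U$. Some finer argument is required here, for instance passing to a subsequence on which $\dim H_n^0$ is fixed and $H_n^0$ converges in a single Grassmannian, and showing that if the $H_n^0$ fail to stabilize then their Chabauty limit jumps in dimension and must eventually meet $U$. Your Case~1 also silently assumes $(H^\infty)^0$ is nontrivial so that the quotient is lower-dimensional; when $H^\infty$ is finite the reduction is vacuous and you need a separate (but easier) argument that finite $H_n$ Chabauty-converging to a finite $H^\infty$ have bounded order.

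For part~(2) the ``pigeonhole/Ramsey passage on the countable lattice'' is too vague to count as a proof, and the Baire step is actually redundant once you have part~(1): any closed subgroup contained in $F_i$ is automatically contained in one of the finitely many maximal ones (Zorn, using that increasing unions of closed subgroups inside the closed set $T_\rr\setminus U$ have closure still inside it), so one does not need a coset-by-coset argument. A cleaner route is to deduce~(2) formally from~(1) plus the DCC for individual closed subgroups: by~(1) every finite union $F$ of closed subgroups has a canonical finite antichain $\Phi(F)$ of maximal closed subgroups with $F=\bigcup\Phi(F)$; the inclusion $F_{i+1}\subsetneq F_i$ translates into a strict domination of the antichain $\Phi(F_{i+1})$ by $\Phi(F_i)$; and the Dershowitz--Manna multiset ordering theorem (domination between antichains over a well-founded poset is itself well-founded) then yields the descending chain condition. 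As written, your plan leaves this key well-foundedness step unexecuted.
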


Base on Lawrence's theorem, we will prove the following lemma regarding sequences of closed subgroups of the real torus.

\begin{lemma}\label{lem:closed-subgroup}
Let $\{S_i\}_{i\geq 1}$, with $i\geq 1$, be a sequence
of closed subgroups in $T_\rr$.
Then, up to passing to a subsequence, we can find a closed subgroup $S$ of $T_\rr$ so that 
for every $k\geq 1$, we have that
\[
S = \overline{\cup_{i\geq k} S_i}.
\]
\end{lemma}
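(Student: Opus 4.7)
The plan is to combine the descending chain condition implicit in Theorem~\ref{thm:real-torus}(2) with Hausdorff compactness and the finiteness statement of Theorem~\ref{thm:real-torus}(1). First I would argue that, after passing to a subsequence, there is a closed subgroup $H$ of $T_\rr$ such that every term of the refined sequence lies in $H$ and no proper closed subgroup $H'\subsetneq H$ contains $S_i$ for infinitely many $i$. The idea is to refine iteratively: whenever a proper closed subgroup $H'\subsetneq H$ happens to contain infinitely many $S_i$, pass to that subsequence and replace $H$ by $H'$. This process must terminate because closed subgroups of $T_\rr$ satisfy the descending chain condition, which follows from Theorem~\ref{thm:real-torus}(2) applied to single subgroups.

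With $H$ fixed, I would pass to a further subsequence which converges to some closed subset $S^{*}\subseteq H$ in Hausdorff distance; such a subsequence exists because the collection of closed subsets of the compact group $T_\rr$ is Hausdorff-compact, and $S^{*}$ is necessarily a closed subgroup since group operations are continuous. The main step is to prove $S^{*}=H$. Suppose for contradiction that there is $h_0\in H\setminus S^{*}$, and set $\epsilon = d(h_0,S^{*})/2$. For any $n$ with $1/n<\epsilon$, Theorem~\ref{thm:real-torus}(1) applied to the open set $U_n = T_\rr\setminus\overline{N_{1/n}(S^{*})}$ (where $N_{1/n}$ denotes the open $1/n$-neighborhood) yields only finitely many maximal closed subgroups of $T_\rr$ contained in $\overline{N_{1/n}(S^{*})}$. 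Hausdorff convergence forces $S_i\subseteq\overline{N_{1/n}(S^{*})}$ for all large $i$, so by pigeonhole one of those maximal subgroups, call it $H_n$, contains $S_i$ for infinitely many $i$. Then $H\cap H_n$ is a closed subgroup of $T_\rr$ also containing infinitely many $S_i$, and by the minimality of $H$ established in the first step we must have $H\cap H_n=H$, that is, $H\subseteq\overline{N_{1/n}(S^{*})}$. This contradicts $h_0\in H$ being at distance $2\epsilon>1/n$ from $S^{*}$, so $S^{*}=H$.

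Setting $S:=H$, the inclusion $\overline{\bigcup_{i\geq k}S_i}\subseteq S$ is automatic because each $S_i\subseteq H$ and $H$ is closed, while the reverse inclusion $S\subseteq\overline{\bigcup_{i\geq k}S_i}$ follows from Hausdorff convergence: every element of $H$ is a limit of points coming from $S_i$ with $i$ arbitrarily large, hence in particular with $i\geq k$. The main obstacle is exactly the equality $S^{*}=H$, and Theorem~\ref{thm:real-torus}(1) is what prevents the $S_i$ from wandering among infinitely many incomparable small subgroups whose Hausdorff limit is a proper subgroup of $H$ even though none of them individually violates the minimality set up in the first step.
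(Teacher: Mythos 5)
Your proof is correct, but it takes a genuinely different route from the paper's. The paper proceeds directly: it applies Theorem~\ref{thm:real-torus}(1) to the open set $U=T_\rr\setminus\overline{\cup_{i\geq1}S_i}$ to obtain a maximal closed subgroup $S^0$ containing infinitely many $S_i$'s; if $S^0\supsetneq\overline{\cup_{i\geq k}S_i}$, it locates a point and small neighborhood of $S^0$ missed by all $S_i$ with $i\geq k$, applies (1) again to manufacture a strictly smaller candidate $S^1$, and iterates, invoking (2) only at the end to halt the resulting strictly descending chain. You instead front-load the descending chain condition: you first use (2) to pass to a subsequence contained in a closed subgroup $H$ that is minimal among those containing cofinally many $S_i$, then invoke Hausdorff compactness of the space of closed subsets of $T_\rr$ to extract a Hausdorff limit $S^*\subseteq H$, and finally use (1) together with the minimality of $H$ to force $S^*=H$. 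The net effect is the same, but the intermediate objects differ: the paper never mentions the Hausdorff metric, while your argument relies on it to give the limit an unambiguous identity before comparing it to $H$. What your approach buys is conceptual clarity and a cleaner separation of roles for the two parts of Lawrence's theorem (DCC for minimality, finiteness of maximal subgroups for saturation); what it costs is the additional input that the space of closed subsets of a compact metric space is Hausdorff-compact and that a Hausdorff limit of closed subgroups is a subgroup, facts the paper's more hands-on descent avoids. One point worth stating explicitly in your write-up: the reverse inclusion $S\subseteq\overline{\cup_{i\geq k}S_i}$ uses that Hausdorff convergence of the \emph{tail} $\{S_i\}_{i\geq k}$ to $S^*=H$ holds for every $k$, which is automatic since dropping finitely many terms does not change the limit.
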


\begin{proof}
Let $U=T_\rr \setminus \overline{  \cup_{i\geq 1} S_i}$.
By Theorem~\ref{thm:real-torus} (i), there are finitely many maximal closed subgroups which are disjoint from $U$.
Let $S^0$ be one of such maximal closed subgroups
which contains infinitely many of the $S_i$'s.
Up to passing to a subsequence, we may assume that $S_i\subset S^0$
for all $i$ and
$S^0\cap U=\emptyset$.
Hence, for every $k\geq 1$, we have that 
\[
S^0\supseteq \overline{\cup_{i\geq k} S_i}. 
\]
Assume that there exists some $k$ for which the inclusion is strict.
Then, we can find $s\in S^0$ and an open neighborhood $U_s$ of $s$ in $T_\rr$ so that
$U_s\cap S_i=\emptyset$ for all $i\geq k$.
By Theorem~\ref{thm:real-torus} (i), we can find a maximal closed subgroup $S^1$, disjoint from $T_\rr \setminus S \cup U_s$, which contain infinitely many of the $S_i$'s. 
Passing to a subsequence, we may assume that $S^1$ contains all the $S_i$'s. 
Observe that $S^0 \supsetneq S^1$.
If the inclusion 
\[
S^1 \supset \overline{\cup_{i\geq k} S_i}
\] 
is strict for some $k$, then we proceed as above and construct a closed subgroup $S^2$ properly contained in $S^1$. 
Inductively, we obtain a descending sequence of closed subgroups
\[
S^0\supsetneq S^1 \supsetneq S^2 \supsetneq \cdots .
\]
By Theorem~\ref{thm:real-torus} (ii), we conclude that this sequence must stabilize.
Hence, for some closed subgroup $S^j$, and up to passing to a  subsequence of the $S_i$'s, we
have that
\[
S^j = \overline{\cup_{i\geq k} S_i},
\]
for every $k\geq 1$.
\end{proof}

\begin{proposition}\label{prop:toric-case}
Conjecture~\ref{conj:gen-boundedness} holds for generalized formally toric singularities.
\end{proposition}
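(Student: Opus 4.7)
My plan is to reduce to an honest klt toric pair and then to bound the finite abelian group $N_{\mathbb{Z},B}/\langle\sigma(1)\rangle$ using a combinatorial analysis split into simplicial and non-simplicial cases. By Proposition~\ref{prop:germ-change} I may replace $(X,\Delta+M;x)$ with an affine generalized toric pair $(X(\sigma),\Delta+M;x_0)$, where $x_0$ is the torus-fixed point. By Lemma~\ref{lem:from-gen-to-pair} I may absorb the b-nef part $M$ into a klt toric boundary $B\geq\Delta$, replacing $\delta$ by some $\delta'\in(0,\delta)$ while preserving the hypotheses ($\mathrm{mld}>\epsilon$ and the count of $\leq N$ near-mld divisorial valuations). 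Proposition~\ref{prop:reg-toric} then identifies $|\pi_1^{\mathrm{reg}}|$ with $[N_{\mathbb{Z},B}:\langle\sigma(1)\rangle]$, so it suffices to bound this index by a constant $\rho(n,N,\epsilon,\delta')$.

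For the simplicial case, suppose $\sigma$ has rays $\rho_1,\dots,\rho_n$ and orbifold generators $\rho'_i=n_{\rho_i}^{-1}\rho_i$, and write $L':=\langle\rho'_1,\dots,\rho'_n\rangle_{\mathbb{Z}}$. Since $\rho'_1,\dots,\rho'_n$ form a $\mathbb{Q}$-basis of $N_{\mathbb{Q}}$ one checks that
\[
[N_{\mathbb{Z},B}:\langle\sigma(1)\rangle] \;=\; [N_{\mathbb{Z}}:N_{\mathbb{Z}}\cap L']\cdot\prod_{i=1}^n n_{\rho_i}.
\]
To bound the product I fix $v_0\in\mathrm{relint}(\sigma)\cap N_{\mathbb{Z},B}$ realizing the mld: the translates $v_0+k\rho'_i$ for $k\in\mathbb{Z}_{\geq 0}$ lie on pairwise distinct rays and have log discrepancy $\mathrm{mld}+kL_B(\rho'_i)$, so condition~(2) forces $L_B(\rho'_i)\geq\delta'/(N+1)$ and hence $n_{\rho_i}\leq(N+1)/\delta'$. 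To bound the determinant-type index, each non-trivial coset of $N_{\mathbb{Z}}\cap L'$ in $N_{\mathbb{Z}}$ has a representative in the half-open fundamental parallelepiped of $L'$; representatives lying in $\mathrm{relint}(\sigma)$ yield primitive divisorial valuations whose $L_B$-value is bounded above by $\sum_i L_B(\rho'_i)$. The finite group $N_{\mathbb{Z}}/(N_{\mathbb{Z}}\cap L')$ embeds as a finite abelian subgroup of the real torus $N_{\mathbb{R}}/L'$, and a compactness argument applying Lemma~\ref{lem:closed-subgroup} together with Theorem~\ref{thm:real-torus} to a sequence of putative counterexamples converts the near-mld divisor bound into a uniform bound on this index in terms of $n,N,\epsilon,\delta'$.

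For the non-simplicial case I apply Lemma~\ref{lem:sub-cones} to an interior mld-minimizer $m\in\mathrm{relint}(\sigma)\cap N_{\mathbb{Z},B}$. This yields either a full-dimensional simplicial subcone $\tau\subseteq\sigma$ containing $m$ in its relative interior, or two simplicial subcones $\tau_1,\tau_2$ whose $\mathbb{Q}$-span is $N_{\mathbb{Q}}$, each generated by a proper subset of $\sigma(1)$. Each sub-cone gives a toric subsingularity $(X(\tau),B_\tau;x_{0,\tau})$ inheriting $\mathrm{mld}>\epsilon$ and the near-mld count $\leq N$, so the simplicial bound applies; in option~(3), the $\mathbb{Q}$-spanning property exhibits $N_{\mathbb{Z},B}/\langle\sigma(1)\rangle$ as a subquotient of the direct sum of the simplicial quotients for $\tau_1$ and $\tau_2$. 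I expect the main obstacle to be the determinant-type step in the simplicial case: unlike $\prod n_{\rho_i}$, which admits a direct combinatorial bound from condition~(2), controlling the twist $[N_{\mathbb{Z}}:N_{\mathbb{Z}}\cap L']$ genuinely requires the real-torus compactness of Lemma~\ref{lem:closed-subgroup}, which is precisely why those preparatory results are established just before the proposition.
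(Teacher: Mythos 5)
Your reduction in Step 1 (Proposition~\ref{prop:germ-change} plus Lemma~\ref{lem:from-gen-to-pair} to get to an affine toric klt pair, then Proposition~\ref{prop:reg-toric} to identify $\pi_1^{\rm reg}$ with $N_{\zz,B}/\langle\sigma(1)\rangle$) matches the paper. After that, both of your remaining steps have genuine gaps.

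In the simplicial case, your decomposition $[N_{\zz,B}:\langle\sigma(1)\rangle]=[N_{\zz}:N_{\zz}\cap L']\cdot\prod_i n_{\rho_i}$ is correct, but your direct bound on $\prod n_{\rho_i}$ does not work. The translates $v_0+k\rho'_i$ lie in the orbifold lattice $N_{\zz,B}$, not in $N_{\zz}$, so they do not correspond to divisorial valuations over $X(\sigma)$ with log discrepancy $\mathrm{mld}+kL_B(\rho'_i)$; hypothesis~(2) of the conjecture counts divisors over $X$, i.e.\ primitive points of $N_{\zz}\cap\relint(\sigma)$. If you repair this by translating by the integral ray generator $\rho_i\in N_\zz$, the near-mld count controls $L_B(\rho_i)=n_{\rho_i}L_B(\rho'_i)$ from below, which says nothing about $n_{\rho_i}$ itself (one can have $L_B(\rho_i)=1$ with $n_{\rho_i}$ arbitrarily large). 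This is precisely why the paper does not split the index: in its Step~2 the entire group $S_\sigma=Q(\Psi(N_{\zz,\Delta_T}))$ is fed into the compactness argument of Lemma~\ref{lem:closed-subgroup}/Theorem~\ref{thm:real-torus} at once, and the contradiction is derived from the full sequence of $L_i$'s and subgroups $S_{\sigma_i}$, not from any one factor.

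In the non-simplicial case the gap is more serious. First, the cones produced by Lemma~\ref{lem:sub-cones} are spanned by \emph{proper subsets} of $\sigma(1)$ but are not asserted to be simplicial, so you cannot invoke the simplicial bound for them directly. Second, and more importantly, $\tau(1)$ (or $\tau_1(1)\cup\tau_2(1)$ in option~(3)) need not exhaust $\sigma(1)$, so the natural map from the sub-cone quotients into $N_{\zz,B}/\langle\sigma(1)\rangle$ has a cokernel generated by the orbifold classes of the unseen rays, of order up to $\prod_{\rho\notin\tau_1(1)\cup\tau_2(1)}n_\rho$; nothing in your argument bounds this. The paper's Steps~3--4 work hard to avoid exactly this: a lexicographic induction with Lemma~\ref{lem:sub-cones} is used only \emph{inside} a Claim that produces $n$ linearly independent vectors $v_1,\dots,v_n$, each a non-negative integer combination of the rays of $\sigma$ with bounded total weight, summing to a multiple of the mld-minimizer. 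Because $\langle v_1,\dots,v_n\rangle\subset\langle\sigma(1)\rangle$, one gets a surjection $N_{\zz,\Delta}/\langle v_1,\dots,v_n\rangle\twoheadrightarrow\pi_1^{\rm reg}$, and the left-hand side is then bounded by passing to a crepant toric blow-up $Y\to X$ containing the simplicial cone $\sigma_0=\langle v_{1,0},\dots,v_{n,0}\rangle$, rescaling the boundary coefficients to $1-k_{i,0}/k$, and only \emph{then} applying the simplicial Step~2 with shrunken parameters $\epsilon/k,\delta/k$. Your plan omits both the construction of the $v_i$'s and the crepant blow-up, which is where the actual control comes from.
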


\begin{proof}
We prove the statement in four steps.\\

\textit{Step 1:} In this step, we reduce to the case of affine toric varieties.\\

Let $(T,\Delta_T+M_T;t)$ be a $n$-dimensional
generalized formally toric singularity
satisfying the hypotheses of the conjecture.
We may assume that $t$ is a torus invariant point.
By Lemma~\ref{lem:from-gen-to-pair}, 
it suffices to prove the statement for
formally toric singularities.
Hence, without loss of generality, 
we may assume that $M$ is the trivial b-divisor.
By Proposition~\ref{prop:germ-change}, 
we may assume that $(T,\Delta_T;t)$ is an affine toric variety.\\

\textit{Step 2:} In this step, we prove the case of $\qq$-factorial affine toric varieties.\\

We prove the case in which $(T,\Delta_T;t)$ is a $\qq$-factorial
affine toric variety. 
Let $\sigma \subset N_\qq$ be the full-dimensional simplicial rational polyhedral cone 
for which 
$T\simeq X(\sigma)$. 
Let $L_{\Delta}$ be the linear function on $N_\qq$ which corresponds 
to the log discrepancy
with respect to $K_T+\Delta_T$.
We denote by
$\rho_1,\dots,\rho_n\in N_\qq$ 
the lattice generators of the rays of $\sigma$.
We consider the $\qq$-linear invertible
transformation
$\Psi\colon N_\rr \rightarrow \rr^n$
which sends the lattice generators of
the rays of $\sigma$
to a canonical base of $\rr^n$.
By abuse of notation, 
we may identify $L_{\Delta}$
with $L_{\Delta} \circ \Psi^{-1}$.
We consider the quotient
\[
Q\colon \rr^n \rightarrow \rr^n/\zz^n\simeq T_\rr.
\]
Note that the function $L_\Delta$ is a well-defined linear function on 
$(0,1)^n \subset T$.
We denote by 
$S_\sigma$ the finite closed subgroup
\[
Q(\Psi(N_{\zz,\Delta_T})). 
\]
By Proposition~\ref{prop:reg-toric}, we know that
\[
|\pi_1^{\rm reg}(T,\Delta_T;t)| =
|\pi_1^{\rm reg}(T,\Delta_T^{\rm st};t)|= 
|S_\sigma|. 
\] 
Hence, it suffices to show that the order of the group $S_\sigma$ is bounded above whenever $(X(\sigma),\Delta_T;t)$ satisfies the conditions of the conjecture.

By contradiction, assume that there exists a sequence of cones $\sigma_i$
and pairs $(X(\sigma_i),\Delta_i;x)$,
which satisfy the hypotheses of the conjecture, 
so that $|S_{\sigma_i}|$ diverges.
Note that each pair
$(X(\sigma_i),\Delta_i;x)$ 
induces a log discrepancy 
function $L_i$ on $\rr^n$ as explained in the previous paragraph.
These log discrepancy functions
coverge to a linear function $L_\infty$.
By Lemma~\ref{lem:closed-subgroup}, up to passing to a subsequence, we can find a closed subgroup $S\leqslant T_\rr$ so that
\begin{equation}\label{eq:conv} 
S = \overline{\cup_{i\geq k} S_{\sigma_i}} 
\end{equation} 
holds for every $k\geq 1$.
We may assume that $S\cap (0,1)^n \neq \emptyset$.
Otherwise, the statement follows from induction on the dimension.
By construction, we have that
the infimum of $L_\infty$ in $S\cap (0,1)^n$ is at least $\epsilon$.
We let $m_\infty$ be the infimum of $L_\infty$ 
in
$S\cap (0,1)^n$.
Let $s_\infty \in S\cap (0,1)^n$ be so 
that $L_\infty(s_\infty) \in [m_\infty, m_\infty+\delta)$.
We choose $\epsilon_\infty>0$ small enough so that
$B(s_\infty,\epsilon_\infty) \subset (0,1)^n$.
By construction, we have that $\lim_{i\geq 1}m_i=m_\infty$,
where $m_i$ is the minimum of $L_i$
in $S_{\sigma_i}\cap (0,1)^n$.
By equality~\eqref{eq:conv},
we can choose $i$ large enough 
and $s_i\in S_{\sigma_i}$ so that 
$||s_i-s_\infty|| < \frac{\epsilon_\infty}{2}$.
Let $M$ be a positive integer.
Since $|S_{\sigma_i}|\rightarrow \infty$, we can find
$s'_i \in S_{\sigma_i}$ so that some representative
of $s'_i$ in $\rr^n$ has norm at most $\frac{\epsilon_\infty}{2M}$.
Hence, for $i$ large enough, we have that
\[
L_i(s_i + js'_i) \in [m_i,m_i+\delta) 
\] 
for every $j\in \{1,\dots, M\}$. 
Thus, for $i$ large enough, the toric pair $(X(\sigma_i),\Delta_i;x_i)$
has more than $M$ divisorial valuations with center
$x_i$ and log discrepancy in the interval $[m_i,m_i+\delta)$.
This leads to a contradiction. 
We conclude that $|S_{\sigma_i}|$ converges, so it must be bounded above.
Thus, $|S_{\sigma_i}|$ is bounded above by a constant which only depends on $n,N,\epsilon,\delta$. \\

\textit{Step 3:} In this step, we prove the existence of a special blow-up for affine toric varieties.\\

First, we will prove the following claim.\\

\textbf{Claim:} There exists a constant $k:=k(n,N,\epsilon,\delta)$, satisfying the following.
Let $(X(\sigma),\Delta;t)$ be a $n$-dimensional affine toric variety satisfying the hypothesis of the conjecture.
Let $m \in {\rm relint}(\sigma)\cap N$ be a lattice point computing the minimal log discrepancy. 
For $i\in \{1,\dots,n\}$, we can find $v_1,\dots,v_n$, linearly independent, satisfying the following: 
\begin{enumerate}
    \item For each $i$, we can write 
    $v_i=\sum_{\rho\in \sigma(1)}k_{i,\rho}\rho$ for certain non-negative integers $k_{i,\rho}$, 
    \item we have that $k_0m=\sum_{i=1}^n v_i$ for some positive integer $k_0$, and 
    \item there is an upper bound
    \[
    k_0 +\sum_{i=1}^n \sum_{\rho\in \sigma(1)}k_{i,\rho}\leq k.
    \]
\end{enumerate}

\begin{proof}[Proof of the Claim]
We will proceed by lexicographic induction on $(n,|\sigma(1)|)$.
If $\sigma$ is a simplicial cone, then the statement follows from the second step. 
If $\mathcal{S}(\sigma,m)$ contains a full-dimensional element, the statement follows by induction on $|\sigma(1)|$.
By Lemma~\ref{lem:sub-cones}, we may assume that $\mathcal{S}(\sigma,m)$ contains two elements
$\tau_1$ and $\tau_2$ so that they generate $N_\qq$. 
Let $d_i$ be the dimension of $\tau_i$.
Observe that $m$ is the minimizer for the log discrepancy function
in both  ${\rm relint}(\tau_1)\cap N$ and ${\rm relint}(\tau_2)\cap N$.
Hence, we may assume that the claim holds for both $\tau_1$ and $\tau_2$ by induction on the dimension. 
For each $i\in \{1,2\}$, we can find  
$v^i_1,\dots,v^i_{d_i}$ linearly independent
satisfying the following:
\begin{enumerate}
    \item For each $j$, we can write 
    $v^i_j=\sum_{\rho\in \tau_i(1)} k^i_{j,\rho} \rho$, for certain non-negative integers $k^i_{j,\rho}$, 
    \item we have that $k_0^i m = \sum_{j=1}^{d_i} v^i_j$ for some positive integer $k_0^i$, and
    \item there is an upper bound 
    \[
    k_0^i + \sum_{i=1}^2 \sum_{j=1}^{d_i} \sum_{\rho\in \tau_i(1)} k^i_{j,\rho} \leq  
    k(d_i,N,\epsilon,\delta).
    \] 
\end{enumerate}
Up to re-ordering, we may assume that the elements of the set
$\mathcal{B}:=\{v^1_1,\dots,v^1_{d_1},v^2_1,\dots,v^2_{n-d_1}\}$ are linearly independent and generate $N_\qq$.
Let $w:=v^2_{n-d_1+1}+\dots+v^2_{d_2}$.
For some $v\in \mathcal{B}$, we have that 
the elements of the set
$\mathcal{B}':=\mathcal{B}\setminus \{v\} \cup \{w+v\}$ are linearly independent. 
Let $v_1,\dots,v_n$ be the elements of the set $\mathcal{B}'$.
Since $\tau_i(1)\subset \sigma(1)$, the statement follows by taking 
\[
k(n,N,\epsilon,\delta) = 2\max\left\{ k(d,N,\epsilon,\delta) \mid d\in\{1,\dots,n-1\}\right\}.
\]
\end{proof}

Let $(X(\sigma),\Delta;x)$ be a
$n$-dimensional affine toric variety
satisfying the hypotheses of the conjecture.
Let $v_1,\dots,v_n$ be the lattice elements constructed in the claim.
For each $v_i$, we let $v_{i,0}$ be the lattice generator in $N_{\zz,\Delta}$ of the the ray spanned by $v_i$.
We denote by $\sigma_0$ the cone generated by the $v_{i,0}$'s.
By construction, we have that $\sigma_0 \subset \sigma$. 
Let $\phi\colon Y\rightarrow X$ be a projective birational toric blow-up which corresponds to a refinement of $\sigma$ containing $\sigma_0$ as a cone.
For each $i$, we have that 
\[
L_\Delta(v_{i,0}) = n_i L_\Delta(v_i) = n_i k_i = k_{i,0} \leq k(n,N,\epsilon,\delta) = k.
\]
For each $i$, we denote by $E_i$ the torus invariant divisor on $Y$ corresponding to the ray $v_{i,0}$. We let $y$ be the closed point in $Y$ corresponding to the interior of the cone $\sigma_0$.
Then, we can write 
\[
\phi^*(K_X+\Delta)=K_Y+\sum_{i=1}^n (1-k_{i,0})E_i + \Delta_Y,
\] 
where $\Delta_Y$ is a torus invariant divisor which does not intersect $y$.
By construction, we have that 
\[
m={\rm mld}(X_\sigma,\Delta;x) = 
{\rm mld}\left(Y,\sum_{i=1}^n (1-k_{i,0})E_i;y \right). 
\] 
The toric sub-pair
$(Y,\sum_{i=1}^n(1-k_{i,0})E_i;y)$ 
has at most $N$ divisorial valuations with center $y$ and log discrepancy in the interval $[m,m+\delta)$.
Furthermore, we have that 
\[
N_{\zz,\Delta} /\langle v_1,\dots, v_n \rangle_\zz \rightarrow 
\pi_1^{\rm reg}(X(\sigma),\Delta;x) 
\] 
is a surjective homomorphism.\\

\textit{Step 4:} We give an upper bound for
$N_{\zz,\Delta} /\langle v_1,\dots,v_n\rangle_\zz$ which only depends on $n,N,\epsilon$ and $\delta$.\\

We consider the toric pair 
\[
\left(Y, \sum_{i=1}^n \left(1-\frac{k_{i,0}}{k}\right)E_i;y\right). 
\] 
Then, we have that 
\[
{\rm mld} \left(Y, \sum_{i=1}^n \left(1-\frac{k_{i,0}}{k}\right)E_i;y\right) = \frac{m}{k}.
\] 
Furthermore, 
there are at most $N$ divisorial valuations over $Y$ with center $y$ and log discrepancy in the interval $\left[\frac{m}{k},\frac{m}{k}+\frac{\delta}{k}\right)$. By the second step, we conclude that 
\begin{equation}\label{eq:controlled-blow-up}
\left| 
\pi_1^{\rm reg}\left(Y,\sum_{i=1}^n\left(1-\frac{k_{i,0}}{k}\right) E_i ; y \right) 
\right| \leqslant \rho\left( 
n,N,\frac{\epsilon}{k},\frac{\delta}{k}
\right).
\end{equation} 
In particular, 
we have that 
\[
1-\frac{k_{i,0}}{k} \leq 1-\frac{1}{\rho\left(
n,N,\frac{\epsilon}{k},\frac{\delta}{k}\right)}.
\]
Hence, we conclude that $n_i \leq \rho(n,N,\epsilon k^{-1},\delta k^{-1})$.
By inequality~\eqref{eq:controlled-blow-up}, we conclude that 
\[
\left| 
N_{\zz,\Delta} /\langle v_{1,0},\dots,v_{n,0}\rangle_\zz 
\right| \leq \rho\left(
n,N,\frac{\epsilon}{k},\frac{\delta}{k}\right).
\]
We conclude that 
\begin{equation}\label{eq:finaleq}
\left| 
N_{\zz,\Delta} /\langle v_{1},\dots,v_{n}\rangle_\zz 
\right| \leq \rho\left(
n,N,\frac{\epsilon}{k},\frac{\delta}{k}\right)^{n+1}.
\end{equation} 
Since $k$ only depends on $n,N,\epsilon$ and $\delta$, we conclude that the value on the righ side hand of inequality~\eqref{eq:finaleq}, only depends on $n,N,\epsilon$, and $\delta$.
\end{proof}

\begin{proposition}\label{prop:quot-case}
Conjecture~\ref{conj:gen-boundedness} holds
for formally quotient singularities.
\end{proposition}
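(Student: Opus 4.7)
The plan is to reduce the quotient case to the toric case, Proposition~\ref{prop:toric-case}, by means of Jordan's theorem on finite subgroups of $\mathrm{GL}_n$. First, Lemma~\ref{lem:from-gen-to-pair} lets me replace the generalized pair with an ordinary klt pair, at the cost of shrinking $\delta$ slightly. By Proposition~\ref{prop:germ-change} and the definition of a formally quotient singularity, I may then assume $(X,\Delta;x)$ is formally isomorphic to $(V/G,\Delta_X;0)$ with $V\cong\mathbb{A}^n$ and $G\leq\mathrm{GL}(V)$ a finite subgroup. Quotienting out the (normal) subgroup generated by pseudo-reflections via Chevalley--Shephard--Todd, the total space $V$ remains smooth, so I may further assume that $G$ is small.

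Next I apply Jordan's theorem: there exists a constant $J(n)$ and a normal abelian subgroup $A\trianglelefteq G$ with $[G:A]\leq J(n)$. Since any finite abelian subgroup of $\mathrm{GL}_n(\kk)$ is simultaneously diagonalizable, after a linear change of coordinates $A$ sits inside the diagonal torus $T\leq\mathrm{GL}(V)$. Hence the intermediate quotient $Z:=V/A$ carries an effective action of the $n$-torus $T/A$ with a unique fixed point $z$, so $Z$ is an affine toric variety and $(Z;z)$ is a formally toric singularity. The induced morphism $f\colon Z\to X$ is a Galois quasi-\'etale cover with group $G/A$. Defining the effective boundary $\Delta_Z$ by $K_Z+\Delta_Z=f^{*}(K_X+\Delta)$, the preservation of log discrepancies under quasi-\'etale covers gives $\operatorname{mld}(Z,\Delta_Z;z)=\operatorname{mld}(X,\Delta;x)>\epsilon$. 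Applying Proposition~\ref{prop:ld-finite-quot} with $Z$ in the role of the cover and $X$ in the role of the quotient, there are at most $N\cdot |G/A|\leq N\,J(n)$ divisorial valuations over $Z$ centered at $z$ with log discrepancy in $[\operatorname{mld}(Z,\Delta_Z;z),\operatorname{mld}(Z,\Delta_Z;z)+\delta)$.

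Proposition~\ref{prop:toric-case} applied to the formally toric singularity $(Z,\Delta_Z;z)$ with parameters $(n,N\,J(n),\epsilon,\delta)$ then yields
\[
|\pi_1^{\mathrm{reg}}(Z,\Delta_Z;z)|\leq \rho_0:=\rho_{\mathrm{toric}}\bigl(n,N\,J(n),\epsilon,\delta\bigr).
\]
Since $f$ is a Galois quasi-\'etale cover with deck group $G/A$, one has the standard right-exact sequence of regional fundamental groups
\[
\pi_1^{\mathrm{reg}}(Z,\Delta_Z;z)\longrightarrow\pi_1^{\mathrm{reg}}(X,\Delta;x)\longrightarrow G/A\longrightarrow 1,
\]
which forces $|\pi_1^{\mathrm{reg}}(X,\Delta;x)|\leq \rho_0\cdot J(n)$, a bound depending only on $n,N,\epsilon$ and $\delta$.

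The delicate points are two. The first is the applicability of Proposition~\ref{prop:toric-case} to $(Z,\Delta_Z;z)$ even though $\Delta_V$ is only $G$-invariant and so $\Delta_Z$ need not be supported on toric divisors of $Z$; the proposition's statement covers formally toric singularities without requiring torus invariance of the boundary, but one should verify that the argument of Proposition~\ref{prop:toric-case} still applies once the toric log discrepancy function is replaced by the full (possibly non-toric) one, or else reduce $\Delta_Z$ to its toric part while tracking the mld and divisor count. The second is the compatibility of the Chevalley--Shephard--Todd reduction and of the cover $f$ with the numerical hypotheses, which is routine from the quasi-\'etale log discrepancy formula together with Proposition~\ref{prop:ld-finite-quot}; I expect the first point to be the main obstacle.
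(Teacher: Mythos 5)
Your strategy — quotient to an intermediate toric variety via a normal abelian subgroup of bounded index, apply the toric case there, and descend the bound via the finite-quotient exact sequence — is the same in outline as the paper's, but you choose the abelian subgroup by applying Jordan's theorem directly to $G\leq \mathrm{GL}(V)$, whereas the paper first quotients $V$ by a carefully chosen cyclic normal subgroup $G_F$ (the normal inertia of the divisor $F$ computing $\operatorname{mld}(X,\Delta;x)$) to obtain a toric intermediate $T=V/G_F$, and only then invokes a Jordan-type statement (\cite[Theorem~2]{Mor20c}) for the residual action of $H=G/G_F$ on the toric germ. This difference is where your argument breaks.

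The gap is the sentence ``the preservation of log discrepancies under quasi-\'etale covers gives $\operatorname{mld}(Z,\Delta_Z;z)=\operatorname{mld}(X,\Delta;x)$.'' This is false. For a finite quasi-\'etale cover $f\colon Z\to X$ and a divisor $E$ over $X$ with lift $E'$ over $Z$ ramified to order $r$, one has $a_{E'}(Z,\Delta_Z)=r\,a_E(X,\Delta)$, so $\operatorname{mld}(Z,\Delta_Z;z)\geq\operatorname{mld}(X,\Delta;x)$ and the inequality is strict whenever the divisor computing the mld downstairs ramifies. The basic example is $\mathbb{A}^2\to\mathbb{A}^2/\mu_n$ with the small $\tfrac{1}{n}(1,-1)$ action: the cover has mld $=2$ at the origin while the quotient has mld $=2/n$. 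Consequently $\operatorname{mld}(Z,\Delta_Z;z)$ can exceed $\operatorname{mld}(X,\Delta;x)+\delta$, the interval $[\operatorname{mld}(Z,\Delta_Z;z),\operatorname{mld}(Z,\Delta_Z;z)+\delta)$ need not be contained in any interval over which you have control on $X$, and your application of Proposition~\ref{prop:ld-finite-quot} to verify hypothesis (2) of Conjecture~\ref{conj:gen-boundedness} for $(Z,\Delta_Z;z)$ fails. This is precisely the difficulty the paper's $G_F$-step is designed to eliminate: by first quotienting out the inertia of the mld-computing divisor, the further quotient $T\to X$ (and then $T_0\to X$) becomes unramified over that divisor, and $\operatorname{mld}$ — together with the interval $[\operatorname{mld},\operatorname{mld}+\delta)$ and the count $N$ — is preserved (this is what \cite[2.42.4]{Kol13} gives). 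A generic normal abelian subgroup produced by Jordan's theorem offers no such control.

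Two smaller remarks. First, your Chevalley--Shephard--Todd reduction is counterproductive: if $R$ is the reflection subgroup of $G$, the variety $V/R$ is again affine space, but the log pull-back of $(X,\Delta)$ to $V/R$ has a nontrivial boundary supported on the images of the reflection hyperplanes, and after diagonalizing $A$ that boundary has no reason to be torus-invariant — which is the very obstacle you flag at the end. If you instead skip CST and quotient $V=\mathbb{A}^n$ (with trivial boundary) directly by a diagonalized abelian normal $A\trianglelefteq G$, the boundary $\Delta_Z$ on $Z=V/A$ is the ramification divisor of $V\to Z$ and hence automatically torus-invariant, so the issue you identify as ``the main obstacle'' disappears; the actual obstacle is the mld claim. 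Second, even granting equality of mld's, the way you cite Proposition~\ref{prop:ld-finite-quot} would need the same interval on both sides; that proposition transfers a bound on the quotient to a bound on the cover only for a fixed interval $(\epsilon_0,\epsilon_1)$, so the reduction only works after the mld's are matched.
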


\begin{proof}
By Proposition~\ref{prop:germ-change}, 
we may assume that $(X,\Delta;x)$ is an affine $n$-dimensional log quotient singularity.
Hence, we can write
\[
(X,\Delta;x) \simeq (\mathbb{A}^n;\{0\})/G,
\]
where $G$ is a finite group fixing then origin.
Let $E$ be the divisor over $X$ computing
the minimal log discrepancy
of $(X,\Delta;x)$.
Let $F$ be a $G$-prime divisorial valuation over $\mathbb{A}^n$ so that
its quotient is the divisorial valuation
corresponding to $E$.
We can find a $G$-equivariant projective birational morphism $Y\rightarrow \mathbb{A}^n$ so that $F$ is a $G$-invariant $G$-prime divisor 
on $Y$.
Let $G_F$ be the largest normal subgroup
which acts as the identity on $F$.
Then, $G_F$ is a cyclic subgroup of $G$
(see, e.g.,~\cite[Lemma 2.6.(b)]{FZ05}).
Let
\[
(T,\Delta_T;t):=(\mathbb{A}^n;\{0\})/G_F.
\]
Since the action of $G_F$ on $\mathbb{A}^n$ is diagonalizable, the
pair $(T,\Delta_T)$ is toric.
From~\cite[2.42.4]{Kol13}, we conclude that 
\[
{\rm mld}(T,\Delta_T;t):=
{\rm mld}(X,\Delta;x)
\] 
Let $H:=G/G_T$ be the quotient group
acting on $(T,\Delta_T;t)$.
Then, we have that 
\[
(X,\Delta;x) \simeq (T,\Delta_T;t)/H.
\]
By~\cite[Theorem 2]{Mor20c}, there exists a normal abelian subgroup $A\leqslant H$ so that
$A< \mathbb{G}_m^r \leqslant {\rm Aut}(T,\Delta_T;t)$ 
and the index of $A$ in $H$ is bounded
by a constant $c(n)$, 
which only depends on the dimension $n$ of $T$.
Let $(T_0,\Delta_{T_0};t_0)$ be the log quotient
of $(T,\Delta_T;t)$ by the abelian group $A$.
Then, $(T_0,\Delta_{T_0};t_0)$ is a toric singularity.
On the other hand, we have that
\begin{equation}\label{eq:mld>e} 
{\rm mld}(T_0,\Delta_{T_0};t_0) =
{\rm mld}(X,\Delta;x) > \epsilon.
\end{equation} 
Denote by $N_0:=H/A$.
Then, $N_0$ is a group of order at most $c(n)$
which acts on $(T_0,\Delta_{T_0};t_0)$ 
so that
\[
(X,\Delta;x)\simeq (T_0,\Delta_{T_0};t_0)/N_0.
\] 
By assumption, 
there are at most 
$N$ divisors over $X$, with center $x$ and
log discrepancy in the interval
\begin{equation}\label{eq:mld-int-1}
[{\rm mld}(X,\Delta;x), {\rm mld}(X,\Delta;x)+\delta). 
\end{equation} 
By Proposition~\ref{prop:ld-finite-quot},
we conclude that the toric pair
$(T_0,\Delta_{T_0};t_0)$ admits at most
$Nc(n)$ divisorial valuations with log discrepancy
in
\[
[{\rm mld}(T_0,\Delta_{T_0};t_0), 
{\rm mld}(T_0,\Delta_{T_0};t_0)+\delta)
\]
and center $t_0$.
By Proposition~\ref{prop:toric-case}, 
we conclude that there exists a constant
$\rho:=\rho(n,c(n)N,\epsilon,\delta)$ for which 
\[
|\pi_1^{\rm reg}(T_0,\Delta_{T_0};t_0)| \leq \rho.
\]
Note that the constant $\rho$ only
depends on $n,N,\epsilon$ and $\delta$.
On the other hand, we have a homomorphism 
\[
\pi_1^{\rm reg}(T_0,\Delta_{T_0};t_0)\rightarrow
\pi_1^{\rm reg}(X,\Delta;x) 
\] 
with cokernel of order at most $c(n)$.
We conclude that 
\[
|\pi_1^{\rm reg}(X,\Delta;x)| \leq 
\rho + c(n),
\]
which is a constant that
only depends on $n,N,\epsilon$ and $\delta$.
\end{proof}

\begin{proposition}\label{prop:3-fold-case}
Conjecture~\ref{conj:gen-boundedness} holds for generalized formally isolated $3$-fold singularities. 
\end{proposition}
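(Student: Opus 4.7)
The plan is to reduce to bounding the abelian part of $\pi_1^{\mathrm{reg}}$ via Theorem~\ref{thm:reg-fundamental}, perform a $\qq$-factorial dlt modification extracting all divisors with log discrepancy less than $1$, and descend to the surface setting via adjunction, where the quotient case already proved suffices.

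By Proposition~\ref{prop:germ-change} and Lemma~\ref{lem:from-gen-to-pair}, I may assume $(X,\Delta;x)$ is an isolated $3$-dimensional klt pair without moduli part, with the hypotheses preserved for some $\epsilon' \in (0, \epsilon]$ and $\delta' \in (1-\epsilon', \delta)$. Since $\mathrm{mld}(X,\Delta;x) + \delta' > 1$, every divisorial valuation over $X$ centered at $x$ with log discrepancy less than $1$ automatically lies in $[\mathrm{mld},\mathrm{mld}+\delta')$; by hypothesis there are at most $N$ such valuations, which I denote $E_1,\ldots,E_k$ with $k\leq N$. Applying Theorem~\ref{thm:reg-fundamental} yields
\[
1 \to A \to \pi_1^{\mathrm{reg}}(X,\Delta;x) \to Q \to 1,
\]
with $|Q| \leq c(3)$ and $A$ abelian of rank at most $3$, so it suffices to bound $|A|$.

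Extract $E_1,\ldots,E_k$ by a $\qq$-factorial dlt modification $\pi\colon Y\to X$, so that $\pi^{-1}(x) = E_1\cup\cdots\cup E_k$. By Theorem~\ref{thm:small-Q}, for each closed point $y\in \pi^{-1}(x)$ the map
\[
\pi_*\colon \pi_1^{\mathrm{reg}}(Y,\Delta_Y;y)\to \pi_1^{\mathrm{reg}}(X,\Delta;x)
\]
has cokernel of order at most $c(3)$, and the images generate $\pi_1^{\mathrm{reg}}(X,\Delta;x)$ up to a bounded quotient as $y$ ranges over the finite set of closed points of $\pi^{-1}(x)$. Since every such $y$ lies on at least one $E_i$, divisorial adjunction to $E_i$ produces a $2$-dimensional klt germ on $E_i$ at $y$ whose mld is bounded below in terms of $\epsilon'$, and whose near-mld divisors correspond bijectively to a subset of the exceptional divisors over $Y$ through $y$, giving an upper bound of the form $c_1(N)$. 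Every $2$-dimensional klt singularity is formally log quotient, so Proposition~\ref{prop:quot-case} uniformly bounds the regional fundamental group of this surface germ; a second application of Theorem~\ref{thm:small-Q} bounds $|\pi_1^{\mathrm{reg}}(Y,\Delta_Y;y)|$ in terms of that surface bound, up to a further factor $c(3)$.

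The main obstacle is the adjunction step: one must verify that both the lower bound on the mld and the upper bound on the count of near-mld divisors transport faithfully from the $3$-dimensional dlt germ $(Y,\Delta_Y;y)$ to the $2$-dimensional klt germ on $E_i$ at $y$, uniformly in the choice of extraction. Once these bounds are secured, composing over the finitely many points $y\in \pi^{-1}(x)$ together with the cokernel estimates from Theorem~\ref{thm:small-Q} and the factor $c(3)$ from Theorem~\ref{thm:reg-fundamental} yields the desired uniform bound on $|\pi_1^{\mathrm{reg}}(X,\Delta;x)|$.
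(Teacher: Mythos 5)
Your proposal leaves a genuine gap at exactly the step you flag as the ``main obstacle,'' and the gap is not a routine verification — it is where the proof would have to live. Divisorial adjunction from the $3$-fold dlt germ $(Y,\Delta_Y;y)$ to a surface germ on $E_i$ does not transport mld bounds faithfully: the different of the adjunction can push coefficients up to $1$, precise inversion of adjunction for mld's (Shokurov's PIA) is itself conjectural, and near-mld divisors over $Y$ through $y$ need not restrict bijectively to near-mld divisors over $E_i$. Without a quantitative adjunction statement, the reduction to Proposition~\ref{prop:quot-case} does not get off the ground. There is a second, independent problem: Theorem~\ref{thm:small-Q} produces a \emph{single} closed point $y\in Y$ for which $\pi_*$ has bounded cokernel; it does not assert that the images of $\pi_1^{\rm reg}(Y,\cdot;y)$ over all closed $y\in\pi^{-1}(x)$ generate $\pi_1^{\rm reg}(X,\Delta;x)$ up to a bounded quotient. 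Moreover, the boundary $B_Y$ in Theorem~\ref{thm:small-Q} is an auxiliary divisor supported on ${\rm Ex}(\pi)$ that you do not control, so it is not the dlt boundary you would restrict. Finally, your extraction of the sub-$1$ log discrepancy divisors is empty when $(X,\Delta;x)$ is canonical, a case your argument cannot start.

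For comparison, the paper sidesteps adjunction entirely. It passes to a small $\qq$-factorialization, uses Theorem~\ref{thm:reg-fundamental} to isolate a free abelian part $A_0$ of rank at most $3$, takes the quasi-\'etale cover associated to the bounded quotient $N_0$, and then bounds $A_0$ by exhibiting the universal cover as an index-one cover of Weil $\qq$-Cartier divisors (via Lemma~\ref{lem:index} and the local Mori dream space property from~\cite{BM21}); the Cartier index is then bounded directly by Shokurov's~\cite[Lemma~4.4.1]{Sho96} using the hypothesis $\delta>1-\epsilon$ to guarantee only finitely many valuations of log discrepancy in $(\epsilon_0,1+\epsilon_0)$. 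That last input is the genuinely $3$-fold-specific ingredient, and it is what replaces the surface reduction you attempted.
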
 

\begin{proof}
Let $(X,M;x)$ be a generalized formally isolated $3$-fold singularity.
By Proposition~\ref{prop:germ-change}, 
we may assume that $(X,M;x)$ is an affine $3$-dimensional isolated generalized klt singularity.
By Lemma~\ref{lem:from-gen-to-pair}, 
we may assume that $(X,\Delta;x)$ is a log pair. 
Furthermore, we can assume that all the coefficients of the boundary $\Delta$ are less than $\frac{1}{2}$.
Then, we have that
\[
\pi_1^{\rm reg}(X,\Delta;x) \simeq 
\pi_1^{\rm reg}(X;M;x). 
\] 
Let $\pi\colon Y \rightarrow X$ be a small $\qq$-factorialization of $(X,\Delta)$. 
By~\cite[Corollary 3]{Mor21a}, there exists a closed point $y\in Y$ for which 
\[
\pi_*\colon 
\pi_1^{\rm reg}(Y;y)\rightarrow \pi_1^{\rm reg}(X,\Delta)
\] 
is almost surjective with respect to the dimension, i.e., 
the cokernel of the homomorphism is bounded by a constant
$c(n)$, which only depends on $n=\dim X$.
Hence, it suffices to prove that the regional fundamental group of $Y$ at $y$ is bounded above
by a constant only depending 
on $n,N,\epsilon$ and $\delta$.
By Theorem~\ref{thm:reg-fundamental}, 
where exists an exact sequence
\[
1\rightarrow 
A_0\rightarrow 
\pi_1^{\rm reg}(Y,y)\rightarrow N_0 \rightarrow 1,
\]
where $A_0$ is a free finite abelian group
of rank at most $3$
and $N_0$ is a finite group of order at most $c'(n)$.
Hence, it suffices to prove that the 
order of each generator of $A_0$ is bounded
above by a constant which only depends
on $n,N,\epsilon$ and $\delta$.
Let 
\[
(Y';y') \rightarrow (Y;y) 
\]
be the quasi-\'etale Galois cover corresponding to the quotient group $N_0$.
Then, we have that 
\[
\pi_1^{\rm reg}(Y';y') \simeq A \simeq \zz_{k_1}\oplus \zz_{k_2} \oplus \zz_{k_3}. 
\] 
We claim that, for $\epsilon_0$ small enough, there are at most $c'(3)N$ exceptional divisorial valuations over $Y'$ whose centers map through $y'$ and
that have log discrepancy in the interval $(\epsilon_0,1+\epsilon_0)$.
Indeed, if we choose $\epsilon_0 <\epsilon$ small enough so that
$1+\epsilon_0 < \epsilon+\delta$, then the germ $(X,x)$ has at most $N$ divisorial valuations with center $x$ 
with log discrepancy in the interval $(\epsilon_0,1+\epsilon_0)$.
The same statement holds for $(Y,y)$.
Finally, since the degree of the finite map
$Y'\rightarrow Y$ is bounded by $N_0$,
we conclude the claim by Proposition~\ref{prop:ld-finite-quot}. 
By~\cite[Theorem 3.18]{BM21}, we know that $Y$ is locally a Mori dream space around $y$.
In particular, by Lemma~\ref{lem:index}, the universal cover of $(Y',y')$ is realized by the index one cover of Weil $\qq$-Cartier divisors.
In particular, the positive integers $k_1,k_2$, and $k_3$ are bounded above by the Cartier index of a Weil $\qq$-Cartier divisor on $Y'$ through $y'$.
By~\cite[Lemma 4.4.1]{Sho96}, the Cartier index of any Weil $\qq$-Cartier divisor on $Y'$ through $y'$ is bounded above by a constant $\rho_0(c'(3)N,\epsilon,\delta)$, 
which only depends on $N,\epsilon$ and $\delta$.
We conclude that 
\[
|\pi^{\rm reg}(X,M;x)| \leq
\rho(N,\epsilon,\delta):= 
c(3)+c'(3)+\rho_0(c'(3)N,\epsilon,\delta)^3. 
\] 
\end{proof}

\begin{proposition}\label{prop:ex-case}
Conjecture~\ref{conj:gen-boundedness} holds for generalized klt formally exceptional singularities.
\end{proposition}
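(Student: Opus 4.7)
The plan is to reduce the exceptional case to the boundedness of exceptional Fano pairs via a plt blow-up, in the same spirit as the classical study of exceptional singularities in~\cite{Mor18b,Mor18c,HLM20,HLS19}. First, by Proposition~\ref{prop:germ-change} and Lemma~\ref{lem:from-gen-to-pair}, I replace $(X,\Delta+M;x)$ with a klt pair $(X,B;x)$ whose mld is larger than some $\epsilon' < \epsilon$ and which admits at most $N$ exceptional divisorial valuations over $x$ with log discrepancy in $[{\rm mld}(X,B;x),{\rm mld}(X,B;x)+\delta')$ for some $\delta'<\delta$. Exceptionality persists under this reduction, since it is encoded by the finiteness of the set of zero-log-discrepancy places on any log canonical complement and is preserved by small perturbations of the boundary.

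Next, by the boundedness of complements for exceptional $\epsilon'$-klt singularities (Birkar; Han--Liu--Shokurov), there is a positive integer $\ell=\ell(n,\epsilon')$ and an $\ell$-complement $(X,B^+;x)$ which is strictly log canonical at $x$. Exceptionality then guarantees a unique prime divisor $E$ over $X$ with center $x$ and $a_E(X,B^+)=0$. I extract $E$ by a plt blow-up $\phi\colon Y \rightarrow X$, with $E$ the unique $\phi$-exceptional divisor. By adjunction, $(E,B_E)$ is a projective $\epsilon''$-klt exceptional Fano pair for some $\epsilon''=\epsilon''(n,\epsilon,\ell)>0$ with coefficients of $B_E$ in an hyperstandard set depending only on $\ell$. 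By Birkar's boundedness theorem for exceptional Fano varieties, the pairs $(E,B_E)$ lie in a bounded family. In particular, both the regional fundamental group of $(E,B_E)$ and the Cartier index of $K_E+B_E$ are bounded by a constant $C_1=C_1(n,\epsilon)$.

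To conclude, pick a general closed point $y\in E$ which is smooth on $Y$. By Theorem~\ref{thm:small-Q}, the natural homomorphism
\[
\phi_*\colon \pi_1^{\rm reg}(Y,\phi^{-1}_*B+(1-a)E;y) \longrightarrow \pi_1^{\rm reg}(X,B;x)
\]
has cokernel bounded by $c(n)$, where $a$ is the log discrepancy of $E$ with respect to $(X,B)$. Near $y$, the pair $(Y,\phi^{-1}_*B+(1-a)E)$ is plt with reduced part $E$, so Theorem~\ref{thm:reg-fundamental} (or a direct tubular-neighborhood argument) presents $\pi_1^{\rm reg}(Y,\phi^{-1}_*B+(1-a)E;y)$ as an extension of $\pi_1^{\rm reg}(E,B_E;\phi(y))$ by a cyclic group generated by a loop around $E$. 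The order of this cyclic quotient is controlled by the Cartier index of $K_E+B_E$ through Lemma~\ref{lem:index} and Lemma~\ref{lem:controlled-p1}, and hence is bounded by $C_1$. Combining these bounds yields the desired $\rho(n,N,\epsilon,\delta)$.

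The principal obstacle is the second step: I must verify that the complement index $\ell$, and hence the bounded family produced by Birkar's theorem, depend only on $n$ and $\epsilon$, and that exceptionality is genuinely preserved by the approximation of Lemma~\ref{lem:from-gen-to-pair}. A secondary delicate point is the local structural analysis at $y\in E$, where one must argue that the extension arising from the plt blow-up really is by a cyclic group whose order divides the Cartier index of $K_E+B_E$, as opposed to some larger invariant; this requires working with the orbifold structure on $E$ induced by the different.
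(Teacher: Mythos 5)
Your high-level template is the right one — extract the unique log canonical place $E$ of a complement (Kollár component / plt blow-up), invoke boundedness of exceptional Fano varieties, and then translate the resulting bound into a bound on the cyclic part of $\pi_1^{\rm reg}$ via Theorem~\ref{thm:reg-fundamental}. But the mechanism you propose for the last step is not established, and this is exactly where the paper does something genuinely different.

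The paper's key move is to pass to the \emph{universal cover} $(X',\Delta'+M';x')$ of the singularity (which exists and is finite by the Jordan-type structure in Theorem~\ref{thm:reg-fundamental}: for an exceptional singularity the regularity is $0$, so $A$ has rank at most one, hence $\pi_1^{\rm reg}$ is a cyclic-by-bounded extension). One extracts the unique lc place $E$ on $X$ and its preimage $E'$ on the universal cover, both exceptional Fano and hence in a bounded family. A curve $C\subset E'$ of bounded anticanonical degree then forces the coefficient $\alpha$ of $E'$ in the crepant pullback of $(X',\Delta'+M')$ to be bounded away from $-\infty$, i.e.\ the log discrepancy $a_{E'}(X',\Delta'+M')$ is bounded above by some $A(n)$. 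Since the cover $X'\to X$ is crepant and quasi-\'etale, log discrepancies multiply by the ramification index $r$ along $E$, so $r\cdot a_E(X,\Delta+M)=a_{E'}(X',\Delta'+M')\leq A(n)$, and the hypothesis ${\rm mld}>\epsilon$ gives $a_E>\epsilon$, hence $r\leq A(n)/\epsilon$. By \cite[Theorem 1]{Mor21a} this $r$ is precisely the order of the cyclic part $A$, and the bound follows. Your proposal never passes to the cover, and the assertion that the cyclic quotient has order divisible into (or bounded by) the Cartier index of $K_E+B_E$ is a genuinely different claim that is not supplied by Lemma~\ref{lem:index} or Lemma~\ref{lem:controlled-p1}: the order of $A$ measures ramification of a cover of $X$ along $E$, not an invariant intrinsic to $(E,B_E)$. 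As written, this step is a gap, not a ``delicate point.''

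Two further problems. First, your pair $(Y,\phi^{-1}_*B+(1-a)E)$ has $E$ with coefficient $1-a<1$ (since $a=a_E(X,B)>\epsilon>0$), so it is \emph{not} plt with reduced part $E$ and Theorem~\ref{thm:reg-fundamental} cannot be applied in the way you describe to produce the extension by $\pi_1^{\rm reg}(E,B_E)$. Second, Theorem~\ref{thm:small-Q} \emph{produces} a boundary $B_Y$ and a closed point $y$; it does not let you choose $y$ general in $E$ and $B_Y=(1-a)E$, so the local analysis at your chosen $y$ may not be the one the theorem actually controls. Finally, the worry you yourself flag about Lemma~\ref{lem:from-gen-to-pair} potentially destroying exceptionality is real; the paper sidesteps it by working with the generalized pair throughout and never invoking that lemma in this proposition. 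The boundedness-of-complements input you cite (choosing $\ell=\ell(n,\epsilon')$) is also not needed in the paper's argument: any $B\geq\Delta$ making the germ strictly log canonical suffices, because the curve $C$ of bounded $-K_{E'}$-degree does the bounding work.
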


\begin{proof}
By Proposition~\ref{prop:germ-change}, we may assume that we have an affine generalized klt exceptional singularity. 
Let $(X,\Delta+M;x)$ be a generalized klt exceptional singularity.
By Theorem~\ref{thm:reg-fundamental}, the regional fundamental group
$\pi_1^{\rm reg}(X,\Delta+M;x)$ is the extension of a finite cyclic group
with a finite group 
whose order is bounded above by a constant $c(n)$
only depending on the dimension $n$.
Let $(X',\Delta'+M';x)$ be its universal cover. 
By~\cite[Theorem 1]{Mor20}, we can find a projective birational morphism
$\pi\colon Y\rightarrow X$ that extracts the unique exceptional divisor $E$ over $x$ which can be a log canonical center through $x$. 
Let $B\geq \Delta$ be an effective divisor
so that $(X,B+M;x)$ is strictly log canonical at $x\in X$. 
Then, we obtain a commutative diagram as follows:
\[
\xymatrix{
Y'\ar[d]^-{\pi'}\ar[r] & Y \ar[d]^-{\pi}  \\
(X',B'+M';x')\ar[r]^-{\phi} & (X,B+M;x).
}
\]
Here, the vertical arrows are projective birational morphisms
and the horizontal arrows are finite quotients.
By the connectedness theorem, $Y'\rightarrow X'$ extracts a unique prime divisor $E'$ which is a
generalized log canonical center of $(X',B'+M';x')$.
Since $E$ is an exceptional Fano type variety, then it belongs to a bounded family (see, e.g.,~\cite[Theorem 1.2]{Bir21}).
Hence, $E'$ also belongs to a bounded family.
In particular, we can find a curve $C\subset E'$ which lies in the smooth locus and so that
$-K_{E'} \cdot C \leq k(n)$, for a constant $k(n)$, 
which only depends on the dimension $n$ (see, e.g.,~\cite[Lemma 2.19]{Mor18b}).
In particular, we have that the intersection of
$C$ with the strict transform of $B'$ on $Y'$ is bounded above by $k(n)$.
Write 
\[
K_{Y'}+B_{Y'}+E'+M_{Y'} = {\pi'}^*(K_{X'}+B'+M') 
\text{ and }
K_{Y'}+\Delta_{Y'}+\alpha E' +M_{Y'}=
{\pi'}^*(K_{X'}+\Delta'+M'). 
\] 
Hence, we conclude that 
\[
(1-\alpha)E' + (B_{Y'}-\Delta_{Y'}) \equiv_{X'} 0.
\] 
Observe that $-E'\cdot C$ is bounded below 
and $(B_{Y'}-\Delta_{Y'})\cdot C$ is bounded above by
a constant which only depends on $n$.
We conclude that $\alpha$ is bounded below by a constant that only depends on $n$.
This means that the log discrepancy at $E'$ 
for $(X',\Delta'+M';x')$ is bounded above
by a constant $A(n)$, which only depends on the dimension $n$.
Since ${\rm mld}(X,\Delta+M;x)>\epsilon$, we conclude that the ramification index of $Y'\rightarrow Y$ at $E'$ must be bounded above by a constant which only depends on the dimension. 
By~\cite[Theorem 1]{Mor21a}, we know that such ramification index is the order of the finite cyclic group appearing in the extension of $\pi_1^{\rm reg}(X,\Delta+M;x)$.
Hence, the regional fundamental group of $\pi_1^{\rm reg}(X,\Delta+M;x)$ is bounded above by a constant 
only depending on $\epsilon$ and $n$. 
\end{proof} 

\begin{proof}[Proof of Theorem~\ref{introthm:boundedness}]
The theorem follows from Proposition~\ref{prop:toric-case},
Proposition~\ref{prop:quot-case},
Proposition~\ref{prop:3-fold-case}, and 
Proposition~\ref{prop:ex-case}.
\end{proof} 

\section{Termination of flips with scaling}

In this section,
we discuss the termination of flips with scaling under the assumption that the boundedness conjecture on the regional fundamental group holds.
First, we introduce the following conjecture which is inspired by the classic Conjecture~\ref{conj:dim-base} for pairs.

\begin{conj}\label{conj:dim-base-gen} 
Let $(X,\Delta+M)$ be a generalized klt pair.
Then, the diminished base locus $B_{\rm -}(K_X+\Delta+M)$ is Zariski closed in $X$. 
\end{conj}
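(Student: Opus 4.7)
My plan is to reduce Conjecture~\ref{conj:dim-base-gen} to its pair analogue, Conjecture~\ref{conj:dim-base}, using the same approximation of a generalized klt pair by a genuine klt pair that appears in the proof of Lemma~\ref{lem:from-gen-to-pair}. The idea is to trade the moduli part $M$ for a $\qq$-linearly equivalent klt boundary on $X$, so that the existing theory of the diminished base locus for pairs applies verbatim.

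Concretely, I would take a log resolution $\pi\colon Y\to X$ of $(X,\Delta+M)$ on which $M$ descends and write
\[
K_Y+\Delta_Y+M_Y=\pi^*(K_X+\Delta+M).
\]
Since $M_Y$ is nef and big over $X$, the Kodaira-type decomposition of~\cite[Example 2.2.19]{Laz04a} gives, for every $k\gg 0$, a $\qq$-linear equivalence over $X$
\[
M_Y\sim_{\qq,X} A_k+\tfrac{1}{k}E,
\]
with $A_k$ a general effective ample $\qq$-divisor and $E\geq 0$ a fixed effective $\qq$-divisor independent of $k$. Setting $B_k:=\pi_*(\Delta_Y+A_k+\tfrac{1}{k}E)$, for $k$ sufficiently large $(X,B_k)$ is klt and satisfies $K_X+\Delta+M\sim_{\qq}K_X+B_k$. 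Since the diminished base locus depends only on the $\qq$-linear equivalence class of the divisor,
\[
{\rm Bs}_{-}(K_X+\Delta+M)={\rm Bs}_{-}(K_X+B_k),
\]
and Conjecture~\ref{conj:dim-base} applied to the klt pair $(X,B_k)$ finishes the argument.

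The delicate point is to ensure that the relative linear equivalence $M_Y\sim_{\qq,X}A_k+E/k$ upstairs on $Y$ really descends to a genuine $\qq$-linear equivalence of divisors on $X$, and that any vertical discrepancy coming from $\pi$ does not affect the asymptotic base locus. This can be handled by localizing around an arbitrary point of ${\rm Bs}_{-}(K_X+\Delta+M)$ and observing that the correction term is supported on ${\rm Ex}(\pi)$, which maps to a Zariski closed subset of $X$; the definition of ${\rm Bs}_{-}$ via small perturbations by a given ample divisor is stable under this replacement. Once this bookkeeping is taken care of, Conjecture~\ref{conj:dim-base-gen} carries no new content beyond Conjecture~\ref{conj:dim-base}, and the main obstacle is purely the classical pair case that is already conjectural in~\cite{ELMNP06} and the subsequent literature.
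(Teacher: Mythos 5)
This statement is a \emph{conjecture}, not a theorem, and the paper does not attempt to prove it; the only remark attached to Conjecture~\ref{conj:dim-base-gen} is that it is known up to dimension~$4$ as a consequence of the existence of minimal models for pseudo-effective generalized pairs~\cite[Theorem 1]{Mor18a}. What you have written is therefore not a proof of the conjecture but at best a proposed reduction to the equally open Conjecture~\ref{conj:dim-base}. You do acknowledge this at the end, but the reduction itself does not go through as written.

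The gap is in the application of Lemma~\ref{lem:from-gen-to-pair} (or, equivalently, of~\cite[Example 2.2.19]{Laz04a}) outside the local setting. In Lemma~\ref{lem:from-gen-to-pair} the pair is a \emph{germ}, so $X$ is essentially affine; then $M_Y$ being nef and big \emph{over $X$} is the same as being nef and big on $Y$, and the Kodaira decomposition $M_Y\sim_{\qq}A_k+E/k$ with $A_k$ a genuinely ample (hence generically effective) divisor is available. In the projective setting of Conjecture~\ref{conj:dim-base-gen}, $M_Y$ is only $\pi$-big (automatic for a birational $\pi$) and $\pi$-nef; the decomposition one gets is therefore only $M_Y=A_k+E/k$ with $A_k$ a $\pi$-ample class. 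Such a class is in general neither ample nor pseudo-effective on $Y$ (think of $-E$ for $E$ the exceptional divisor of a point blow-up), so it has no general effective representative, and $B_k:=\pi_*(\Delta_Y+A_k+E/k)$ need not be an effective boundary on $X$. Consequently you do not produce a klt pair $(X,B_k)$ to which Conjecture~\ref{conj:dim-base} could be applied. Your remedy of ``localizing around a point of ${\rm Bs}_{-}(K_X+\Delta+M)$'' cannot repair this, because ${\rm Bs}_{-}$ is a global asymptotic invariant: it is defined via stable base loci of global small ample perturbations, and its restriction to an open chart is \emph{not} the diminished base locus of the restricted divisor, so Zariski-closedness cannot be tested locally in the way you suggest.
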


Conjecture~\ref{conj:dim-base-gen} is known up to dimension $4$.
This result follows from the existence of minimal models for generalized pseudo-effective pairs of dimension at most $4$~\cite[Theorem 1]{Mor18a}. 
The diminished base locus of the divisor $K_X+\Delta+M$ is exactly the locus which is contracted or flipped in a minimal model program with scaling for $K_X+\Delta+M$ (see Lemma~\ref{lem:MMP-vs-dim}).
Thus, Conjecture~\ref{conj:dim-base-gen} is actually necessary for the termination of flips with scaling, as the exceptional locus of a minimal model program is a Zariski closed subset. The following is the main theorem of this subsection.

\begin{theorem}\label{thm:term-scaling}
Assume that the following statements hold:
\begin{enumerate} 
\item The boundedness of the regional fundamental group, Conjecture~\ref{conj:gen-boundedness}, holds in dimension $n$,
\item  the upper bound for minimal log discrepancies, Conjecture~\ref{conj:upp-mld}, holds in dimension $n$, and
\item the Zariski closedness of the diminished base locus, Conjecture~\ref{conj:dim-base-gen}, holds in dimension $n$.
\end{enumerate} 
Then, termination of flips with scaling holds in dimension $n$.
\end{theorem}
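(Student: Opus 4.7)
The plan is to argue by contradiction. Assume an infinite sequence of flips
\[
(X,\Delta+M)=(X_0,\Delta_0+M_0)\dashrightarrow (X_1,\Delta_1+M_1)\dashrightarrow\cdots
\]
from an MMP with scaling of an ample divisor. By Conjecture~\ref{conj:dim-base-gen}, each diminished base locus $B_i:=\mathrm{Bs}_{-}(K_{X_i}+\Delta_i+M_i)$ is Zariski closed with finitely many components, and Lemma~\ref{lem:MMP-vs-dim} forces every component of $B_0$ to be eventually exceptional for some $X_0\dashrightarrow X_j$. Put $m_i:=\mathrm{mld}(X_i,\Delta_i+M_i;B_i)$; the sequence $(m_i)$ is non-decreasing by Lemma~\ref{lem:mld-diminished} and bounded above by $A(n)$ via Conjecture~\ref{conj:upp-mld}. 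The strategy is to show that $(m_i)$ takes only finitely many values and then to use the strict case of Lemma~\ref{lem:mld-diminished} as a substitute for lower semicontinuity of the mld to contradict Lemma~\ref{lem:MMP-vs-dim}.

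For finiteness of $(m_i)$, fix $\delta\in(0,1)$. Let $W_i\subseteq B_i$ be an irreducible component computing $m_i$, and let $x_i\in W_i$ be a general closed point; Lemma~\ref{lem:generic-vs-general} gives $\mathrm{mld}(X_i,\Delta_i+M_i;x_i)=m_i+\mathrm{codim}_{X_i}(W_i)$ and identifies the number of near-mld divisors at $x_i$ with the number whose center is $W_i$ and whose log discrepancy lies in $[m_i,m_i+\delta)$. To bound this count uniformly in $i$, I would pull each such divisor $E$ back to $X_0$: monotonicity (Lemma~\ref{lem:gen-monotonicity}) yields $a_E(X_0,\Delta_0+M_0)\leq a_E(X_i,\Delta_i+M_i)<A(n)+\delta$, and iterated application of Proposition~\ref{prop:flips-vs-dim-base} forces $c_{X_0}(E)\subseteq B_0$. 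Partitioning the range $[\mathrm{mld}(X_0;B_0),A(n)+\delta)$ into finitely many subintervals of length less than $1$ and applying Lemma~\ref{lem:finiteness-e+d} to each then yields a uniform bound $N$. With $\epsilon:=\mathrm{mld}(X_0,\Delta_0+M_0;B_0)/2$, Conjecture~\ref{conj:gen-boundedness} applied with input $(n,N,\epsilon,\delta)$ gives $|\pi_1^{\mathrm{reg}}(X_i,\Delta_i+M_i;x_i)|\leq\rho$ uniformly in $i$.

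Next, fix $q\in\mathbb{Z}_{>0}$ so that $q\Delta_Y$ and $qM_Y$ are Weil on a log resolution $Y\to X$; since $\Delta_i$ is the pushforward of $\Delta$ and the trace of $M$ on $X_i$ is determined by $M$, the same $q$ works for every $i$. Lemma~\ref{lem:controlled-p1} forces every log discrepancy at a divisor centered through $x_i$ to lie in $\tfrac{1}{q\rho}\mathbb{Z}$, so $m_i\in\tfrac{1}{q\rho}\mathbb{Z}$. Combined with $m_i\leq A(n)$ this leaves only finitely many possible values and $(m_i)$ stabilizes at some $m$ for $i\geq i_0$. Let $S_i$ denote the set of divisors computing $\mathrm{mld}(X_i;B_i)=m$, which is finite by Lemma~\ref{lem:finiteness-e+d}. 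The strict part of Lemma~\ref{lem:mld-diminished} guarantees that for every flip $\pi_i\colon X_i\dashrightarrow X_{i+1}$ some $E\in S_i$ has $c_{X_i}(E)\not\subseteq\mathrm{Ex}(\pi_i)$. Arguing as above, the union $\bigcup_{i\geq i_0}S_i$ is finite, so pigeonhole produces a single $E_0$ whose strict transform $c_{X_i}(E_0)$ has constant dimension for infinitely many $i\geq i_0$. But $c_{X_{i_0}}(E_0)$ is contained in some irreducible component of $B_{i_0}$, which by Lemma~\ref{lem:MMP-vs-dim} must eventually lie entirely in $\mathrm{Ex}(X_{i_0}\dashrightarrow X_j)$ for $j\gg i_0$; this contradicts the persistence of $c_{X_j}(E_0)$ and concludes the proof.

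The main obstacle I foresee is producing the uniform count $N$, and more generally controlling how divisors that are mld-computers on some $X_i$ descend to divisors with center in $B_0$: Proposition~\ref{prop:flips-vs-dim-base} provides the containment in only one direction, and a priori a divisorial valuation could enter the diminished base locus at a later step without having been in $B_0$. Closing this gap carefully, probably via the finite-denominator behaviour of MMP with scaling in the spirit of \cite[Lemma~4.4]{BZ16}, is the essential technical step; the remainder of the argument is a fairly mechanical combination of the conjectures and the lemmas already established in the preliminaries.
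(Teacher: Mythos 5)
Your proposal follows essentially the same strategy as the paper's proof: bound the regional fundamental group at a general point of the mld-computing center inside $B_i$ via Conjecture~\ref{conj:gen-boundedness} and Lemma~\ref{lem:generic-vs-general}, invoke Lemma~\ref{lem:controlled-p1} to place $m_i$ in the lattice $\zz\left[\frac{1}{q\rho}\right]\cap(0,A(n)]$, and then combine the strict part of Lemma~\ref{lem:mld-diminished} with Lemma~\ref{lem:MMP-vs-dim} to contradict the eventual constancy of $m_i$. The concern you raise at the end does not in fact arise: iterating Proposition~\ref{prop:flips-vs-dim-base} backward, if $c_{X_i}(E)\subseteq B_i$ then, since $c_{X_i}(E)$ is irreducible, it lies either in $\phi_{i-1*}(B_{i-1})$ or in ${\rm Ex}(\phi_{i-1}^{-1})$; in the first case $c_{X_{i-1}}(E)$, being the strict transform, lies in $B_{i-1}$, and in the second $c_{X_{i-1}}(E)\subseteq{\rm Ex}(\phi_{i-1})\subseteq B_{i-1}$ because the flipping locus is covered by $(K_{X_{i-1}}+\Delta_{i-1}+M_{i-1})$-negative curves. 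So no valuation can enter the diminished base locus at a later step without its center on $X_0$ already lying in $B_0$, and no appeal to the scaling coefficients in the spirit of~\cite[Lemma~4.4]{BZ16} is needed for this; the scaling hypothesis enters only through Lemma~\ref{lem:MMP-vs-dim}. (The paper bounds the count by descending only to $X_{i_0}$ with a carefully chosen $\delta_0$, whereas you descend all the way to $X_0$ and partition a longer interval; both work.) Two small points to add: first reduce to a sequence of flips by induction on Picard number, treating the non-pseudo-effective case via the Mori fiber space outcome; and note that Conjecture~\ref{conj:upp-mld}, stated for klt singularities, extends to generalized klt pairs by passing to a small $\qq$-factorialization, which the paper records at the start of its proof.
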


\begin{proof} 
First, we show that Conjecture~\ref{conj:upp-mld} implies the existence of an upper bound for the minimal log discrepancy of a generalized klt pair of dimension $n$.
Let $(X,\Delta+M;x)$ be a generalized klt singularity of dimension $n$.
Let $\phi\colon Y\rightarrow X$ be a small $\qq$-factorialization of $X$.
Write $\Delta_Y$ for the strict transform of $\Delta$ on $Y$ and
$M_Y$ the trace of the b-divisor on $Y$. 
Then, we have that
\[
{\rm mld}(X,\Delta+M;x) 
= 
{\rm mld}(Y,\Delta_Y+M_Y;\phi^{-1}(x)). 
\] 
On the other hand, we have that
\[
{\rm mld}(Y,\Delta_Y+M_Y;\phi^{-1}(x)) 
\leq 
{\rm mld}(Y;\phi^{-1}(x)) 
\leq A(n),
\] 
where the last inequality follows from Conjecture~\ref{conj:upp-mld}. Here, $A(n)$ is a constant which only depends on the dimension $n$.
Hence, we conclude that
\[
{\rm mld}(X,\Delta+M;x) 
\leq A(n),
\] 
as claimed. Thus, we may assume Conjecture~\ref{conj:upp-mld} holds for generalized klt pairs.

Let $(X,\Delta+M)$ be a generalized pair.
If $K_X+\Delta+M$ is not pseudo-effective, 
then a minimal model program with scaling of an ample divisor for $(X,\Delta+M)$ terminates with a Mori fiber space by~\cite[Lemma 4.4]{BZ16}.
Hence, from now on, we may assume
that $K_X+\Delta+M$ is pseudo-effective.
In particular ${\rm Bs}_{-}(K_X+\Delta+M)$ is a proper Zariski closed subset of $X$.

Let $(X,\Delta+M)$ be a generalized klt pair.
Assume that $K_X+\Delta+M$ is pseudo-effective.
By induction on the Picard rank, 
it suffices to prove that a sequence of flips for the MMP with scaling for $(X,\Delta+M)$ always terminates.
Let 
\[
(X,\Delta+M) 
\dashrightarrow 
(X_1,\Delta_1+M_1) 
\dashrightarrow 
(X_2,\Delta_2+M_2) 
\dashrightarrow 
\dots
\dashrightarrow 
(X_i,\Delta_i+M_i) 
\dashrightarrow 
\cdots 
\] 
be a sequence of flips for the MMP
with scaling of an ample divisor
for $(X,\Delta+M)$.
For each pair $i\leq j$, we denote by
$\phi_{i,j}\colon X_i\dashrightarrow X_j$ 
the corresponding birational contracion. 
For each $i\geq 1$, we denote by
\[
B_i := 
{\rm Bs}_{-}(K_{X_i}+\Delta_i+M_i) \subsetneq X_i,
\] 
the diminished base locus.
By Conjecture~\ref{conj:dim-base-gen}, the subset $B_i\subset X_i$ is a proper Zariski closed subset.
By Conjecture~\ref{conj:dim-base-gen} and Lemma~\ref{lem:MMP-vs-dim}, 
we know that for each $i$, there exists $j_i$ so that
\[
B_i \subset {\rm Ex}(\phi_{i,j}).
\] 
In particular, if we set 
\[
m_i:= {\rm mld}(X_i,\Delta_i+M_i;B_i).
\] 
Then, the sequence $m_i$, with $i\geq 1$, 
is an infinite sequence.
By Lemma~\ref{lem:mld-diminished} and Lemma~\ref{lem:gen-monotonicity}, 
we conclude that for each $i$,
there exists $j>i$ for which
\[
m_j>m_i. 
\] 
By Conjecture~\ref{conj:upp-mld}, we know that $m_i\leq A(n)$ for every $i$.
Hence, the sequence $\{m_i\}_{i\geq 1}$ converges to some real number $m_\infty \in (0,A(n)]$.
Furthermore, there exists an strictly increasing subsequence of $\{m_i\}_{i\geq 1}$ which converges to $m_\infty$.
We can find $i_0$ large enough and $\delta \in (0,1)$ for which
\begin{equation}\label{eq:mld-cont-i_0}  
m_\infty \in [m_{i_0}, m_{i_0}+\delta). 
\end{equation} 
Note that the same inclusion~\eqref{eq:mld-cont-i_0} holds for every $j\geq i_0$.
Set 
\[
\delta_0:= m_{i_0}-m_\infty  + \frac{\delta}{2}.
\] 
By Lemma~\ref{lem:finiteness-e+d}, for each $i$, there are only finitely many exceptional divisors
$E_1^i,E_2^i, \dots , E_{s_i}^i$
with center in $B_i$ 
and log discrepancy in the interval 
\[
[m_i,m_i+\delta_0). 
\] 
We claim that $|s_i|$ is bounded above for $i$ large enough.
Indeed, for every divisorial valuation
$E_j^i$, with $i\geq i_0$, we have that
\begin{equation}\label{eq:i0-1}
a_{E_j^i}(X_{i_0},\Delta_{i_0}+M_{i_0}) 
\in 
\left[ m_{i_0}, m_{i_0} +\frac{\delta}{2} 
\right)
\end{equation} 
and 
\begin{equation}\label{eq:i0-2}
c_{E_j^i}(X_{i_0})\subseteq B_{i_0}.
\end{equation} 
By Lemma~\ref{lem:finiteness-e+d}, 
there is a positive integer $N_0$,
so that there are at most $N_0$ exceptional divisorial valuations over
$X_i$ satisfying inclusion~\eqref{eq:i0-1}
and containment~\eqref{eq:i0-2}.
Hence, the order of the set of divisorial valuations induced by the exceptional divisors
\[
\left\{ 
E_j^i \mid 1\leq j \leq s_i 
\right\} 
\] 
is bounded above by $N_0$. 
For each $i\geq i_0$, we may assume that
$E_1^i$ computes the minimal log discrepancy
of $(X_i,\Delta_i+M_i;B_i)$.
Hence, for each $i\geq i_0$,
there are at most $N_0$
exceptional divisors $E$ over $X_i$
with 
\[
c_E(X_i)=c_{E_1^i}(X)
\]
and
\[
a_E(X_i,\Delta_i+M_i) \in [m_i,m_i+\delta_0). 
\] 
By Conjecture~\ref{conj:gen-boundedness} and Lemma~\ref{lem:generic-vs-general}, we conclude that
for a general closed point $x\in c_E(X_i)$ we have that
\[
|\pi_1^{\rm reg}(X_i,\Delta_i+M_i;x_i)|
\leq \rho:=\rho(n,N_0,\epsilon,\delta_0),
\] 
where $\epsilon=\frac{m_{i_0}}{2}$.
Hence, by Lemma~\ref{lem:controlled-p1}, for $i\geq i_0$, we have that 
\[
m_i:= 
a_{E_1^i}(X_i,\Delta_i+M_i) 
\in 
\zz_{>0}\left[ \frac{1}{q\rho} \right]
\cap (0,a(n)],
\] 
where $q$ is a positive integer that only depends on the initial model $(X,\Delta+M)$
and $\rho$ only depends on $n,N_0,\epsilon$ and $\delta_0$.
In particular, both $q$ and $\rho$ are independent of $i$.
This contradicts the fact that
the sequence
$\{ m_i\}_{i\geq 1}$ admits an infinite strictly increasing sequence.
Thus, after finitely many flips with scaling, we have that 
$B_j={\rm Bs}_{-}(K_{X_j}+\Delta_j+M_j)=\emptyset$.
This means that $K_{X_j}+\Delta_j+M_j$ is a nef divisor, so the minimal model program terminates.
\end{proof} 

To conclude this section, we will show
that Conjecture~\ref{conj:boundedness} in dimension $4$ implies the termination of flips in dimension $4$. 

\begin{theorem}
Let $(X,\Delta)$ be a $4$-dimensional klt pair.
If Conjecture~\ref{conj:boundedness} holds in dimension $4$, then 
any minimal model program for $(X,\Delta)$ terminates in finitely many steps.
\end{theorem}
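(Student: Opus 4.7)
The plan is to reduce the theorem to Theorem~\ref{thm:term-scaling} applied in dimension $n=4$. For this, one verifies the three hypotheses of that theorem in dimension four. Conjecture~\ref{conj:gen-boundedness} in dim 4 follows from the assumed Conjecture~\ref{conj:boundedness} in dim 4 via Lemma~\ref{lem:from-gen-to-pair}: any generalized klt singularity $(X,\Delta+M;x)$ is approximated by a genuine klt pair $(X,B;x)$ sharing the same regional fundamental group and, for some $\delta'<\delta$, essentially the same count of exceptional divisors in $[{\rm mld},{\rm mld}+\delta')$, to which the hypothesis applies. Conjecture~\ref{conj:upp-mld} in dim 4 is elementary: the mld of any $n$-dimensional klt singularity is bounded above by $n$, so $A(4)=4$ suffices. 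Conjecture~\ref{conj:dim-base-gen} in dim 4 is established in \cite{Mor18a} via the existence of minimal models for generalized pseudo-effective klt pairs in dim $\leq 4$; combined with Lemma~\ref{lem:MMP-vs-dim}, this yields Zariski closedness of the diminished base locus.

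With these three inputs in place, Theorem~\ref{thm:term-scaling} yields termination of any MMP with scaling of an ample divisor in dim 4. To deduce termination of an arbitrary MMP, note first that divisorial contractions are bounded in number by the initial Picard rank, so the content reduces to termination of arbitrary flip sequences. When $K_X+\Delta$ is not pseudo-effective, the MMP with scaling terminates with a Mori fiber space by \cite[Lemma 4.4]{BZ16}, so no infinite flip sequence is possible. When $K_X+\Delta$ is pseudo-effective, one adapts the argument of Theorem~\ref{thm:term-scaling}: the diminished base locus $B_i$ remains Zariski closed at every step, the values $m_i:={\rm mld}(X_i,\Delta_i;B_i)$ form a non-decreasing sequence bounded above by $4$, and Lemma~\ref{lem:controlled-p1} confines these values to a discrete subset of $(0,4]$ whose spacing is controlled by the uniform bound on $|\pi_1^{\rm reg}|$ provided by Conjecture~\ref{conj:boundedness}.

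The principal obstacle in this approach is the last step: the scaling hypothesis in Theorem~\ref{thm:term-scaling} is used essentially through Lemma~\ref{lem:MMP-vs-dim} to guarantee that every component of the diminished base locus is eventually flipped or contracted, forcing the strict increase $m_j>m_i$ for some $j>i$. Removing the scaling assumption requires either an independent argument that every irreducible component of $B_i$ is visited infinitely often in an arbitrary flip sequence, or a reduction of an arbitrary flip sequence to one with scaling by tracking the pseudo-effective thresholds against a fixed ample divisor. Once this bridging step is in place, the rest of the argument is a direct application of Theorem~\ref{thm:term-scaling} together with the dim-$4$ specializations above.
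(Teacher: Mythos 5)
Your proposal goes through Theorem~\ref{thm:term-scaling}, which the paper does not do, and you correctly flag the resulting gap yourself: Theorem~\ref{thm:term-scaling} only yields termination of an MMP \emph{with scaling}, whereas the statement asks for termination of \emph{any} MMP. The use of scaling in the proof of Theorem~\ref{thm:term-scaling} is essential, not cosmetic: Lemma~\ref{lem:MMP-vs-dim} is what guarantees that every irreducible component of ${\rm Bs}_{-}(K_X+\Delta)$ is eventually flipped or contracted, which in turn drives the strict increase $m_j>m_i$ that produces the discreteness contradiction. For an arbitrary flip sequence there is no such guarantee, and neither of the two bridges you propose is supplied in the paper or elsewhere. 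Thus, as written, your argument proves only a weaker statement (termination with scaling in dimension four, which is already Theorem~\ref{thm:term-scaling} applied with the dimension-four inputs), not the theorem as stated. A secondary caution: your claim that Conjecture~\ref{conj:upp-mld} is elementary with $A(4)=4$ would need justification --- the paper deliberately states it as a conjecture and remarks that each of ACC and LSC \emph{implies} it, so it is not treated as obvious there.

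The paper's proof avoids scaling entirely and uses a different reduction. It reduces to termination of flips for canonical $4$-fold pairs, which is known by Fujino, and uses Conjecture~\ref{conj:boundedness} to show that the finitely many non-canonical divisorial valuations (those $E$ with $a_E\in(0,1)$) must have their log discrepancies stabilize after finitely many steps. Concretely: the number $N$ of such valuations is non-increasing along the sequence; at a generic point of the center of the worst such valuation, Lemma~\ref{lem:finiteness-e+d} and Lemma~\ref{lem:generic-vs-general} bound the count $N'$ of nearby valuations; Conjecture~\ref{conj:boundedness} then bounds $|\pi_1^{\rm reg}|$; Lemma~\ref{lem:controlled-p1} forces the log discrepancies into a discrete set; and the monotonicity Lemma~\ref{lem:gen-monotonicity} shows a log discrepancy strictly increases each time the corresponding center is flipped, so each non-canonical valuation is only flipped finitely often. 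After truncation, the non-canonical locus is never touched, and the flip sequence descends to a flip sequence for the canonicalization, where Fujino's theorem applies. This route does not require Conjecture~\ref{conj:upp-mld} or Conjecture~\ref{conj:dim-base-gen}, which is exactly what makes the dimension-four statement stronger than a specialization of Theorem~\ref{thm:term-scaling}.
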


\begin{proof}
If the pair $(X,\Delta)$ is canonical, then the statement follows from~\cite[Theorem 2]{Fuj04}.
As it is usual, it suffices to prove that any sequence of flips terminates.
Let 
\[
(X,\Delta) 
\dashrightarrow 
(X_1,\Delta_1) 
\dashrightarrow 
(X_2,\Delta_2) 
\dashrightarrow 
\dots
\dashrightarrow 
(X_i,\Delta_i) 
\dashrightarrow 
\cdots 
\] 
For each $i$, we denote by $N_i$ the number
of non-canonical exceptional divisors over $X_i$, with respect to $(X_i,\Delta_i)$.
By monotonicity, we have that 
$N_j \leq N_j$ for $j\geq i$.
If $N_i=0$ for some $i$, then the theorem follows from the canonical case~\cite[Theorem 2]{Fuj04}.
Hence, we may assume that $N=N_i$ for every $i$ large enough. 
Truncating the sequence of flips, we may simply assume that $N=N_i$ for every $i\geq 1$.
For each $i\geq 1$, we denote by 
$E_i^1,\dots,E_i^N$ the exceptional divisors over $(X_i,\Delta_i)$ with log discrepancy in the interval $(0,1)$.
We claim that, after finitely many flips, 
the flipping locus of $X_i\dashrightarrow X_{i+1}$ is disjoint from the generic points of the set
\begin{equation}\label{eq:union-centers}
\bigcup_{j=1}^N c_{E_j^i}(X_i). 
\end{equation} 
In other words, for each $j\in \{1,\dots, N\}$, the center of $E_i^j$ in $X_i$ is contained in the flipping locus
only finitely many times.
Up to reordering, we may assume that $E^1_i$ computes the minimal log discrepancy of $(X_i,\Delta_i)$ at the generic points of its center.
We know that $(X,\Delta)$ is $\epsilon$-log canonical for certain $\epsilon>0$.
By Lemma~\ref{lem:finiteness-e+d}, there exists at most $N'\geq N$ exceptional divisors over $X$ with log discrepancy in the interval $(\epsilon,1+\epsilon)$. 
Hence, for every model $X_i$, there are at most $N'$ exceptional divisors with center 
equal to $c_{E_1^i}$ and log discrepancy in the interval
\[
[ a_{E_1^i}(X_i,\Delta_i), 
a_{E_1^i}(X_i,\Delta_i) + \epsilon). 
\] 
By Conjeture~\ref{conj:boundedness}
and Lemma~\ref{lem:generic-vs-general},
we conclude that for a general closed point $x\in c_{E_1^i}(X_i)$, we have that 
\[
|\pi_1^{\rm reg}(X_i,\Delta_i+M_i;x_i)| 
\leq \rho := \rho(4,N',\epsilon,\epsilon).
\] 
Hence, by Lemma~\ref{lem:controlled-p1}, for $i\geq 1$, we have that
\[
a_{E_1^i} \in 
\zz_{>0}\left[ \frac{1}{q\rho} \right]
\cap (0,1).
\] 
Thus, the center of $E_1^i$ can only be flipped finitely many times by Lemma~\ref{lem:gen-monotonicity}.
Proceeding inductively with the other exceptional divisors $E_j^i$, we conclude that the flipping loci are eventually disjoint from the Zariski closed subset~\eqref{eq:union-centers}.
By truncating the sequence, we may assume that this condition happens for every single flip.
In particular, we will have that 
the log discrepancy
\[
a_{E_j^i}(X_i,\Delta_i)  \in (0,1)
\] 
is independent of $i$.
Hence, we can replace the sequence of flips for $(X,\Delta)$
with a sequence of flips for
its canonicalization $(Y,\Delta_Y)$.
The latter must terminate by~\cite[Theorem 2]{Fuj04}, so the former must terminate as well.
\end{proof}

\section{Examples}
\label{sec:ex}

In this section, we show that all the hypotheses of Conjecture~\ref{conj:boundedness} are actually necessary. 

\begin{example}
{\em  
In this example, we show that the control of $N$
is necessary for Conjecture~\ref{conj:boundedness}.
Let $\sigma_n \subset N_\qq \simeq \qq^2$ 
be the cone spanned by $(0,1)$ and $(n,1)$.
Then, we have that 
\[
X(\sigma_n) \simeq \{(x,y,z)\mid x^2+y^2+z^n=0\}. 
\]
Let $x\in X(\sigma_n)$ be the origin. 
By Proposition~\ref{prop:reg-toric}, we have that 
\[
\pi_1^{\rm reg}(X(\sigma_n)) 
\simeq \pi_1(X(\sigma_n)\setminus \{x\}) \simeq \zz_n. 
\] 
Observe that there are exactly $n-1$ divisors computing the minimal log discrepancy of $(X(\sigma_n);x)$.
By Proposition~\ref{prop:toric-mld}, we know that the minimal log discrepancy of $X(\sigma_n)$ at $x$ equals $1$.
}
\end{example}

\begin{example}
{\em 
In this example, we show that the control of $\epsilon_n$
is necessary in Conjeture~\ref{conj:boundedness}.
Let $\sigma_n \subset N_\qq\simeq \qq^2$
be the cone spanned by $(-1,n)$ and $(1,n)$.
Then, the toric singularity $(X(\sigma_n),x)$ is isomorphic
to the cone over the rational curve of degree $n$. 
There is a unique divisor computing the minimal log discrepancy.
By Proposition~\ref{prop:toric-mld}, we know that
\[
{\rm mld}(X(\sigma_n);x)= \frac{2}{n}.
\]
On the other hand, by Proposition~\ref{prop:reg-toric}, we have that 
\[
\pi_1^{\rm reg}(X(\sigma_n);x) \simeq \pi_1(X(\sigma_n)\setminus \{x\}) \simeq \zz_{2n}.
\] 
}
\end{example}

\begin{example}
{\em 
In this example, we show that the control of $\delta>0$ is necessary in Conjecture~\ref{conj:boundedness}.
This means that it is not only necessary to control the number of divisors computing the minimal log discrepancy,
but we also need to control the number of divisors hitting near the minimal log discrepancy.
Let $\sigma_r \subset N_\qq \simeq \qq^3$ be the cone spanned by the lattice elements $(1,0,0),
(0,1,0)$, and
$(1,1,r)$.
Then $(X(\sigma_r),x)$ is a terminal $3$-fold isolated singularity.
By Proposition~\ref{prop:reg-toric}, we have that 
\[
\pi_1^{\rm reg}(X(\sigma_r),x) \simeq \zz_r. 
\] 
On the other hand, by Proposition~\ref{prop:toric-mld}, there is a unique divisor computing the minimal log discrepancy which equals $1+\frac{1}{r}$.
Furthermore, for each $i\in \{0,\dots,r-1\}$, there is a divisor with log discrepancy $2-\frac{i}{r}$.
Hence, for every $\delta>0$, there are at least 
$\lfloor r\delta\rfloor$ divisors with log discrepancy in the interval
\[
\left[1+\frac{1}{r}, 1+\frac{1}{r}+\delta \right).
\] 
Hence, we need to give an upper bound $N$ for the number of 
divisors with log discrepancy in the interval $[m,m+\delta)$,
and not only for those computing the minimal log discrepancy.
}
\end{example}

\begin{example}
{\em 
Finally, we show that the control in the dimension is necessary.
To do so, we will use the notation of the proof of Proposition~\ref{prop:toric-case}.
We consider the lattice 
\[
\zz^n+\zz\left(\frac{1}{n},\dots ,\frac{1}{n}\right) \subset N_\zz \simeq \zz^n.
\]
Then, the corresponding toric singularity $X(\sigma_n)$ 
is canonical.
The minimal log discrepancy equals $1$ and there are no divisors
with log discrepancy in the interval $(1,2)$.
On the other hand, 
by Proposition~\ref{prop:reg-toric}, we have that 
\[
\pi_1^{\rm reg}(X(\sigma_n);x) \simeq \zz_n.
\] 
Hence, the control on the dimension $n$ is necessary for Conjecture~\ref{conj:boundedness}.
}
\end{example}

\bibliographystyle{habbrv}
\bibliography{bib}

\end{document}